\renewcommand\emptyset{\varnothing}
\theoremstyle{definition}
\newtheorem{dfn}{Definition}
\crefname{dfn}{Definition}{Definitions}
\newtheorem{thm}{Theorem}
\crefname{thm}{Theorem}{Theorems}
\newtheorem{lem}{Lemma}
\crefname{lem}{Lemma}{Lemmas}
\crefname{cor}{Corollary}{Corollaries}
\newtheorem{prop}{Proposition}
\crefname{prop}{Proposition}{Propositions}
\newtheorem{question}{Question}
\crefname{question}{Question}{Questions}
\crefname{mlem}{Main Lemma}{Lemmas}
\crefname{nt}{Note}{Notes}
\crefname{exm}{Example}{Examples}
\crefname{figure}{Figure}{Figures}
\crefname{problem}{Problem}{Problems}
\crefname{equation}{Assertion}{Assertions}
\crefname{item}{}{}
\setlist[enumerate]{align=left}
\newlist{condenum}{enumerate}{3}
\setlist[condenum,1]{label=\textbf{\arabic*.},
                   ref  =\textbf{\arabic*.}}
\setlist[condenum,2]{label=\textbf{(\alph*)},
                   ref  =\themyenumi\textbf{(\alph*)}}
\setlist[condenum,3]{label=\bfseries(\roman*),
                   ref  =\themyenumii\textbf{.(\roman*)}}
\crefname{condenumi}{condition}{conditions}
\crefname{condenumii}{condition}{conditions}
\crefname{condenumiii}{condition}{conditions}
\DeclareMathOperator{\supp}{supp}
\DeclareMathOperator{\cof}{cf}
\DeclareMathOperator{\dom}{dom}
\newcommand{\limo}{\mathrm{Lim}}
\newcommand{\zfc}{\mathrm{ZFC}}
\newcommand{\pair}[1]{( {#1} )}
\newcommand{\seq}[2]{( #1 \mid #2 )}
\newcommand{\mdl}{\mathbb}
\newcommand{\restrict}{{\upharpoonright}}
\newcommand{\Mid}{\mathrel{}\middle|\mathrel{}}
\newcommand{\defset}[2]{\left\{ {#1} \Mid \text{#2} \right\}}
\newcommand{\ch}{\mathrm{CH}}
\newcommand{\ma}{\mathrm{MA}}
\newcommand{\forcestate}[2][]{\Vdash_{#1} \text{``{#2}"}}
\newcommand{\forces}[2][]{\Vdash_{#1} #2}
\renewcommand{\c}{\check}
\renewcommand{\d}{\dot}
\newcommand{\club}{\mathsf{club}}
\newcommand{\stat}{\mathsf{stat}}
\newcommand{\ubdd}{\mathsf{ubdd}}
\newcommand{\height}{\mathrm{ht}}
\newcommand{\finantich}{a}
\newcommand{\specialize}{\mathbb{S}}
\newcommand{\sigc}{\text{$\sigma$-centered}}
\newcommand{\sigl}{\text{$\sigma$-linked}}
\newcommand{\knas}{\mathrm{(K)}}
\newcommand{\xknas}[1]{{#1}\text{-(K)}}
\newcommand{\xpc}[1]{{#1}\text{-pc}}
\definecolor{reasontext}{rgb}{0,0,0}
\definecolor{reasonbg}{rgb}{0.9,0.9,0.9}
\title{Uniformization of ladder system colorings and stationary precaliber forcings}
\author{Yushiro Aoki\thanks{National Institute of Technology, Tokyo College Email: \protect\url{y_aoki@tokyo-ct.ac.jp}}}
\begin{document}
	\maketitle
	\sloppy
\begin{abstract}
	We investigate the relationship between variants of the uniformization property for ladder system colorings and fragments of Martin's Axiom. 
	The well-known forcing properties of having precaliber $\aleph_1$ and being $\sigma$-centered correspond to uncountable refinement 
	and countable decomposition into centered subsets, respectively, and the associated forcing axioms have been widely studied. 
	In this paper, we focus on a forcing axiom for the property corresponding to stationary refinement, namely the stationary precaliber $\aleph_1$ property. 
	Analogously, we observe that ladder system coloring uniformization also admits both stationary refinement and countable decomposition variants. We discuss the interaction between these uniformization properties and various forcing axioms. 
	Through this analysis, we obtain as a main result the separation between the forcing axioms for stationary precaliber $\aleph_1$ and for $\sigma$-linked posets.
\end{abstract}

\section{Introduction}
The study of separating fragments of Martin's Axiom has been a long-standing topic in set theory (e.g., see \cite{MR3418888}). 
The most typical fragments are forcing axioms of the form $\mathrm{MA}(\Phi)$, where $\Phi$ is a chain condition stronger than the ccc. 
A well-studied example of such a property is precaliber $\aleph_1$, which asserts that every uncountable set of conditions contains an uncountable centered subset. In other words, this property can be viewed as an uncountable refinement into centered sets. 
While the forcing axiom for precaliber $\aleph_1$ implies many consequences, here we focus on its implications for the (full) uniformization of ladder system colorings.

Ladder system coloring uniformization was introduced by Devlin and Shelah \cite{devlin-shelah} as a set-theoretic reformulation of the Whitehead problem. 
Specifically, the existence of a ladder system such that every coloring on a stationary set is uniformizable is equivalent to the existence of a non-free Whitehead group with $\aleph_1$-generators. 
Uniformization also appears as a consequence of Ramsey-type fragments of Martin's Axiom, which were introduced and studied by Todorčević \cite{Todorcevic1989PartitionPI}. 
Todor\v{c}evi\'{c}-Veli\v{c}kovi\'{c} showed that $\mathcal{K}'_4$ implies that every ladder system coloring is uniformizable on some club set (see \cite{yorioka2019todorvcevic}). 
Furthermore, Moore-Yorioka \cite{YoriokaMoore} proved that $\mathcal{K}'_2$ implies $\sigma\mathsf{U}$, that is, for every ladder system coloring, there exists a countable partition of $\omega_1$ such that the coloring is uniformizable on each piece.

Thus, combinatorial principles on $\omega_1$ often lead to partial uniformization properties, which are strictly weaker than full uniformization. 
On the other hand, the forcing axiom corresponding to precaliber~$\aleph_1$ implies full uniformization. 
To investigate the finer relationship between uniformization properties and forcing properties, it is necessary to introduce stronger forcing conditions.

From this viewpoint, we propose new variants of forcing properties and uniformizations. 
We consider properties stating that any sequence of conditions indexed by a stationary set has a centered (or linked) subsequence still indexed by a stationary set. 
These properties are referred to as stationary precaliber $\aleph_1$ and stationary Knaster, respectively.
Such notions have already been studied by Krueger in the context of forcing axioms for non-special Aronszajn trees \cite{KRUEGER2020102820}. 
In what follows, we use the notations $\xpc{\stat}$ and $\xknas{\stat}$ to denote these properties.

We also consider localized versions $\xpc{\stat_E}$ and $\xknas{\stat_E}$, where the stationary refinement is restricted to a fixed stationary set $E$. 
The trivial implications among forcing axioms for these properties that follow from their definitions are illustrated in \cref{fig:ma}.

\begin{figure}
	\centering
	\caption{Trivial implications of fragments of Martins axiom. $E_0 \cup E_1 = \omega_1$ are stationary sets.}
  \label{fig:ma}
\begin{tikzcd}[ampersand replacement=\&,row sep=.5cm,column sep=.2cm]
	\& 
		\& \ma_{\aleph_1}((\mathrm{K})) \ar[dll]\ar[d]\ar[dr]
			\& 
\\
\ma_{\aleph_1}(\text{PC}_{\aleph_1}) \ar[d]\ar[dr]
	\& 
		\& \ma_{\aleph_1}(\xknas{\stat_{E_0}}) \ar[dll]\ar[rd] \ar[dll]
			\& \ma_{\aleph_1}(\xknas{\stat_{E_1}})\ar[d]\ar[dll]
\\
 \ma_{\aleph_1}(\text{$\stat_{E_0}$-pc})\ar[rd] 
	\& \ma_{\aleph_1}(\text{$\stat_{E_1}$-pc})\ar[d]
		\& 
			\& \ma_{\aleph_1}(\xknas{\stat} )\ar[dll]\ar[d]
\\
	\& \ma_{\aleph_1}(\text{$\stat$-pc}) \ar[d]
		\& 
			\& \ma_{\aleph_1}(\sigl) \ar[dll]
\\
	\& \ma_{\aleph_1}(\sigc)
		\&
			\& 
\end{tikzcd}
\end{figure}
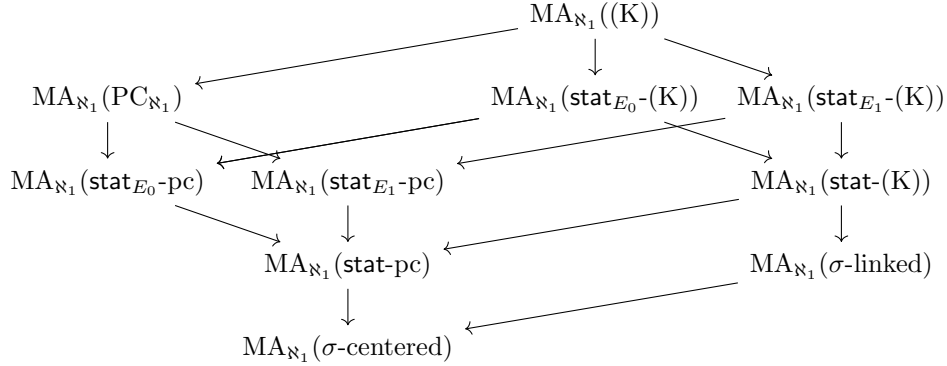
By analyzing the interaction between these forcing properties and variants of uniformization, we obtain separation results both among variants of uniformization principles and among fragments of forcing axioms.
\begin{enumerate}[label=(MA\arabic*)]
\item For a fixed partition $E_0 \cup E_1 = \omega_1$ into stationary sets, $\ma_{\aleph_1}(\xknas{\stat_{E_0}})$ does not imply $\ma_{\aleph_1} (\xpc{\stat_{E_1}})$ (\cref{thm:local sep}).
\item $\ma_{\aleph_1}(\xknas{\stat})$ and $\forall E \subset \omega_1 \text{:costationary},\,\lnot\ma_{\aleph_1}(\xpc{\stat_E})$ is consistent (\cref{thm:global sep}).
\item $\ma_{\aleph_1}(\sigl)$ does not imply $\ma_{\aleph_1}(\xpc{\stat})$ (\cref{thm:distinguish sigl and statpc}).
\end{enumerate}
 
For uniformizations,
\begin{enumerate}[label=(U\arabic*)]
	\item All ladder system colorings are uniformized on a fixed stationary set $E \subset \omega_1$ but a ladder system coloring is not uniformized on any stationary $E' \subset \omega_1 \setminus E$ (\cref{thm:local sep}).
	\item For every stationary set $E$, all ladder system colorings are uniformized on a stationary subset $E' \subset E$ but a ladder system coloring is not $\sigma$-uniformized, that is, for every decomposition $\bigcup_{n < \omega}A_n = \omega_1$, there exists $n$ such that the ladder system coloring is not uniformizable on $A_n$.
	\begin{itemize}
	\item More precisely, a ladder system coloring is not $\sigma$-$T$-uniformized for a fixed specail Aronszajn tree $T$.
	\end{itemize}
\end{enumerate}
Moreover, we see that $\ma_{\aleph_1}(\xknas{\stat})$ does not imply that any ladder system coloring is uniformized on some stationary set (\cref{thm:global sep}).
% We will present finer results as the following (in this paper, ``$A$ does not imply $B$" means ``$\zfc + A$ does not prove $B$" and of course we assume the consistency of $\zfc$):
% \begin{itemize}
% \item For every co-stationary set $E$, $\ma_{\aleph_1}(\xknas{\stat_E})$ does not imply $\ma_{\aleph_1}(\pc)$.
% \item For a stationary partition $E_0 \cup E_1 = \omega_1 \cap \limo$, $\ma_{\aleph_1}(\xpc{\stat_{E_0}})$ and $\ma_{\aleph_1}(\xknas{\stat_{E_0}})$ do not imply $\ma_{\aleph_1}(\xpc{\stat_{E_1}})$ and $\ma_{\aleph_1}(\xknas{\stat_{E_1}})$, respectively.
% \item $\ma_{\aleph_1}(\xknas{\stat})$ does not imply any Martin's axiom of form $\ma_{\aleph_1}(\xpc{\stat_I})$ for a co-stationary  $I$.
% \item $\ma_{\aleph_1}(\sigl)$ does not imply $\ma_{\aleph_1}(\xpc{\stat})$.
% \end{itemize}
% Except for the last one, all results are conclusions of investigation of ladder system coloring uniformizations.
% The last one (\cref{x}) follows from results of specializing condition of Aronszajn trees:
% \begin{itemize}
% \item  $\ma_{\aleph_1}(\xpc{\stat})$ implies the restriction of EATS (every Aronszajn tree is specialized) for Aronszajn trees with the stationary antichain refining property (\cref{x}).
% \item $\sigl$ posets preserve non-speciality and stationary antichain refining property for Aronszajn trees (\cref{x}).
% \end{itemize}

\section{Stationary precaliber properties}
We consider generalizations of well-known chain conditions.
\begin{dfn}
Let $\mathcal{A}$ be a set of subsets of $\omega_1$. 
$\mdl{P}$ is an \emph{$\mathcal{A}$-pc poset ($\mathcal{A}$-$n$-(K) poset)} if every sequence  of which index set is in $\mathcal{A}$ contains a centered ($n$-linked) subsequence of which index set is in $\mathcal{A}$.
An $\mathcal{A}$-(K) poset is a $\mathcal{A}$-$2$-(K) poset.
\end{dfn}
Note that properties pre-caliber $\aleph_1$ and $n$-Knaster are equivalent to $\ubdd$-pc and $\ubdd$-$n$-Knaster where $\ubdd$ denotes the family of unbounded subsets of $\omega_1$. 
Other families we consider is the following.

\begin{dfn}
For any stationary set $X$, let 
\begin{align*}
\stat_X &:= \{E \subset X \mid \text{$E$ is a stationary set in $\omega_1$} \}\\ 
\stat &:= \stat_{\omega_1}\\
\club_X &:= \{C \cap X \mid \text{$C$ is a club set in $\omega_1$} \}\\ 
\club &:= \club_{\omega_1}
\end{align*}
\end{dfn}

\begin{lem}\label{lem:statpc pres}
Let $E \in \stat$.
Every finite support iteration of $\stat_E$-pc posets is also $\stat_E$-pc.
\end{lem}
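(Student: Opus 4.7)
The plan is to induct on the length $\delta$ of the iteration, after first observing that every $\stat_E$-pc poset is $\ccc$: any uncountable antichain, restricted to any stationary subset of $E$, would admit no centred subsequence of size $\geq 2$. Consequently every $\mathbb{P}_\alpha$ in the iteration is $\ccc$, so stationary subsets of $\omega_1$ are preserved throughout, a fact used freely below.

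For the limit stage I exploit the finite supports. When $\cof(\delta)=\omega$, fix $\gamma_n\uparrow\delta$ and apply Fodor's lemma to $n(\xi):=\min\{n:\supp(p_\xi)\subseteq\gamma_n\}$ to reduce to the inductive hypothesis on some $\mathbb{P}_{\gamma_n}$. When $\cof(\delta)>\omega$, the $\Delta$-system lemma applied to the supports yields a stationary $S_1\subseteq S$ with $\{\supp(p_\xi):\xi\in S_1\}$ forming a $\Delta$-system rooted in some $\rho\subseteq\gamma<\delta$; applying the IH to $\mathbb{P}_\gamma$ on $\langle p_\xi\restrict\gamma:\xi\in S_1\rangle$ and then gluing via the pairwise-disjoint tails $p_\xi\restrict[\gamma,\delta)$ gives the desired stationary centred refinement in $\mathbb{P}_\delta$.

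For the successor stage $\delta=\beta+1$, write $p_\xi=(q_\xi,\dot r_\xi)$ and plan three refinements: (i) the IH applied to $\mathbb{P}_\beta$ thins $S$ to a stationary $S_1$ with $\{q_\xi:\xi\in S_1\}$ centred; (ii) in $V^{\mathbb{P}_\beta}$, where $S_1$ remains stationary by $\ccc$, apply $\stat_E$-pc of $\dot{\mathbb{Q}}_\beta$ to $\langle\dot r_\xi:\xi\in S_1\rangle$, producing a $\mathbb{P}_\beta$-name $\dot S_2$ forced to be a stationary subset of $S_1$ with $\{\dot r_\xi:\xi\in\dot S_2\}$ centred; (iii) set $T:=\{\xi\in S_1:\exists q\leq q_\xi,\,q\Vdash\xi\in\dot S_2\}$, pick $q_\xi^*\leq q_\xi$ with $q_\xi^*\Vdash\xi\in\dot S_2$ for each $\xi\in T$, and re-apply the IH to $\mathbb{P}_\beta$ on $\langle q_\xi^*:\xi\in T\rangle$ to obtain a stationary $T^*\subseteq T$ with $\{q_\xi^*:\xi\in T^*\}$ centred. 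For any finite $F\subseteq T^*$, a common lower bound $q_F^*$ of $\{q_\xi^*:\xi\in F\}$ then forces $F\subseteq\dot S_2$, hence $\{\dot r_\xi:\xi\in F\}$ admits a common lower bound $\dot s_F$ in $\dot{\mathbb{Q}}_\beta$, and $(q_F^*,\dot s_F)$ witnesses compatibility of $\{p_\xi:\xi\in F\}$ in $\mathbb{P}_\delta$.

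The hard step is (iii), namely showing that $T$ is stationary. My approach is by contradiction: if some club $C\subseteq\omega_1$ were disjoint from $T$, then $q_\xi\Vdash\xi\notin\dot S_2$ for every $\xi\in S_1\cap C$, whence $\dot S_2\cap C$ would be forced to lie in $\{\xi\in S_1\cap C:q_\xi\notin\dot G\}$ despite being forced stationary; the contradiction is then to be extracted from the centredness of $\{q_\xi:\xi\in S_1\cap C\}$, the $\ccc$-absoluteness of stationarity, and the forced stationarity of $\dot S_2\cap C$, most likely by arranging $\dot S_2$ to satisfy a maximality condition (e.g.\ choosing it by Zorn in $V^{\mathbb{P}_\beta}$) that forces its ground-model trace to be stationary.
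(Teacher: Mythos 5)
Your overall architecture (induction on the length, a Fodor/$\Delta$-system reduction at limits, a three-stage refinement at successors) matches the paper's, and the limit case is essentially fine once one notes that for $\cof(\delta)=\omega_1$ you need the \emph{stationary} (pressing-down) form of the $\Delta$-system lemma, which the paper obtains by injecting the union of the supports into $\omega_1$ and applying Fodor. The genuine gap is in the successor step, and it sits exactly where you flag it: with your setup, the stationarity of $T$ is not just hard to prove --- it can fail. If you apply the $\stat_E$-pc property of $\dot{\mathbb{Q}}_\beta$ to the \emph{whole} sequence $\langle \dot r_\xi : \xi \in S_1\rangle$ in $V^{\mathbb{P}_\beta}$, nothing prevents the resulting name $\dot S_2$ from being (forced to be) a stationary subset of $\{\xi \in S_1 : q_\xi \notin \dot G\}$; then any $q$ forcing $\xi \in \dot S_2$ is incompatible with $q_\xi$, and $T$ can be empty. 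Your contradiction sketch only produces a condition forcing some $\xi\in S_1\cap C$ into $\dot S_2$ while $q_\xi$ forces it out, i.e.\ an incompatibility rather than a contradiction; and a ``maximal'' or Zorn-style choice of $\dot S_2$ does not help, because maximality of a centred stationary refinement says nothing about compatibility with the particular ground-model conditions $q_\xi$.

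The missing idea --- which is the paper's --- is to restrict \emph{before} applying the pc property in the extension. Since $\mathbb{P}_\beta$ is ccc, some $p^*$ forces $\dot T_0:=\{\xi\in S_1 : q_\xi\in\dot G\}$ to be a stationary subset of $S_1$; one then applies the $\stat_E$-pc property of $\dot{\mathbb{Q}}_\beta$ to $\langle \dot r_\xi : \xi\in \dot T_0\rangle$, obtaining $\dot S_2$ forced below $p^*$ to be a stationary subset of $\dot T_0$ with $\{\dot r_\xi : \xi\in\dot S_2\}$ centred. Now any $q\le p^*$ with $q\Vdash\xi\in\dot S_2$ automatically forces $q_\xi\in\dot G$, hence is compatible with $q_\xi$ and extends to some $q'\le q,q_\xi$ still forcing $\xi\in\dot S_2$; your club-avoidance argument then goes through verbatim and shows $T$ is stationary. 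The remainder of your successor step (choosing the $q^*_\xi$, re-applying the induction hypothesis to get a stationary $T^*$ with $\{q^*_\xi\}$ centred, and gluing via $(q^*_F,\dot s_F)$) is correct and coincides with the paper's argument; your preliminary thinning in step (i) is harmless but unnecessary.
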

\begin{proof}
We first check the successor steps.
Let $\mdl{P}*\d{\mdl{Q}}$ be a two step iteration of $\stat_E$-precaliber posets.
Let $\seq{\pair{p_\alpha, \d{q}_\alpha}}{\alpha \in S}$ be a $S$-sequence in $\mdl{P} * \d{\mdl{Q}}$ where $S \in \stat_E$.
It is easy to see that some $p^* \in \mdl{P}$ forces $\d{T} := \{\alpha \mid p_\alpha \in \d{G}_{\mdl{P}} \} \in \stat_S$ since $\mdl{P}$ is ccc.
% \begin{reason}
% Suppose not, $\forces[\mdl{P}]{\{\alpha \mid p_\alpha \notin \d{G}_{\mdl{P}} \} \in \club_S}$.
% Since $\mdl{P}$ is ccc, there exists $C \in \club_S$ such that $\forces[\mdl{P}]{\forall \alpha \in \c{C},\,p_\alpha \notin \d{G}_{\mdl{P}}}$, a contradiction.
% \end{reason}
Let \[p^*\forcestate{$\d{T}_0 \in \stat_{\d{T}}$ and $\seq{\d{q}_\alpha}{\alpha \in \d{T}_0}$ is centered}\]
and let $T_1 = \{\alpha \in S \mid \exists p \leq p^*,\, p \leq p_\alpha \land p \forces{\c{\alpha} \in \d{T}_0} \}$.
Then $T_1 \in \stat_S$.
To see this, towards a contradiction, suppose that there exists $C \in \club$ such that $C \cap T_1 = \emptyset$.
Then, $p^* \forces{C \cap \d{T}_0 \neq \emptyset}$.
Pick $p \leq p^*$ and $\alpha \in C$ such that $p \forces{\alpha \in \d{T}_0} \subset \d{T}$.
Thus $p \forces{p_\alpha \in \d{G}_{\mdl{P}}}$ and hence $\alpha \in T_1$, it is a contradiction.

Select a sequence $\seq{p^+_\alpha \leq p_\alpha}{\alpha \in T_1}$ such that $p^+_\alpha \forces{\c{\alpha}\in\d{T}_0}$.
Pick $T_2 \in \stat_{T_1}$ such that $\seq{p^+_\alpha}{\alpha \in T_2}$ is centered.
Then $\seq{\pair{p_\alpha, \d{q}_\alpha}}{\alpha \in T_2}$ is centered.
Indeed, if $F \subset T_2$ is finite, 
then there exists $r \in \mdl{P}$ such that $\forall \alpha \in F,\,r \leq p^+_\alpha$.
Thus we have $r \forces{\c{F} \subset \d{T}_0}$ and hence $r \forces{\exists \d{q} \in \d{\mdl{Q}},\, \forall \alpha \in \c{F},\, \d{q} \leq \d{q}_\alpha}$.
A common extension of $\pair{p_\alpha, \d{q}_\alpha}$ $(\alpha \in F )$ exists.

We turn into the limit step.
Let $\gamma$ be a limit ordinal and let $\mdl{P}_\gamma = \seq{\mdl{P}_\xi, \d{\mdl{Q}}_\xi}{\xi < \gamma}$ be a fsi of $\stat_E$-precaliber posets.
We assume that every $\mdl{P}_\xi$ ($\xi < \gamma$) is an $\stat_E$-precaliber poset.
Fix an $S$-sequence $\seq{p_\alpha \in \mdl{P}_\gamma}{\alpha \in S}$ for some $S \in \stat_E$.
Fix an injection $\iota \colon \bigcup\{\supp p_\alpha \mid \alpha \in S\} \to \omega_1$ and let $a_\alpha = \iota^\to(\supp p_\alpha)$.
Using Fodor's pressing down lemma, pick $S_0 \in \stat_S$ and $n < \omega$ such that $\forall \alpha \in S_0,\,n = |a_\alpha \cap \alpha|$.
Again, using Fodor's pressing down lemma $n$ times,  pick $S_1 \in \stat_{S_0}$ and $a \in [\omega_1]^n$ such that $\forall \alpha \in S_1,\, a = a_\alpha \cap \alpha$.
Then $C = \{\delta \in \omega_1 \mid \forall \alpha \in \delta \cap S_1,\,a_\alpha \subset \delta \}$ is a club set.
Let $S_2 = S_1 \cap C \in \stat_E$.
Let $b = \iota^{\gets}(a)$.
Then, by the induction hypothesis, $\seq{p_\alpha \restrict b}{\alpha \in S_2}$ has a centered subsequence of which has the index set $T \in \stat_E$.
Then, $\seq{p_\alpha}{\alpha \in T}$ is centered.
\end{proof}
This proof also works for $\stat_E$-$n$-(K) posets too.
\begin{lem}
For $E \in \stat$ and $n < \omega$, every fsi of $\stat_E$-$n$-(K) posets is also $\stat_E$-$n$-(K)
\end{lem}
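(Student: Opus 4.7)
The plan is to mimic the proof of \cref{lem:statpc pres} essentially verbatim, replacing the word ``centered'' by ``$n$-linked'' throughout, and paying attention to the one quantitative point where the value of $n$ matters. I would proceed by induction on the iteration length $\gamma$.

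For the successor step, suppose $\mdl{P} * \d{\mdl{Q}}$ is a two-step iteration of $\stat_E$-$n$-(K) posets and $\seq{\pair{p_\alpha, \d{q}_\alpha}}{\alpha \in S}$ is given with $S \in \stat_E$. First, because $\mdl{P}$ is ccc, some $p^* \in \mdl{P}$ forces $\d{T} := \{\alpha : p_\alpha \in \d{G}_{\mdl{P}}\} \in \stat_S$. Strengthening $p^*$ if necessary, and using that $\d{\mdl{Q}}$ is forced to be $\stat_E$-$n$-(K), we may take $p^* \forcestate{$\d{T}_0 \in \stat_{\d{T}}$ and $\seq{\d{q}_\alpha}{\alpha \in \d{T}_0}$ is $n$-linked}$. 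Define $T_1 = \{\alpha \in S : \exists p \leq p^*,\, p \leq p_\alpha \land p \forces \c{\alpha} \in \d{T}_0\}$; the same Fodor-style argument as in \cref{lem:statpc pres} shows $T_1 \in \stat_S$. Pick $p^+_\alpha \leq p_\alpha$ with $p^+_\alpha \forces \c{\alpha} \in \d{T}_0$, and use the $\stat_E$-$n$-(K) property of $\mdl{P}$ itself to extract $T_2 \in \stat_{T_1}$ on which $\seq{p^+_\alpha}{\alpha \in T_2}$ is $n$-linked.

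To verify that $\seq{\pair{p_\alpha,\d{q}_\alpha}}{\alpha \in T_2}$ is $n$-linked, fix any $F \subset T_2$ with $|F| \leq n$. By $n$-linkedness of the $p^+_\alpha$'s there is $r$ with $r \leq p^+_\alpha$ for all $\alpha \in F$; then $r \forces \c{F} \subset \d{T}_0$, and since $|F| \leq n$, the forced $n$-linkedness of $\seq{\d{q}_\alpha}{\alpha \in \d{T}_0}$ gives $r \forces \exists \d{q},\, \forall \alpha \in \c{F},\, \d{q} \leq \d{q}_\alpha$. This produces the desired common extension. The cardinality bound $|F| \leq n$ is exactly what lets us conclude $n$-linkedness from $n$-linkedness at both levels; this is the only structural difference from the centered case.

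For the limit step, let $\mdl{P}_\gamma$ be a fsi of $\stat_E$-$n$-(K) posets with all proper initial segments known to be $\stat_E$-$n$-(K), and let $\seq{p_\alpha}{\alpha \in S}$ be given. Apply the same supports-and-Fodor argument as in \cref{lem:statpc pres}: injectively code $\bigcup_{\alpha \in S}\supp p_\alpha$ into $\omega_1$, pressing down to stabilize the size and value of the initial segment $a_\alpha \cap \alpha$, and intersect with the club of closure points to obtain $S_2 \in \stat_E$ and a fixed finite $b \subset \gamma$ such that for $\alpha, \beta \in S_2$ the supports interact only within $b$. The induction hypothesis applied to $\seq{p_\alpha \restrict b}{\alpha \in S_2}$ (inside some $\mdl{P}_\xi$ with $\xi < \gamma$) then gives $T \in \stat_E$ on which the restricted sequence is $n$-linked, whence $\seq{p_\alpha}{\alpha \in T}$ is $n$-linked in $\mdl{P}_\gamma$. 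No new obstacle arises here; the argument is entirely parallel to the centered case, and I expect the only place requiring genuine attention is the $|F| \leq n$ bookkeeping in the successor step above.
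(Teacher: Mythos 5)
Your proposal is correct and matches the paper exactly: the paper gives no separate proof, remarking only that the proof of \cref{lem:statpc pres} ``also works for $\stat_E$-$n$-(K) posets,'' which is precisely the verbatim adaptation you carry out. Your identification of the $|F| \leq n$ bookkeeping in the successor step as the only point where the argument genuinely uses $n$-linkedness rather than centeredness is accurate.
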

\begin{lem}
Let $\kappa$ be an uncountable cardinal.
The forcing axiom for $\xpc{\stat_E}$ ($\xknas{\stat_E}$) posets of size $\leq \kappa$ implies $\ma_{\kappa}(\xpc{\stat_E})$ ($\ma_{\kappa}(\xknas{\stat_E})$).
\end{lem}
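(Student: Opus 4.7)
My plan is to adapt the standard elementary submodel argument used to reduce Martin's axiom for all ccc posets to the version for ccc posets of bounded size. Fix an $\xpc{\stat_E}$ poset $\mdl{P}$ together with a family $\seq{D_\alpha}{\alpha < \kappa}$ of dense subsets. Choose a sufficiently large regular cardinal $\theta$ and an elementary submodel $M \prec H_\theta$ of cardinality $\kappa$ with $\kappa + 1 \subseteq M$ and $\mdl{P}, \seq{D_\alpha}{\alpha < \kappa} \in M$. Set $\mdl{P}' := \mdl{P} \cap M$, equipped with the order inherited from $\mdl{P}$.

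I would first check that $\mdl{P}'$ still has property $\xpc{\stat_E}$ and has cardinality at most $\kappa$. The cardinality bound is immediate since $|M| = \kappa$. For the chain condition, let $\seq{p_\alpha}{\alpha \in S}$ be a sequence in $\mdl{P}'$ with $S \in \stat_E$. Viewing it as a sequence in $\mdl{P}$ and applying the hypothesis on $\mdl{P}$, we obtain $T \in \stat_E$ with $T \subseteq S$ such that $\seq{p_\alpha}{\alpha \in T}$ is centered in $\mdl{P}$. To upgrade centeredness to $\mdl{P}'$, fix any finite $F \subseteq T$. Each $p_\alpha$ with $\alpha \in F$ lies in $M$, so the statement $\exists r \in \mdl{P}\,\forall \alpha \in F\,(r \leq p_\alpha)$ has parameters in $M$; by elementarity a witness $r \in \mdl{P} \cap M = \mdl{P}'$ exists. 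Hence $\seq{p_\alpha}{\alpha \in T}$ is centered in $\mdl{P}'$ as well. The very same elementarity argument shows that $D_\alpha \cap \mdl{P}'$ is dense in $\mdl{P}'$ for every $\alpha < \kappa$, and the analogous argument for $n$-linkedness in place of full finite compatibility handles the $\xknas{\stat_E}$ case.

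Finally, I would apply the hypothesized forcing axiom for $\xpc{\stat_E}$ posets of size $\leq \kappa$ to $\mdl{P}'$ against the $\kappa$-many dense sets $D_\alpha \cap \mdl{P}'$, obtaining a filter $G' \subseteq \mdl{P}'$ meeting each of them, and then let $G$ be the upward closure of $G'$ inside $\mdl{P}$. Using directedness of $G'$ together with the obvious fact that compatibility in $\mdl{P}'$ implies compatibility in $\mdl{P}$, $G$ is a filter on $\mdl{P}$, and $G \cap D_\alpha \supseteq G' \cap D_\alpha \neq \emptyset$ for every $\alpha < \kappa$. I do not foresee a genuine obstacle; the only point that merits attention is that the property $\xpc{\stat_E}$ quantifies over $\omega_1$-length sequences which need not themselves belong to $M$, so one must verify centeredness on $\mdl{P}'$ by applying elementarity only to the finitely many parameters required to witness each common lower bound, rather than by attempting to reflect the entire sequence into $M$.
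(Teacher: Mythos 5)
Your proposal is correct and follows essentially the same route as the paper: pass to $\mdl{P}\cap M$ for an elementary submodel $M\prec H_\theta$ of size $\kappa$, verify that the chain condition and the density of the $D_\alpha\cap M$ transfer by applying elementarity to the finitely many parameters witnessing each common lower bound, and then upward-close the resulting filter. Your closing remark about not needing the whole $\omega_1$-sequence to lie in $M$ is exactly the point the paper's proof relies on as well.
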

\begin{proof}
	We may assume that the forcing axiom for $\xpc{\stat_E}$ ($\xknas{\stat_E}$) posets of size $\leq \kappa$.
	Let $\mdl{P}$ be a $\xpc{\stat_E}$ forcing notion.
	Let $\mathcal{D} = \seq{D_\alpha}{\alpha < \kappa}$ be a sequence of dense subsets of $\mdl{P}$.
	Let $\theta$ be a large enough regular cardinal and $M$ be an elementary submodel of $H_\theta$ of size $\kappa$ that contains $\kappa$ and enough elements to argue about $\mdl{P}$ and   $\mathcal{D}$.
	Then the poset $\mdl{P} \cap M$ of which order is induced by the one of $\mdl{P}$ is $\xpc{\stat_E}$.
	Indeed, if $\bar{p} = \seq{p_\alpha \in \mdl{P} \cap M}{\alpha \in E'}$ is a sequence of conditions in $M$ where $E' \in \stat_E$, then there exists $E'' \in \stat_{E'}$ such that $\bar{p} \restrict E''$ is centered in $\mdl{P}$.
 By elementarity, it is easily seen that $\bar{p} \restrict E''$ is centered in $\mdl{P} \cap M$, so is that each $D_\alpha \cap M$ is dense in $\mdl{P} \cap M$.
	Thus, by the assumption, there exists a $\seq{D_\alpha \cap M}{\alpha <\kappa}$-generic filter $G$.
	Let $\tilde{G} = \{p \in \mdl{P} \mid \exists q \in G,\, q \leq p\}$.
	Then $\tilde{G}$ is a $\mathcal{D}$-generic filter.
\end{proof}
For simplicity, we consider the following weakening of $\sigma$-linked and $\sigma$-centered, namely $\aleph_1$-$\sigma$-linked and $\aleph_1$-$\sigma$-centered, respectively.
\begin{dfn}
For a cardinal $\kappa$, $\mdl{P}$ is \emph{$\kappa$-$\sigl$ ($\kappa$-$\sigc$)} if every sequence $\seq{p_\alpha \in \mdl{P}}{\alpha < \kappa}$ of conditions indexed by $\kappa$ is $\sigl$ ($\sigc$) in $\mdl{P}$.
\end{dfn}
Obviously, $\mdl{P}$ is $\sigl$ if and only if $|\mdl{P}|$-$\sigl$.
Furtheremore, the forcing $\mdl{C}_\kappa$ adding $\kappa$ Cohen reals is $\aleph_1$-$\sigl$ since $\mdl{C}_{\omega_1}$ is $\aleph_1$-$\sigl$.
Thus some $\aleph_1$-$\sigl$ forcing grows the continuum.
It is obvious from the pressing down lemma that the property $\aleph_1$-$\sigl$ ($\aleph_1$-$\sigc$) is stronger than $\stat$-Knaster ($\xpc{\stat}$, respectively).
\begin{lem}
The properties $\aleph_1$-$\sigl$ and $\aleph_1$-$\sigc$ are closed under finite support iteration.
\end{lem}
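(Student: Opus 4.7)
The plan is to prove the lemma by induction on the length of the iteration, paralleling \cref{lem:statpc pres} but producing a countable centered (resp.\ linked) partition in place of a single stationary refinement.

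For the successor step, let $\mdl{P}*\d{\mdl{Q}}$ be a two-step iteration with $\mdl{P}$ and $\d{\mdl{Q}}$ (forced to be) $\aleph_1$-$\sigc$, and let $\seq{(p_\alpha,\d{q}_\alpha)}{\alpha<\omega_1}$ be an $\omega_1$-sequence in it. Using that $\d{\mdl{Q}}$ is forced $\aleph_1$-$\sigc$, I obtain a $\mdl{P}$-name $\d{f}\colon\omega_1\to\omega$ which $\mdl{P}$ forces to witness the decomposition of $\seq{\d{q}_\alpha}{\alpha<\omega_1}$ into countably many centered pieces in $\d{\mdl{Q}}$. For each $\alpha$ I pick $p'_\alpha\leq p_\alpha$ and $n_\alpha<\omega$ with $p'_\alpha\Vdash\d{f}(\alpha)=n_\alpha$. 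Applying $\aleph_1$-$\sigc$ of $\mdl{P}$ to $\seq{p'_\alpha}{\alpha<\omega_1}$ yields a partition $\omega_1=\bigcup_m A_m$ with each $\{p'_\alpha:\alpha\in A_m\}$ centered, and the refinement $\{A_m\cap\{\alpha:n_\alpha=k\}\}_{m,k<\omega}$ witnesses $\aleph_1$-$\sigc$ of $\mdl{P}*\d{\mdl{Q}}$ on the original sequence: a common lower bound $r$ of the $p'_\alpha$'s on one piece forces the corresponding $\d{q}_\alpha$'s into one $\d{f}$-class and hence to admit a common name $\d{s}$, giving $(r,\d{s})\leq(p_\alpha,\d{q}_\alpha)$. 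The $\aleph_1$-$\sigl$ case is identical with $2$-linked in place of centered.

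For the limit step at $\gamma$ I split on $\cof(\gamma)$. If $\cof(\gamma)>\omega_1$, then $\bigcup_\alpha\supp p_\alpha$ is bounded below $\gamma$ and induction applies at an earlier stage. If $\cof(\gamma)=\omega$, fix cofinal $\seq{\gamma_n}{n<\omega}$ and partition $\omega_1$ by the least $n$ with $\supp p_\alpha\subset\gamma_n$; each piece lies in some $\mdl{P}_{\gamma_n}$ and induction yields a countable centered refinement there. The remaining case $\cof(\gamma)=\omega_1$ is handled by importing the Fodor--$\Delta$-system analysis of \cref{lem:statpc pres}: fix an injection $\iota\colon\bigcup_\alpha\supp p_\alpha\to\omega_1$, set $a_\alpha=\iota^\to(\supp p_\alpha)$, and by pressing-down extract on a stationary set a common root $\rho$ with pairwise disjoint complements; the root part $\seq{p_\alpha\restrict\iota^\gets(\rho)}{\alpha<\omega_1}$ lies in some $\mdl{P}_\xi$ with $\xi<\gamma$, and induction there yields a countable centered refinement which lifts to the original conditions via the $\Delta$-system structure.

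The main obstacle is precisely this $\cof(\gamma)=\omega_1$ subcase: \cref{lem:statpc pres} extracts only a single stationary refinement, whereas here a countable partition of all of $\omega_1$ is required. I plan to first isolate those supports occurring uncountably often (each such fiber reduces to a fixed $\mdl{P}_\xi$ handled by induction), then iterate the Fodor--$\Delta$-system extraction on the residual whose supports are used only countably often, combining the residual indices into countably many $\Delta$-batches via a diagonal enumeration along a club witnessing the non-stationarity of the leftover.
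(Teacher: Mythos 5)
Your successor step and your treatment of the limit cases $\cof(\gamma)=\omega$ and $\cof(\gamma)>\omega_1$ are correct and essentially identical to the paper's argument. The problem is the case $\cof(\gamma)=\omega_1$, which you rightly identify as the crux but do not actually solve: what you offer there is a plan rather than a proof, and the plan rests on a combinatorial goal that is false in general. You propose to partition the index set $\omega_1$ into countably many ``$\Delta$-batches'' (after setting aside coordinates occurring in uncountably many supports). But an $\omega_1$-sequence of finite subsets of $\omega_1$ need not admit any partition into countably many $\Delta$-systems. For instance, in a length-$\omega_1$ finite support iteration of Cohen forcing take conditions $p_\alpha$ with $\supp(p_\alpha)=\{x_{i(\alpha)}\}$, where $i\colon\omega_1\to\omega_1$ has uncountably many fibers, each uncountable, and the $x_i$ are pairwise distinct. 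A $\Delta$-system of singletons is either constant (hence contained in one fiber) or consists of pairwise distinct singletons (hence meets each fiber at most once); countably many pieces of the first kind cover only countably many fibers, and countably many pieces of the second kind meet each fiber in a countable set, so no countable partition into $\Delta$-systems exists. Your preliminary reduction does not rescue this: in the example every coordinate occurs uncountably often, yet there are uncountably many fibers, so ``handling each fiber by induction inside a fixed $\mdl{P}_\xi$'' produces uncountably many pieces, and Fodor-style extraction inherently yields one large homogeneous set rather than an exhaustive countable partition.

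The paper's proof avoids $\Delta$-systems altogether. For linkedness it only needs that, for two conditions $p_\alpha,p_\beta$ in the same piece, the restrictions $p_\alpha\restrict(\xi+1)$ and $p_\beta\restrict(\xi+1)$ lie in a common linked class for $\xi=\max(\supp(p_\alpha)\cap\supp(p_\beta))$; the disjoint tails are then pasted onto a common extension of these restrictions. To achieve this with only countably many pieces, it fixes an injection $g\colon\omega_1\to{}^{\omega}2$, chooses for each $\alpha$ a length $N_\alpha$ at which $g$ separates the finitely many points of $\supp(p_\alpha)$, and classifies $p_\alpha$ by the finite function $h$ sending the code $g(\xi)\restrict N_\alpha$ of each $\xi\in\supp(p_\alpha)$ to an index $n$ with $p_\alpha\restrict(\xi+1)\in\Lambda^\xi_n$, where $\{p_\alpha\restrict(\xi+1)\mid\alpha<\omega_1\}=\bigcup_n\Lambda^\xi_n$ comes from the induction hypothesis. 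There are only countably many such $h$, and two conditions with the same $h$ automatically agree on the class of their restriction at the maximal common support point. To complete your outline you would need a countable coding device of this kind in place of the $\Delta$-system extraction.
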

Note that the properties $\sigl$ and $\sigc$ may not be closed under finite support iteration when their length are greater than the continuum.
\begin{proof}
We give the proof only for the linked case.
Proceed by the induction of the length of the iteration.

First, let us check the successor step.
Suppose that $\mdl{P}$ is $\aleph_1$-$\sigl$ and $\d{\mdl{Q}}$ is a $\mdl{P}$-name for an $\aleph_1$-$\sigl$ forcing.
Fix any sequence $\seq{\pair{p_\alpha,\d{q}_\alpha}}{\alpha < \omega_1}$ of conditions in $\mdl{P}*\d{\mdl{Q}}$.
Let $\forcestate[\mdl{P}]{$\{\d{q}_\alpha \mid \alpha < \omega_1\} = \bigcup_{n<\omega}\d{\Lambda}_n$ and each $\d{\Lambda}_n$ is linked}$.
For $\alpha < \omega_1$, select a pair of sequences $\pair{\bar{p}',\bar{n}} = \seq{p'_\alpha \leq p_\alpha, n_\alpha < \omega}{\alpha < \omega_1}$ such that $p'_\alpha \forces{\d{q}_\alpha \in \d{\Lambda}_{n_\alpha}}$ for each $\alpha < \omega_1$.
Let $\{p'_\alpha \mid \alpha < \omega_1\} = \bigcup_{n < \omega} \Lambda_n$ where each $\Lambda_n$ is linked.
Let $R_{m,n} = \{\pair{p_\alpha,\d{q}_\alpha} \mid p'_\alpha \in \Lambda_m \land n_\alpha = n \}$.
Then linked sets $R_{m,n}$ covers $\{\pair{p_\alpha,\d{q}_\alpha} \mid \alpha < \omega_1\}$.

Second, we proceed to the limit step.
Let $\mdl{P}_\gamma = *\seq{\mdl{P}_\alpha,\d{\mdl{Q}}_\alpha}{\alpha < \gamma}$ and fix an $\omega_1$-sequence $\seq{p_\alpha \in \mdl{P}_\gamma}{\alpha < \omega_1}$.
Proofs are clear for the cases $\cof(\gamma) = \omega$ and $\cof(\gamma) > \omega_1$.
For countable cofinality $\gamma$, select a cofinal increasing sequence $\seq{\gamma_n < \gamma}{n < \omega}$ in $\gamma$.
Then, defining $I_n = \{\alpha < \omega_1 \mid \supp(p_\alpha) \subset \gamma_n\}$, each $\seq{p_\alpha}{\alpha \in I_n}$ is covered by countable many linked sets.
For $\gamma$ of greater cofinality than $\omega_1$, $\{\max\supp(p_\alpha) \mid \alpha < \omega_1\}$ is bounded in $\gamma$.
So we prove the essential case $\cof(\gamma) = \omega_1$.
For simplicity of symbols, we may assume that $\gamma = \omega_1$.
By the induction hypothesis, let $\{p_\alpha \restrict (\xi + 1) \mid \alpha < \omega_1\} = \bigcup_{n < \omega}\Lambda^\xi_n$ where each $\Lambda^\xi_n$ is linked.
Fix an injection $g \colon \omega_1 \to {}^\omega2$.
For $h \in {}^{({}^N2)}\omega$, let \[\Lambda_h = \{p_\alpha \mid \forall \xi \in \supp(p_\alpha),\, p_\alpha \restrict (\xi + 1) \in \Lambda^\xi_{h(g(\alpha) \restrict N)}\}.\]
Then the family of $\Lambda_h$ ($h \in \bigcup_{N<\omega}{}^{({}^N2)}\omega$) covers $\{p_\alpha \mid \alpha < \omega_1\}$: For $\alpha < \omega_1$, select $N_\alpha < \omega$ such that $|\{g(\xi) \restrict N_\alpha \mid \xi \in \supp(p_\alpha)\}| = |\supp(p_\alpha)|$, i.e., $N_\alpha$-length sequences $g(\xi) \restrict N_\alpha$ ($\xi \in \supp(p_\alpha)$) are distinct from each other.
For $\xi \in \supp(p_\alpha)$, let $h^\alpha(g(\xi) \restrict N_\alpha) \in \omega$ be such that $p_\alpha \restrict(\xi + 1) \in \Lambda^\xi_{h^\alpha(g(\alpha)\restrict N_\alpha)}$.

Furthermore, each $\Lambda_h$ is linked:
For $p_\alpha,p_\beta \in \Lambda_h$, let $\xi = \max (\supp(p_\alpha) \cap \supp(p_\beta))$ if exists.
Then, since $p_\alpha \restrict(\xi + 1), p_\beta \restrict(\xi + 1) \in \Lambda^\xi_{h(g(\xi) \restrict N)}$ are compatible, select a common extension $q$ of them.
Then the condition $p$ defined by 
\[p(\eta) = \begin{cases}
q(\eta) & (\eta < \xi + 1) \\
p_\alpha(\eta) & (\eta \in \supp(p_\alpha) \setminus (\xi + 1)) \\
p_\beta(\eta) & (\text{otherwise})
\end{cases}\]
is a common extension of $p_\alpha$ and $p_\beta$.
Therefore, $\{\Lambda_h \mid h \in \bigcup_{N<\omega}{}^{({}^N2)}\omega \}$ is a countable family of linked sets that covers $\{p_\alpha \mid \alpha < \omega_1\}$.
\end{proof}
\begin{lem}
$\ma_{\aleph_1}(\sigl)$ and $\ma_{\aleph_1}(\sigc)$ are equivalent to $\ma_{\aleph_1}(\text{$\aleph_1$-}\sigl)$ and $\ma_{\aleph_1}(\text{$\aleph_1$-}\sigc)$, respectively.
\end{lem}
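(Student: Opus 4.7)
The plan is as follows. One direction is immediate: every $\sigl$ poset $\mdl{P}$ is $\text{$\aleph_1$-}\sigl$, since if $\mdl{P}=\bigcup_n \Lambda_n$ with each $\Lambda_n$ linked in $\mdl{P}$, then for any sequence $\seq{p_\alpha}{\alpha<\omega_1}$ the sets $\Lambda_n \cap \{p_\alpha \mid \alpha<\omega_1\}$ are linked and cover the range. Hence the class of $\sigl$ posets is contained in the class of $\text{$\aleph_1$-}\sigl$ posets, so $\ma_{\aleph_1}(\text{$\aleph_1$-}\sigl)\Rightarrow\ma_{\aleph_1}(\sigl)$, and symmetrically for $\sigc$.

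For the converse I would mimic the elementary submodel reduction already used earlier in the proof that $\ma_{\aleph_1}$ follows from the axiom restricted to posets of size $\leq \kappa$. Let $\mdl{P}$ be $\text{$\aleph_1$-}\sigl$ and $\mathcal{D}=\seq{D_\alpha}{\alpha<\omega_1}$ a family of dense sets. Pick a regular large $\theta$ and $M\prec H_\theta$ of size $\aleph_1$ with $\omega_1\subseteq M$ and $\mdl{P},\mathcal{D}\in M$. The plan is to show that $\mdl{P}\cap M$, with the induced order, is an honest $\sigl$ poset of size $\leq\aleph_1$; then $\ma_{\aleph_1}(\sigl)$ applied to $\mdl{P}\cap M$ with the dense sets $\seq{D_\alpha\cap M}{\alpha<\omega_1}$ (which are dense in $\mdl{P}\cap M$ by elementarity) yields a generic $G$, whose upward closure in $\mdl{P}$ is $\mathcal{D}$-generic.

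The key step is the claim that $\mdl{P}\cap M$ is $\sigl$. If $|\mdl{P}\cap M|\leq\aleph_0$ this is trivial; otherwise enumerate $\mdl{P}\cap M=\{p_\alpha\mid\alpha<\omega_1\}$ as an $\omega_1$-sequence. By $\text{$\aleph_1$-}\sigl$ of $\mdl{P}$, write $\{p_\alpha\mid\alpha<\omega_1\}=\bigcup_n\Lambda_n$ where each $\Lambda_n$ is linked in $\mdl{P}$. For $p,q\in\Lambda_n$ we have $p,q\in M$, and the statement ``$p$ and $q$ have a common extension'' has parameters in $M$; by elementarity such an extension exists inside $M$, hence inside $\mdl{P}\cap M$. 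So each $\Lambda_n$ is linked in the sub-poset $\mdl{P}\cap M$, and the decomposition $\bigcup_n \Lambda_n$ witnesses that $\mdl{P}\cap M$ is $\sigl$.

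The $\sigc$ case is identical once pairwise compatibility is replaced by the existence of a common extension of an arbitrary finite subset, which elementarity again lifts into $M$. I do not anticipate any serious obstacle: the only technical care is including $\mdl{P}$ and the entire sequence $\mathcal{D}$ in $M$ (possible since $|\mathcal{D}|=\aleph_1=|M|$) and arranging $\omega_1\subseteq M$, so that the individual $D_\alpha$ are accessible to $M$ and the enumeration step really produces an $\omega_1$-sequence to which the $\text{$\aleph_1$-}\sigl$ hypothesis can be applied.
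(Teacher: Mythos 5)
Your proposal is correct and follows essentially the same route as the paper: an elementary submodel $M \prec H_\theta$ of size $\aleph_1$ containing $\omega_1$, $\mdl{P}$, and $\mathcal{D}$, the observation that $\mdl{P}\cap M$ is $\sigl$ (resp.\ $\sigc$) because the $\aleph_1$-$\sigl$ decomposition of an enumeration of $\mdl{P}\cap M$ consists of pieces that remain linked inside $M$ by elementarity, and then an application of $\ma_{\aleph_1}(\sigl)$ to $\mdl{P}\cap M$ with the dense sets $D_\alpha\cap M$. You in fact spell out the key step (that the pieces are linked \emph{in the sub-poset} $\mdl{P}\cap M$) in more detail than the paper does.
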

\begin{proof}
We give the proof only for the linked case.
The proof is similar to the standard one of the equivalence between $\ma_{\aleph_1}(\sigl)$ and $\ma_{\aleph_1}(\sigl + \text{``size $\leq \aleph_1$"})$.
Assume $\ma_{\aleph_1}(\sigl)$ and fix any $\aleph_1$-$\sigl$ poset $\mdl{P}$ and any sequence $\mathcal{D} = \seq{D_\alpha}{\alpha < \omega_1}$ of dense sets in $\mdl{P}$.
Let $\kappa$ be a large enough regular cardinal and select an elementary submodel $N \prec H_\kappa$ of size $\aleph_1$ which contains $\{\mathcal{D},\mdl{P}\} \cup \omega_1$.
Since $\mdl{P}$ is $\aleph_1$-$\sigl$, the poset $\mdl{P} \cap N$ is $\sigl$.
It is easily seen that each $D_\alpha \cap N$ is dense in $\mdl{P} \cap N$.
Thus a $\mathcal{D}\restrict N = \{D_\alpha \cap N \mid \alpha < \omega_1 \}$-generic filter $G \subset \mdl{P} \cap N$ exists by $\ma_{\aleph_1}(\sigl)$.
$G$ generates a $\mathcal{D}$-generic filter in $\mdl{P}$.
\end{proof}

\section{Ladder system coloring uniformization and $\stat_E$-pc}
We shall describe the relation between ladder system uniformizations and forcing axioms for properties $\stat_E$-pc or $\stat_E$-$\knas$.
\begin{dfn}
We say that $\vec{l} = \seq{l_\alpha \subset \alpha}{\alpha \in \omega_1 \cap \limo }$ is \emph{a ladder system} if each $l_\alpha$ is unbounded in $\alpha$ and of order type $\omega$.
For a ladder system $\vec{l}_\alpha$ on $S$ and $\nu \leq \omega$,
$\vec{c} = \seq{c_\alpha \colon l_\alpha \to \nu}{\alpha \in \omega_1 \cap \limo}$ is \emph{a $\nu$-coloring of $\vec{l}$}.
In this case, the pair $\pair{\vec{l}, \vec{c}}$ is called \emph{a ladder system coloring}.
Let $l_{\alpha,n}$ be the $n$-th element of $l_\alpha$ and $l_\alpha^{n} := l_\alpha \setminus l_{\alpha,n}$
A ladder system coloring $\pair{\vec{l},\vec{c}}$ is \emph{uniformized on $S$ over $k \colon S \to \omega$} if $\bigcup_{\alpha \in S} c_\alpha \restrict l^{k(\alpha)}_\alpha$ is a function.
For a family $\mathcal{A}$ of subsets of $\omega_1$, $\mathsf{U}(\mathcal{A})$ is the assertion that every ladder system coloring is uniformized on some $A \in \mathcal{A}$.
$\mathsf{U}$ denotes the assertion $\mathsf{U}(\{\omega_1\})$.
$\sigma\mathsf{U}$ is the assertion that, for every ladder system coloring $\pair{\vec{l},\vec{c}}$, there exists a partition $\omega_1 \cap \limo = \bigcup_{n < \omega}A_n$ such that $\pair{\vec{l},\vec{c}}$ is uniformized on each $A_n$.
\end{dfn}
Notice that $\sigma\mathsf{U}$ implies $\mathsf{U}(\stat_E)$ for all $E \in \stat$.
% Uniformization on any non-stationary set is provable in $\zfc$.
% \begin{thm}[Excercise in \cite{eklof}]\label{nonstat unif}
% For every non-stationary $I \subset \omega_1 \cap \limo$, $\mathsf{U}(\{I\})$ holds.
% \end{thm}
% \begin{proof}
% If $I \subset \omega_1$ is non-stationary, then there exists a regressive function $f \colon I \to \omega_1$ such that $(f(\alpha),\alpha) \cap I = \emptyset$
% \end{proof}
There are no defference between $\mathsf{U}(\club)$ and $\mathsf{U}$:
\begin{thm}\cite{Eklof:1992aa}
$\mathsf{U}(\club)$ implies (and hence is equivalent to) $\mathsf{U}$.
\end{thm}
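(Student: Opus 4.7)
The aim is to bootstrap a club-uniformization into a total uniformization on $\omega_1 \cap \limo$. Let $\pair{\vec{l}, \vec{c}}$ be a ladder system coloring with $c_\alpha \colon l_\alpha \to \omega$.

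First apply $\mathsf{U}(\club)$ to obtain a club $C \subset \omega_1$ and a function $f_0 \colon \omega_1 \to \omega$ such that $c_\alpha =^{*} f_0 \restrict l_\alpha$ for every $\alpha \in C \cap \limo$; the complement $N = (\omega_1 \cap \limo) \setminus C$ is then nonstationary. After shrinking $C$ if necessary, we may enumerate it continuously as $\{\gamma_\xi : \xi < \omega_1\}$ with $C \cap N = \emptyset$, and consider the gaps $I_\xi = (\gamma_\xi, \gamma_{\xi+1})$; each $I_\xi$ is a countable interval containing only countably many ordinals of $N$.

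Within each countable gap $I_\xi$, the restricted ladder system coloring is a coloring of a ladder system indexed by a countable ordinal, and any such coloring admits a total uniformization by a direct back-and-forth construction (equivalently, countable abelian groups are always Whitehead). This yields a local modification of $f_0$ on $I_\xi$ that uniformizes $c_\alpha$ on $l_\alpha$ for every $\alpha \in N \cap I_\xi$; concatenating these modifications defines a candidate $\tilde f \colon \omega_1 \to \omega$.

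The main obstacle is coherence across gaps: for $\alpha' \in C$ that is a limit point of $C$, the ladder $l_{\alpha'}$ meets countably many gaps $I_\xi$, so the cumulative modifications might destroy $c_{\alpha'} =^{*} \tilde f \restrict l_{\alpha'}$. To overcome this, the local modifications must be arranged so that for each $\alpha' \in C$ only finitely many $\xi$ have the modification zone inside $I_\xi$ meeting $l_{\alpha'}$. This can be done by confining each modification in $I_\xi$ to a carefully chosen tail $(\delta_\xi, \gamma_{\xi+1})$, using the countability of each gap together with a Fodor-type reflection argument on the ladders of limit points of $C$. This combinatorial bookkeeping is the technical heart of the argument; once it is in place, verifying that the resulting $\tilde f$ witnesses $\mathsf{U}$ is routine.
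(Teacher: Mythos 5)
The paper does not actually prove this statement---it is quoted from Eklof's 1992 work---so I can only assess your argument on its own merits, and there is a genuine gap at exactly the point you flag as ``the technical heart.'' Your first two steps are fine: applying $\mathsf{U}(\club)$ to get $C$ and $f_0$, and the ZFC fact that a coloring restricted to the nonstationary set $N=(\omega_1\cap\limo)\setminus C$ can be uniformized gap by gap (any two ladders meet in a finite set, so within each countable gap one can choose tails making the union of the restricted colorings a function). The problem is that ``uniformizable on $C$'' plus ``uniformizable on $N$'' does not combine into ``uniformizable on $C\cup N$,'' and your proposed repair does not close this. Unwinding your requirement, you need to choose for each $\beta\in N$ a tail $l_\beta^{k(\beta)}$ (the modification zone) so that, for every limit point $\alpha$ of $C$, the set $l_\alpha\cap\bigcup_{\beta\in N}l_\beta^{k(\beta)}$ on which $c_\beta$ disagrees with $f_0$ is finite. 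For a fixed pair $(\alpha,\beta)$ this is a finite constraint $k(\beta)\geq d_\beta(\alpha)$, but a single $\beta$ faces such constraints from uncountably many $\alpha$, so $\sup_\alpha d_\beta(\alpha)$ may be infinite and no choice of $k(\beta)$ satisfies all of them. What you actually need is that for each $\alpha$ \emph{all but finitely many} relevant $\beta$ respect their constraint---i.e.\ a function $k$ on $N$ that mod-finite dominates the countable ``demand functions'' $\beta\mapsto d_\beta(\alpha)$ indexed by the limit points of $C$. This is itself a ladder-system-uniformization-type problem (with $\leq^*$ in place of $=^*$, over generalized ladders of possibly larger order type), and it is not a theorem of ZFC; no Fodor-type pressing-down argument produces such a $k$, since Fodor only stabilizes a regressive value on a stationary set and gives no control over infinitely many gaps per $\alpha$ simultaneously. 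In effect your patching problem is another instance of the problem you are trying to solve, so the argument is circular as it stands.

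The structural reason the approach cannot work as written is that it invokes the hypothesis $\mathsf{U}(\club)$ only once, for the given coloring, and then tries to finish by pure ZFC combinatorics. If that were possible, one would obtain the pointwise statement ``every individual ladder system coloring that is uniformizable on a club is fully uniformizable,'' which is much stronger than the implication $\mathsf{U}(\club)\Rightarrow\mathsf{U}$ and is not what Eklof proves. A correct argument must re-use the hypothesis: one encodes the residual patching problem (the finite traces $c_\alpha\restrict(l_\alpha\cap I_\xi)$ of the ladders of limit points of $C$ on the gaps $I_\xi$, viewed as a coloring of a derived ladder system on the index set of $C$) as a new ladder system coloring and applies $\mathsf{U}(\club)$ to \emph{that}, iterating or absorbing as needed. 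I would recommend either carrying out such a derived-coloring argument in detail or simply citing the source, as the paper does; the current step~4 is a statement of intent rather than a proof.
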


\begin{dfn}
Let $\pair{\vec{l}, \vec{c}}$ be a ladder system coloring and $E$ be a stationary set.
\begin{align}
\mdl{P}_E(\vec{l},\vec{c}) = \defset{ p\in \mathrm{Fn}(E,\omega)}{ $\bigcup_{\alpha \in \dom(p)} c_\alpha \restrict l_\alpha^{p(\alpha)}$ is a function}
\end{align}
Here, $\mathrm{Fn}(X,Y)$ denotes the set of all finite partial functions $X \to Y$.
For $q,p \in \mdl{P}$, $q \leq p$ if $q \supset p$.
For $p \in \mdl{P}$, let $c^p = \bigcup_{\alpha \in \dom(p)} c_\alpha \restrict l_\alpha^{p(\alpha)}$.
Note that a finite set $F \subset \mdl{P}$ has a common extension if and only if both of $\bigcup F$ and $\bigcup_{p \in F} c^p$ are functions.
\end{dfn}
When $E$ is co-stationary, then the forcing notions of form $\mdl{P}_E(\vec{l},\vec{c})$ is very close to the Cohen forcing of all finite partial function $\omega_1 \to 2$ by the following lemma.
\begin{dfn}
	For a subset $X \subset \omega_1$, a forcing poset $\mdl{P}$ is \emph{semi-Cohen within $X$} if for every large enough regular cardinal $\kappa$, if $N \prec H_\kappa$ is countable such that $\omega_1 \cap N \in X$ and if $p \in \mdl{P}$, there exists $p^N \in \mdl{P} \cap N$ such that all conditions below $p^N$ in $N$ is compatible with $p$.
\end{dfn}
Thus the property semi-Cohen (see \cite{BALCAR1997187}) is equivalent to the property semi-Cohen within $\omega_1$ of our terminology.
\begin{prop}[\cite{Aoki1}]
$\mdl{P} = \mdl{P}_E(\vec{l},\vec{c})$ is \emph{semi-Cohen within $\omega_1 \setminus E$}, that is, for every large enough regular cardinal $\kappa$, if $N \prec H_\kappa$ is countable such that $\omega_1 \cap N \in \omega_1 \setminus E$ and if $p \in \mdl{P}$, there exists $p^N \in \mdl{P} \cap N$ such that all conditions below $p^N$ in $N$ is compatible with $p$.
\end{prop}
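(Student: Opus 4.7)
My plan is to build $p^N \in \mdl{P} \cap N$ that carries two pieces of information about $p$: the restriction of $p$ to $\dom(p) \cap (\omega_1 \cap N)$, and the coloring values that $p$ commits to at ordinals below $\omega_1 \cap N$ that lie in ladders $l_\beta$ indexed by $\beta \in \dom(p)$ with $\beta > \omega_1 \cap N$. I would set $\delta := \omega_1 \cap N$, $D_0 := \dom(p) \cap \delta$, and $D_1 := \dom(p) \setminus \delta$. Since $\delta \notin E \supseteq \dom(p)$, every $\beta \in D_1$ is strictly greater than $\delta$, so $l_\beta \cap \delta$ is a finite initial segment of $l_\beta$. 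Define the shadow $B := \bigcup_{\beta \in D_1}(l_\beta^{p(\beta)} \cap \delta)$, a finite subset of $\delta$, and for $\gamma \in B$ set $v_\gamma := c^p(\gamma)$, which is well-defined because $p \in \mdl{P}$.

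Then I would invoke elementarity. Although $p$ itself is not in $N$, the finite objects $p \restrict D_0$, $B$, and the function $\gamma \mapsto v_\gamma$ all lie in $N$, each being a finite set whose components are ordinals below $\delta$ or natural numbers. The first-order statement ``$\exists r \in \mdl{P}$ with $r \supset p \restrict D_0$ and $c^r(\gamma) = v_\gamma$ for all $\gamma \in B$'' therefore has parameters in $N$, and it is witnessed in $H_\kappa$ by $r := p$; elementarity of $N \prec H_\kappa$ then yields such a witness $p^N$ inside $N$.

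To finish, I would verify that every $q \in \mdl{P} \cap N$ with $q \leq p^N$ is compatible with $p$ by showing $q \cup p$ is a condition. Function-compatibility on the overlap $\dom(q) \cap \dom(p) \subset D_0$ follows from $q \supset p^N \supset p \restrict D_0$ and $\dom(q) \subset \delta$. A potential clash in $c^{q \cup p}$ at some $\gamma$ can arise only from $\alpha_1 \in D_1$ and $\alpha_2 \in \dom(q) \setminus \dom(p)$ with $\gamma \in l_{\alpha_1}^{p(\alpha_1)} \cap l_{\alpha_2}^{q(\alpha_2)}$; but then $\gamma < \alpha_2 < \delta$ forces $\gamma \in B$, and $c^{p^N}(\gamma) = v_\gamma = c_{\alpha_1}(\gamma)$, combined with $c^q \supset c^{p^N}$, forces $c_{\alpha_2}(\gamma) = v_\gamma$, ruling out the conflict. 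The main obstacle is spotting the correct finite data to bake into $p^N$: the naive choice $p^N = p \restrict D_0$ fails, because it leaves some $q \in N$ free to commit to a color at $\gamma \in B$ disagreeing with $v_\gamma$, and elementarity is precisely what upgrades $p \restrict D_0$ to a condition in $N$ that also pins down $c^{\cdot}(\gamma) = v_\gamma$ on $B$.
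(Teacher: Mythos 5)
Your proof is correct and complete. The paper does not actually include a proof of this proposition (it is imported from the cited reference \cite{Aoki1}), so there is no in-text argument to compare against; but your construction is the natural one, and it correctly isolates the two essential points --- that $\delta=\omega_1\cap N\notin E$ guarantees every $\beta\in\dom(p)\setminus\delta$ strictly exceeds $\delta$, making the shadow $B$ finite, and that $p^N$ must encode not only $p\restrict\delta$ but also the colour commitments $v_\gamma$ on $B$ --- with a correct compatibility verification.
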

The property semi-Cohen within $\omega_1$ is equivalent to the proeprty semi-Cohen \cite{BALCAR1997187}.

\begin{lem}\label{ULC(E) is statEc}
Let $\pair{\vec{l},\vec{c}}$ be a ladder system coloring on a stationary-co-stationary set $E$.
$\mdl{P}_E(\vec{l},\vec{c})$ is $\stat_{E^c}$-precaliber poset. 
Here $E^c$ denotes $\omega_1 \setminus E$.
\end{lem}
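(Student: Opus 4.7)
The key structural observation is that in $\mdl{P} = \mdl{P}_E(\vec{l},\vec{c})$ pairwise compatibility of finitely many conditions already implies joint compatibility: if $p_1,\ldots,p_n \in \mdl{P}$ are pairwise compatible, then both $\bigcup_i p_i$ and $\bigcup_i c^{p_i}$ are functions (any two of the $p_i$, respectively of the $c^{p_i}$, already agree on their overlap), which by the remark immediately following the definition of $\mdl{P}_E(\vec{l},\vec{c})$ is precisely what is needed for $\bigcup_i p_i$ to be a common lower bound in $\mdl{P}$. It therefore suffices, given any $\seq{p_\alpha}{\alpha \in S}$ with $S \in \stat_{E^c}$, to find a pairwise compatible subsequence indexed by some $T^* \in \stat_{E^c}$.

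To this end I would fix a continuous $\subseteq$-increasing chain $\seq{N_\xi}{\xi < \omega_1}$ of countable elementary submodels of $H_\theta$ (for a large regular $\theta$), each containing $\mdl{P},\vec{l},\vec{c},S$ and the sequence as elements, with $N_\xi \cap \omega_1 = \delta_\xi$, and set $T = S \cap \{\delta_\xi \mid \xi < \omega_1\}$, still in $\stat_{E^c}$. For each $\alpha = \delta_\xi \in T$ the hypothesis $\alpha \in E^c$ lets me invoke the semi-Cohen within $E^c$ property stated just above the lemma to obtain $p_\alpha^{N_\xi} \in \mdl{P} \cap N_\xi$ such that every condition in $N_\xi$ below $p_\alpha^{N_\xi}$ is compatible with $p_\alpha$. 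Fixing an enumeration of $\mdl{P}$ in $N_0$, the index of $p_\alpha^{N_\xi}$ under this enumeration is a regressive function of $\alpha$, so Fodor's lemma thins $T$ to a stationary $T^* \subseteq T$ on which $p_\alpha^{N_\xi} = p^*$ is constant.

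Finally I would check pairwise compatibility on $T^*$. Given $\alpha < \beta$ in $T^*$, write $\alpha = \delta_\xi, \beta = \delta_\eta$ with $\xi < \eta$. Then $p^* \in N_\xi \subseteq N_\eta$, and since $\alpha \in N_\eta$ and the sequence lies in $N_\eta$, also $p_\alpha \in N_\eta$. The semi-Cohen property at $\alpha$ applied to $q = p^* \in N_\xi$ (with $q \leq p^*$) already gives that $p^*$ and $p_\alpha$ are compatible in $V$, so by elementarity of $N_\eta$ I can pick such a witness $q \in N_\eta$ with $q \leq p^*, p_\alpha$. Applying the semi-Cohen property at $\beta$ to this $q \in N_\eta$ extending $p^* = p_\beta^{N_\eta}$ yields that $q$ is compatible with $p_\beta$, and any common extension of $q$ and $p_\beta$ is simultaneously below $p_\alpha$ and $p_\beta$.

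The main pitfall will be bookkeeping: one must arrange the chain so that for $\alpha < \beta$ in $T^*$ we genuinely have $p^* \in N_\eta$ and $p_\alpha \in N_\eta$ (the reason for including the entire sequence in the initial model and taking a $\subseteq$-increasing chain), and one must check that the assignment $\alpha \mapsto p_\alpha^{N_\xi}$ can indeed be coded as a regressive function into $\omega_1$ via a fixed enumeration inside $N_0$. Once this setup is in place the rest is a routine elementary submodel chase, with the pleasant feature that the functional nature of $\mdl{P}_E(\vec{l},\vec{c})$ makes pairwise compatibility already suffice for centeredness.
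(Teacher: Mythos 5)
Your argument is correct, but it takes a genuinely different route from the paper. The paper proves the lemma directly: it thins $S$ to a delta system with $p_\alpha\restrict\alpha$ constant and domains separated, decomposes $c^{p_\alpha}$ into the fixed part $c^{p_\alpha\restrict\alpha}$, the finite part $c^{p_\alpha\restrict(\omega_1\setminus\alpha)}\restrict\alpha$ (finite because each $l_\beta\cap\alpha$ with $\beta>\alpha$ is a proper initial segment of an $\omega$-sequence), and the tail part above $\alpha$, and then stabilizes the finite part by another application of the pressing down lemma; centeredness of the resulting subsequence is read off from this decomposition. You instead import the preceding proposition (semi-Cohen within $\omega_1\setminus E$, quoted from the author's earlier work), run an elementary-submodel chain, use Fodor once to freeze the projections $p_\alpha^{N_\xi}$ to a single $p^*$, and chase compatibility through two applications of the semi-Cohen property; the reduction from centered to pairwise compatible via the remark that $\bigcup F$ and $\bigcup_{p\in F}c^p$ being functions is a pairwise condition is correct and is exactly the special feature of this poset that makes your scheme close. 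Your bookkeeping concerns all resolve: $|\mdl{P}|=\aleph_1$ so a bijective enumeration in $N_0$ makes $\alpha\mapsto p_\alpha^{N_\xi}$ regressive, and $\alpha=\delta_\xi<\delta_\eta=N_\eta\cap\omega_1$ automatically puts $\alpha$, hence $p_\alpha$, inside $N_\eta$. The trade-off: the paper's proof is self-contained and purely combinatorial, while yours is modular — it really shows that any poset of size $\aleph_1$ that is semi-Cohen within $X$ is $\stat_X$-Knaster, with the precaliber upgrade coming for free from the functional structure of $\mdl{P}_E(\vec{l},\vec{c})$ — at the cost of resting on the cited proposition rather than on first principles.
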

\begin{proof}
Fix any $S \in \stat_{E^c}$.
Applying the pressing down lemma and cutting off with a club set, pick $S_1 \in \stat_{S}$ such that $\seq{p_\alpha}{\alpha\in S_1}$ forms a delta system such that 
\begin{itemize}
\item $\forall \alpha \in S_1,\, p_\alpha\restrict \alpha = p_{\mathsf{fixed}}$ 
\item $\forall \delta \in S_1,\, \forall \alpha \in S_1 \cap \delta,\, \dom(p_\alpha) \subset \delta$
\end{itemize}
Note that, since $\alpha \notin \dom(p_\alpha)$, $c^{p_\alpha}$ can be decomposed into 
\begin{enumerate}
\item the fixed part $c^{p_\alpha \restrict \alpha } = c^{p_{\mathsf{fixed}}}$,
\item the finite part $c^{p_\alpha \restrict (\omega_1 \setminus \alpha)} \restrict \alpha $, and
\item the tail part $c^{p_\alpha }\restrict (\omega_1 \setminus \alpha) = c^{p_\alpha \restrict (\omega_1 \setminus \alpha)} \restrict (\omega_1 \setminus \alpha)$, which is above $\alpha$.
\end{enumerate}
Again, applying the pressing down lemma, pick $S_2 \in \stat_{S_1}$ such that $\forall \alpha \in S_2,\, c^{p_\alpha \restrict(\omega_1 \setminus \alpha)}\restrict \alpha  = c_{\mathsf{fixed}}$.
Then $\seq{p_\alpha}{\alpha \in S_2}$ is centered.
Indeed, for each $\Gamma \in [S_2]^{<\aleph_0}$, both
\[
\bigcup_{\alpha \in  \Gamma} p_\alpha = \bigcup_{\alpha \in  \Gamma} p_\alpha\restrict \alpha \cup p_\alpha\restrict (\omega_1\setminus \alpha) 
= p_{\mathsf{fixed}} \cup\bigcup_{\alpha \in  \Gamma}  p_\alpha\restrict (\omega_1\setminus \alpha) 
\]
and
\[
\bigcup_{\alpha \in  \Gamma} c^{p_\alpha} 
=\bigcup_{\alpha \in  \Gamma} c^{p_\alpha \restrict \alpha } \cup c^{p_\alpha \restrict (\omega_1 \setminus \alpha)} \restrict \alpha \cup c^{p_\alpha }\restrict (\omega_1 \setminus \alpha) 
= c^{p_{\mathsf{fixed}} } \cup c_{\mathsf{fixed}}  \cup \bigcup_{\alpha \in  \Gamma}  c^{p_\alpha \restrict(\omega_1 \setminus \alpha)}\restrict (\omega_1 \setminus \alpha)
\]
are functions.
\end{proof}
\begin{lem}\label{lem:E-uniformization}
$\ma_{\aleph_1}(\stat_E)$ implies $\mathsf{U}(\{\omega_1\setminus E\})$
\end{lem}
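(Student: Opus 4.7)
The plan is to feed the forcing $\mdl{P} := \mdl{P}_{\omega_1 \setminus E}(\vec{l},\vec{c})$ into $\ma_{\aleph_1}(\xpc{\stat_E})$ and to read the uniformizing function off the generic filter. By \cref{ULC(E) is statEc} applied to the stationary-co-stationary set $\omega_1 \setminus E$ (we may assume $E$ is co-stationary, since otherwise $\omega_1 \setminus E$ is non-stationary and a direct transfinite construction handles the case), the poset $\mdl{P}$ is $\xpc{\stat_E}$, so the forcing axiom is applicable to it.

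For each $\alpha \in (\omega_1 \setminus E) \cap \limo$, take the dense set $D_\alpha := \{p \in \mdl{P} \mid \alpha \in \dom(p)\}$; the substantive step is verifying density. Given $p \in \mdl{P}$ with $\alpha \notin \dom(p)$, the goal is to adjoin $(\alpha, n)$ with $n$ sufficiently large. The crucial observation is that $l_\alpha \cap \dom(c^p)$ is finite: for every $\beta \in \dom(p)$ with $\beta < \alpha$, the contribution $l_\beta^{p(\beta)} \subset \beta$ meets $l_\alpha$ only in $l_\alpha \cap \beta$, which is a finite initial segment of $l_\alpha$ because $l_\alpha$ has order type $\omega$ and is cofinal in $\alpha$; for $\beta \in \dom(p)$ with $\beta > \alpha$, the contribution $l_\beta^{p(\beta)} \cap \alpha$ is likewise a finite initial segment of $l_\beta$. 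Since $\dom(p)$ is finite, choosing $n$ larger than the $l_\alpha$-indices of all elements of $l_\alpha \cap \dom(c^p)$ yields $l_\alpha^n \cap \dom(c^p) = \emptyset$, and $p \cup \{(\alpha,n)\}$ is then a valid extension of $p$ inside $\mdl{P}$.

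The remainder follows the standard MA template: applying $\ma_{\aleph_1}(\xpc{\stat_E})$ to the $\aleph_1$-sized family $\{D_\alpha \mid \alpha \in (\omega_1 \setminus E) \cap \limo\}$ furnishes a filter $G$, and $k := \bigcup G$ is a total function on $(\omega_1 \setminus E) \cap \limo$ by genericity. Because any two members of $G$ have a common extension in $\mdl{P}$, the union $\bigcup_\alpha c_\alpha \restrict l_\alpha^{k(\alpha)}$ is single-valued, so $k$ uniformizes $\pair{\vec{l},\vec{c}}$ on $\omega_1 \setminus E$, witnessing $\mathsf{U}(\{\omega_1 \setminus E\})$. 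I do not expect any conceptual obstacle; the only calculation requiring care is the finiteness bookkeeping in the density check, where the contributions from $\beta < \alpha$ and $\beta > \alpha$ must be handled separately using $\ot(l_\gamma) = \omega$ for the appropriate $\gamma$.
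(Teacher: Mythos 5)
Your proof is correct and follows exactly the route the paper intends: the paper states this lemma without proof immediately after \cref{ULC(E) is statEc}, and you apply the forcing axiom to $\mdl{P}_{\omega_1\setminus E}(\vec{l},\vec{c})$, which is $\xpc{\stat_E}$ by that lemma with $E$ replaced by its complement. Your density verification (that $l_\alpha\cap\dom(c^p)$ is finite because each $l_\beta^{p(\beta)}$ meets $l_\alpha$ in a finite set, since the relevant ladders have order type $\omega$) correctly supplies the detail the paper omits, and your remark on the degenerate case where $\omega_1\setminus E$ is non-stationary is also fine.
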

Conversly, a non-uniformizable ladder system coloring can be forced by the Cohen forcing.
\begin{lem}[\cite{MR1178369}]
Cohen forcing $\mathbb{C} = {}^{<\omega}2$ forces $\lnot\mathsf{U}(\stat)$.
\end{lem}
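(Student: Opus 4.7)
The plan is to fix a ladder system $\vec{l} \in V$ enjoying the following overlap property: for every stationary $T \subseteq \omega_1$ and every $k < \omega$ there exist $\alpha, \beta \in T$ and $n, m \geq k$ with $n \neq m$ and $l_{\alpha,n} = l_{\beta,m}$. Let $\dot{c}$ denote the canonical name for the Cohen generic real. I would then define a name $\dot{\vec{c}}^{*}$ for a $2$-coloring of $\vec{l}$ by setting $\dot{c}^{*}_\alpha(l_{\alpha,n}) := \dot{c}(n)$ for every $\alpha \in \omega_1 \cap \limo$ and $n < \omega$, and claim that $\dot{\vec{c}}^{*}$ is forced by $1_\mathbb{C}$ to witness $\lnot\mathsf{U}(\stat)$.

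Suppose toward contradiction that some $p_0 \in \mathbb{C}$ forces $(\dot{f}, \dot{k})$ to uniformize $\pair{\vec{l}, \dot{\vec{c}}^{*}}$ on a stationary $\dot{S}$. The first step is to descend to ground-model data. For each $\alpha \in \omega_1 \cap \limo$ let $q_\alpha \leq p_0$, if any, be a condition forcing $\alpha \in \dot{S}$ and deciding $\dot{k}(\alpha)$. The set $T_0$ of such $\alpha$ is stationary in $V$: otherwise a ground-model club disjoint from $T_0$ would be forced disjoint from $\dot{S}$, contradicting that $\dot{S}$ is forced stationary and $\mathbb{C}$ is ccc. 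Since $\mathbb{C} \times \omega$ is countable, pigeonhole yields a stationary $T \subseteq T_0$ in $V$, a fixed $q^{*} \leq p_0$, and $k^{*} < \omega$ such that $q_\alpha = q^{*}$ and $q^{*}$ decides $\dot{k}(\alpha) = k^{*}$ for every $\alpha \in T$; consequently $q^{*} \Vdash T \subseteq \dot{S} \land \forall \alpha \in T,\ \dot{k}(\alpha) = k^{*}$.

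Next I apply the overlap property to $T$ and some $k \geq k^{*}$ chosen so that $\dom(q^{*}) \subseteq [0, k)$, obtaining $\alpha, \beta \in T$, distinct $n, m \geq k$, and $\gamma = l_{\alpha,n} = l_{\beta,m}$; by the choice of $k$, both $n, m$ lie outside $\dom(q^{*})$. The uniformization clause forces $q^{*} \Vdash \dot{f}(\gamma) = \dot{c}^{*}_\alpha(\gamma) = \dot{c}(n)$ and $q^{*} \Vdash \dot{f}(\gamma) = \dot{c}^{*}_\beta(\gamma) = \dot{c}(m)$, so $q^{*} \Vdash \dot{c}(n) = \dot{c}(m)$. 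However, one may extend $q^{*}$ to $q^{+} \in \mathbb{C}$ with $q^{+}(n) = 0$ and $q^{+}(m) = 1$; then $q^{+} \Vdash \dot{c}(n) \neq \dot{c}(m)$, contradicting $q^{+} \leq q^{*} \Vdash \dot{c}(n) = \dot{c}(m)$.

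The main obstacle is the construction of $\vec{l} \in V$ satisfying the overlap property. I would build $\vec{l}$ by recursion on $\alpha \in \omega_1 \cap \limo$: fixing an increasing cofinal sequence $(\beta^\alpha_n)_{n < \omega}$ of limits below $\alpha$ and setting $l_{\alpha,n} := l_{\beta^\alpha_n, n+1}$, so that $l_{\alpha,n}$ automatically appears at the shifted position $n+1$ in $l_{\beta^\alpha_n}$, delivering collisions at distinct positions. Care is needed to guarantee that the resulting $l_\alpha$ is strictly increasing and cofinal in $\alpha$, and to arrange the sequences $(\beta^\alpha_n)$ so that every stationary $T \in V$ contains $\alpha$ together with $\beta^\alpha_n$ simultaneously for arbitrarily large $n$; this latter requirement, which amounts to a Fodor-style bookkeeping along the recursion, is the combinatorial heart of the matter, while the forcing argument above is then essentially mechanical.
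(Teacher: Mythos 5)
The forcing half of your argument is sound: the descent via ccc-ness to a ground-model stationary $T$, the pigeonhole over the countable set $\mathbb{C}\times\omega$ producing a single $q^{*}$ and a single $k^{*}$, and the final extension of $q^{*}$ to force $\dot{c}(n)\neq\dot{c}(m)$ are all correct. The gap is exactly where you locate it---the existence in $V$ of a ladder system with your overlap property---but it is not a deferrable technicality: it is an obstruction that cannot be removed in $\zfc$. Your overlap property for $\vec{l}$ is equivalent to the assertion that the \emph{position colouring} $d_\alpha(l_{\alpha,n}):=n$, an $\omega$-colouring of $\vec{l}$, is not uniformizable on any stationary set (a uniformization of $\seq{d_\alpha}{\alpha}$ on a stationary $T$ with, after pigeonholing, a constant bound $k$ is precisely a witness that the tails $l_\alpha^{k}$ for $\alpha\in T$ have no mismatched collisions, and conversely). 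Since $\ma_{\aleph_1}$ implies that every ladder system colouring with countably many colours is fully uniformizable, over a ground model of $\ma_{\aleph_1}+\lnot\ch$ \emph{no} ladder system has your overlap property. And the failure is not an artifact of your proof: if the tails $l_\alpha^{k(\alpha)}$, $\alpha\in T$, are pairwise position-compatible, then $f(l_{\alpha,n}):=r(n)$ is itself a well-defined uniformization of your colouring $\dot{c}^{*}$ on $T$, so in such a model $1_{\mathbb{C}}$ does \emph{not} force your $\dot{\vec{c}}^{*}$ to witness $\lnot\mathsf{U}(\stat)$.

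Your sketched recursion does not repair this. Setting $l_{\alpha,n}:=l_{\beta^\alpha_n,n+1}$ does create mismatched collisions, but to exploit them you need, for every stationary $T$, some $\alpha\in T$ with $\beta^\alpha_n\in T$ for arbitrarily large $n$; this is a stationary-set-guessing property of the auxiliary ladders $\seq{\beta^\alpha_n}{n<\omega}$, and a recursion of length $\omega_1$ with countably many choices per stage cannot anticipate all $2^{\aleph_1}$ stationary sets without a prediction principle such as $\diamondsuit$---and, by the previous paragraph, the target property is outright false under $\ma_{\aleph_1}+\lnot\ch$ no matter how cleverly you recurse. Any correct proof of the lemma as stated must therefore make the non-uniformizability come from the genericity of the Cohen real itself (for instance, by arranging that at stationarily many $\delta$ the pattern $\dot{c}_\delta$ is forced to disagree infinitely often with every candidate value of $\dot{f}$ on $l_\delta$ that a fixed condition can decide, using the countability of $\mathbb{C}$ relative to a countable elementary submodel), rather than from a ground-model overlap structure on a fixed ladder system. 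Alternatively you would have to weaken the lemma to ground models, such as models of $\diamondsuit$, in which a ladder system with a non-uniformizable position colouring can actually be constructed.
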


\begin{lem}\label{lem:stat non-unif pres}
If $\pair{\vec{l},\vec{c}}$ is a ladder system coloring that is not uniformizable on any stationary subset of $E$ and $\mdl{P}$ is $\stat_E$-Knaster forcing, then $\mdl{P}$ forces that $\pair{\vec{l},\vec{c}}$ is not uniformizable on any stationary subset of $E$.
\end{lem}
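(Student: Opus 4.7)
The plan is to prove the contrapositive by a standard chain-condition reflection. Suppose for contradiction that some $p \in \mdl{P}$ forces there to be $\mdl{P}$-names $\d{S}$ for a stationary subset of $E$, $\d{k}\colon \d{S}\to\omega$, and $\d{f}\colon\omega_1\to\omega$ with $c_\alpha \restrict l_\alpha^{\d{k}(\alpha)} \subset \d{f}$ for every $\alpha \in \d{S}$. The aim is to extract from these names a ground-model uniformization of $\pair{\vec{l},\vec{c}}$ on a stationary subset of $E$, contradicting the hypothesis.

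First I would note that $\mdl{P}$ is ccc: any uncountable antichain could be reindexed along a bijection $E\cong\omega_1$, contradicting $\stat_E$-(K). Hence ground-model stationary sets remain stationary in any generic extension. Put
\[
S_0 := \{\alpha \in E \mid \exists q \leq p,\ q \forces \c{\alpha} \in \d{S}\}.
\]
Then $p \forces \d{S} \subset \c{S_0}$, and since $p$ forces $\d{S}$ to be stationary, $S_0$ is stationary in $V$. For each $\alpha \in S_0$ pick $p_\alpha \leq p$ and $k_\alpha < \omega$ with $p_\alpha \forces \c{\alpha} \in \d{S} \wedge \d{k}(\c{\alpha}) = \c{k}_\alpha$. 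Pressing down on $\alpha \mapsto k_\alpha$ yields a stationary $S_1 \subset S_0$ on which $k_\alpha$ takes a constant value $k$. Then I invoke $\stat_E$-(K) on $\seq{p_\alpha}{\alpha \in S_1}$ to obtain a stationary $S_2 \subset S_1$ such that $\{p_\alpha \mid \alpha \in S_2\}$ is pairwise compatible.

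For any $\alpha,\beta \in S_2$, any common extension $r \leq p_\alpha, p_\beta$ forces both $c_\alpha \restrict l_\alpha^k \subset \d{f}$ and $c_\beta \restrict l_\beta^k \subset \d{f}$, so these two ground-model partial functions must agree on their overlap --- an absolute statement, hence true in $V$. Consequently $\bigcup_{\alpha \in S_2} c_\alpha \restrict l_\alpha^k$ is a function in $V$, providing a uniformization of $\pair{\vec{l},\vec{c}}$ on the stationary set $S_2 \subset E$, contradicting the hypothesis on $\pair{\vec{l},\vec{c}}$. No serious obstacle is anticipated; the delicate point worth articulating is that \emph{pairwise} compatibility suffices (so $\stat_E$-(K), not the stronger $\stat_E$-pc, is exactly the right hypothesis), since ``is a function'' is a two-point condition on the domain, and it is essential to press $\d{k}$ down to a constant \emph{before} invoking the chain condition, since otherwise differing values of $k$ at $\alpha$ and $\beta$ could produce genuinely incompatible partial uniformizations at shared domain points.
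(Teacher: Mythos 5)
Your proof is correct and takes essentially the same route as the paper's: pass to the contrapositive, reflect the names to a stationary $T\subset E$ of ordinals $\alpha$ decided by some $p_\alpha\leq p$ together with a value $k_\alpha$, refine by $\stat_E$-(K) to a stationary linked subfamily, and deduce from pairwise compatibility that the tails $c_\alpha\restrict l_\alpha^{k_\alpha}$ cohere. The one divergence is your pressing $\d{k}$ down to a constant before applying the chain condition; this is harmless but, contrary to your closing remark, not essential --- the paper keeps the varying $k_\alpha$ and uniformizes over the function $\alpha\mapsto k_\alpha$ (which the definition of uniformization on a set explicitly permits), since a common extension of $p_\alpha$ and $p_\beta$ forces both $c_\alpha\restrict l_\alpha^{k_\alpha}$ and $c_\beta\restrict l_\beta^{k_\beta}$ into the generic uniformizing function, so they agree on their overlap whether or not $k_\alpha=k_\beta$.
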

\begin{proof}
We prove the contrapositive.
Suppose that 
\begin{align}
p\forcestate[\mdl{P}]{$\d{S} \subset \c{E}$ is stationary and $\pair{\vec{l},\vec{c}}\c{}$ is uniformized over $\d{k}\colon \d{S}  \to \omega$}.\label{1}
\end{align}
Then $T = \{\alpha \in E \mid \exists q\leq p,\, q\forces{\c{\alpha} \in \d{S}} \}$ is stationary.
Fix a sequence $\pair{\bar{p},\bar{k}}=\seq{k_\alpha < \omega,\, p_\alpha \in \mdl{P}}{\alpha \in T}$ such that
\begin{align}
p_\alpha \forces[\mdl{P}] \alpha \in \d{S} \land \d{k}(\alpha) = \c{k}_\alpha.\label{2}
\end{align}
Pick a stationary set $T_1 \subset T$ such that $\bar{p}\restrict T_1$ is linked.
Then, for each $\alpha, \beta \in T_1$, \[(p_\alpha \land p_\beta \Vdash)\forall \xi \in l^{k_\alpha}_\alpha  \cap l^{k_\beta}_\beta,\, c_\alpha(\xi) = c_\beta(\xi).\]
Thus $\pair{\vec{l},\vec{c}}$ can be uniformized.
\end{proof}

\begin{thm}[$\ch$]\label{thm:local sep}
Some $\stat_E$-Knaster poset forces $\lnot \mathsf{U}(\stat_E) + \ma(\xknas{\stat_E}) + \lnot\ch$ and hence it forces $\lnot \mathsf{U}(\stat_E) + \mathsf{U}(\{\omega_1 \setminus E\})$ and $\lnot\ma(\xpc{\stat_{\omega_1 \setminus E}}) + \ma(\xknas{\stat_E})$.
\end{thm}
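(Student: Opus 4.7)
The plan is a standard finite-support iteration of length $\omega_2$ over a ground model $V \models \ch$ (taking $2^{\aleph_1} = \aleph_2$ as well, to enable the usual bookkeeping). I would first arrange a witness ladder system coloring $\pair{\vec{l},\vec{c}}$ not uniformizable on any stationary subset of $E$ by taking the first factor of the iteration to be Cohen forcing: by the lemma cited above, Cohen forcing forces $\lnot\mathsf{U}(\stat)$ (so the added coloring is in particular not uniformized on any stationary subset of $E$), and Cohen forcing is $\sigc$ and hence $\stat_E$-Knaster. Thereafter I would iterate with finite support, choosing at successor stages---via the standard $\omega_2$-bookkeeping of $\aleph_1$-sized $\stat_E$-Knaster posets and $\aleph_1$-sized families of their dense sets---so that the final model satisfies $\ma_{\aleph_1}(\xknas{\stat_E})$ for posets of size $\leq\aleph_1$ and hence, by the equivalence lemma above, the full axiom $\ma_{\aleph_1}(\xknas{\stat_E})$. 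Adding $\aleph_2$ Cohen reals along the way gives $\lnot\ch$.

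Two preservations need to be checked. First, the chain condition: by the Knaster analog of \cref{lem:statpc pres}, the entire iteration $\mdl{P}_{\omega_2}$ is $\stat_E$-Knaster. Second, non-uniformizability of $\pair{\vec{l},\vec{c}}$: once the initial Cohen factor has added the coloring, the tail of the iteration is still $\stat_E$-Knaster (as computed in the Cohen extension), so \cref{lem:stat non-unif pres} applied to the tail ensures that $\pair{\vec{l},\vec{c}}$ is not uniformizable on any stationary subset of $E$ in the final model. This establishes the conjunction $\lnot\mathsf{U}(\stat_E) + \ma(\xknas{\stat_E}) + \lnot\ch$, which is the main assertion.

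The two ``hence'' consequences are then soft. Because every $\stat_E$-pc poset is $\stat_E$-Knaster, $\ma(\xknas{\stat_E})$ implies $\ma(\xpc{\stat_E})$; applying this to $\mdl{P}_{\omega_1\setminus E}(\vec{l}',\vec{c}')$---which is $\stat_E$-pc by \cref{ULC(E) is statEc}---together with \cref{lem:E-uniformization} gives uniformization of any ladder system coloring $\pair{\vec{l}',\vec{c}'}$ on $\omega_1\setminus E$, i.e., $\mathsf{U}(\{\omega_1\setminus E\})$. Conversely, if $\ma(\xpc{\stat_{\omega_1\setminus E}})$ held, then applying it to $\mdl{P}_E(\vec{l},\vec{c})$---which is $\stat_{\omega_1\setminus E}$-pc by \cref{ULC(E) is statEc}---together with the dense sets $D_\alpha = \{p : \alpha \in \dom(p)\}$ for $\alpha \in E$ would uniformize $\pair{\vec{l},\vec{c}}$ on $E$, contradicting $\lnot\mathsf{U}(\stat_E)$. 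The heart of the argument is the preservation step via \cref{lem:stat non-unif pres}; everything else is routine iteration bookkeeping and easy class inclusions.
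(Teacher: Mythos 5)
Your proposal is correct and follows essentially the same route as the paper: Cohen forcing first to add a coloring witnessing $\lnot\mathsf{U}(\stat_E)$, then a finite-support bookkeeping iteration of $\stat_E$-Knaster posets, with \cref{lem:stat non-unif pres} and the iteration lemma doing the preservation work. The paper's proof is a two-sentence sketch of exactly this construction, so your write-up merely supplies the details (and the two ``hence'' deductions) that it leaves implicit.
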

\begin{proof}
Cohen forcing $\mathbb{C}$ forces $\lnot\mathsf{U}(\stat) + \ch$.
In the Cohen extension $V^\mathbb{C}$, we force $\ma_{\aleph_1}(\xpc{\stat_E})$ by the standard bookeeping argument.
\end{proof}
The following is also immediate from \cref{lem:stat non-unif pres}.
\begin{lem}\label{lem:stat non-unif pres total}
$\stat$-Knaster posets preserve $\lnot\mathsf{U}(\stat)$
\end{lem}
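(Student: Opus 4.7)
The plan is to reduce immediately to \cref{lem:stat non-unif pres}, applied with $E = \omega_1$. Recall that $\stat = \stat_{\omega_1}$, so a $\stat$-Knaster poset is by definition a $\stat_{\omega_1}$-Knaster poset, and $\lnot \mathsf{U}(\stat)$ asserts the existence of some ladder system coloring $\pair{\vec{l},\vec{c}}$ that is not uniformizable on any stationary subset of $\omega_1$. Preserving the statement $\lnot \mathsf{U}(\stat)$ therefore amounts to exhibiting, in the extension, a ladder system coloring with the same non-uniformizability property, and the natural candidate is the ground-model witness itself.

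The argument I envision proceeds as follows. Assume $\lnot\mathsf{U}(\stat)$ holds in $V$ and fix a witness $\pair{\vec{l},\vec{c}} \in V$. Let $\mdl{P}$ be any $\stat$-Knaster poset. Since $\stat$-Knaster implies ccc, $\omega_1$ is preserved in $V^{\mdl{P}}$, so the pair $\pair{\vec{l},\vec{c}}$ remains a ladder system coloring in $V^{\mdl{P}}$. Now invoke \cref{lem:stat non-unif pres} with $E = \omega_1$: the hypothesis is precisely that $\pair{\vec{l},\vec{c}}$ is non-uniformizable on every stationary subset of $\omega_1$, and $\mdl{P}$ is by assumption $\stat_{\omega_1}$-Knaster. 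The conclusion is that $\mdl{P}$ forces $\pair{\vec{l},\vec{c}}$ to remain non-uniformizable on every stationary subset of $\omega_1$, which is exactly $\lnot\mathsf{U}(\stat)$ in $V^{\mdl{P}}$.

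There is no substantive obstacle beyond this observation. All of the real work, namely capturing a name $\d{S}$ for a putative stationary set together with a name $\d{k}$ for a uniformization, refining the induced sequence $\seq{p_\alpha}{\alpha \in T}$ along a stationary subset via $\stat_E$-Knasterness, and combining linkedness with the compatibility of the corresponding partial uniformizations to produce a ground-model uniformization, has already been carried out in \cref{lem:stat non-unif pres}. Instantiating $E = \omega_1$ turns that localized statement into the global preservation result, which is why the paper flags this lemma as immediate.
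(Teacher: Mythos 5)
Your proof is correct and coincides with the paper's intent: the paper offers no separate argument, stating only that the result is immediate from \cref{lem:stat non-unif pres}, and your instantiation of that lemma with $E = \omega_1$ (applied to a fixed ground-model witness of $\lnot\mathsf{U}(\stat)$, which remains a ladder system coloring since $\stat$-Knaster implies ccc) is exactly that reduction.
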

Combininig \cref{lem:E-uniformization} and \cref{lem:stat non-unif pres total} we have the following.
\begin{thm}[$\ch$]\label{thm:global sep}
$\lnot\mathsf{U}(\stat) + \ma(\xknas{\stat}) + \lnot \ch$ is forced by a $\stat$-Knaster forcing, so is $\forall E \in \stat,\,\lnot\ma(\xpc{\stat_{\omega_1 \setminus E}}) + \ma(\xknas{\stat})$.
\end{thm}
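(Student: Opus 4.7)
The plan is to construct, over a ground model of $\ch$, a single finite-support iteration $\mdl{P}$ of length $\omega_2$ that is itself $\stat$-Knaster and forces the required combination $\lnot\mathsf{U}(\stat) + \ma_{\aleph_1}(\xknas{\stat}) + \lnot\ch$. At stage $0$ I use Cohen forcing $\mathbb{C}$ (which is $\sigma$-centered, hence $\stat$-Knaster); by the cited lemma $\mathbb{C}$ forces $\lnot\mathsf{U}(\stat)$, so fix in $V^{\mathbb{C}}$ a witnessing ladder system coloring $\pair{\vec{l},\vec{c}}$ that is uniformizable on no stationary subset of $\omega_1$. Subsequent stages are $\stat$-Knaster posets of size $\leq \aleph_1$, selected by a standard bookkeeping over $\omega_2$ so that every such poset arising in an intermediate extension is eventually handled; this, together with the reduction lemma from $\ma_{\aleph_1}$ to the size $\leq \aleph_1$ fragment, forces $\ma_{\aleph_1}(\xknas{\stat})$ in $V^{\mdl{P}}$, and the length $\omega_2$ delivers $\lnot \ch$.

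By the finite-support preservation of $\stat$-Knaster (the Knaster analogue of \cref{lem:statpc pres}), the iteration $\mdl{P}$ is $\stat$-Knaster. Factor $\mdl{P} \cong \mathbb{C} \ast \d{\mdl{Q}}$; working in $V^{\mathbb{C}}$, the tail $\d{\mdl{Q}}$ is again $\stat$-Knaster as a finite-support iteration of such posets. Applying \cref{lem:stat non-unif pres total} to $\d{\mdl{Q}}$ in $V^{\mathbb{C}}$ shows that the non-uniformizability of $\pair{\vec{l},\vec{c}}$ on every stationary subset of $\omega_1$ is preserved; hence in $V^{\mdl{P}}$ the coloring $\pair{\vec{l},\vec{c}}$ still witnesses $\lnot \mathsf{U}(\stat)$. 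This settles the first assertion.

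For the second assertion, fix any stationary $E$. When $\omega_1 \setminus E$ is stationary, $E$ is itself a stationary set on which $\pair{\vec{l},\vec{c}}$ is not uniformizable, so $\mathsf{U}(\{E\})$ fails in $V^{\mdl{P}}$; the contrapositive of \cref{lem:E-uniformization}, applied with $\omega_1 \setminus E$ in the role of the fixed stationary set, then yields $\lnot\ma_{\aleph_1}(\xpc{\stat_{\omega_1 \setminus E}})$. When $\omega_1 \setminus E$ is non-stationary the statement is vacuous (or reduces to $\lnot\ma$, which follows directly from $\lnot\mathsf{U}$).

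The main obstacle I anticipate is the preservation of $\lnot\mathsf{U}(\stat)$ through the $\omega_2$-iteration — one might expect to have to argue stage by stage — but this is dissolved by applying \cref{lem:stat non-unif pres total} once to the tail $\d{\mdl{Q}}$ as a single $\stat$-Knaster forcing, bypassing any per-stage or limit-stage bookkeeping for the coloring. The remaining bookkeeping for $\ma_{\aleph_1}(\xknas{\stat})$ is the standard MA construction over $\omega_2$ stages.
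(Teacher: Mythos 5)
Your proposal is correct and follows essentially the same route as the paper: Cohen forcing first (to produce a coloring witnessing $\lnot\mathsf{U}(\stat)$ under $\ch$), then a finite-support bookkeeping iteration of $\stat$-Knaster posets whose $\stat$-Knaster tail preserves $\lnot\mathsf{U}(\stat)$ by \cref{lem:stat non-unif pres total}, with the second assertion obtained from the contrapositive of \cref{lem:E-uniformization}. The paper compresses all of this into a one-line combination of those two lemmas together with the construction already used for \cref{thm:local sep}; your write-up just makes the factorization through the tail and the edge case of non-stationary $\omega_1\setminus E$ explicit.
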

\section{Stationary refinement v.s.~countable decomposition}
We now turn into the separation of forcing axioms for $\sigma$-linked and $\xpc{\stat}$.
A natural strategy is as follows:
\begin{itemize}
\item Construct a combinatorial structure such that every ``stationary part'' contains a ``nice stationary subpart'' (say, it has the SS property) but every countable decomposition contains a ``bad piece'' (say, it is not countably decomposable).
\item Any $\aleph_1$-$\sigma$-linked forcing preserves this situation for the constructed structure.
\item In contrast, a $\xpc{\stat}$ forcing destroys this situation, that is, every structure with SS property can be forced to be ``totally nice'' or decomposable into countably many ``nice" pieces.
\end{itemize}
We apply the above strategy to two structures: ladder system colorings and Aronszajn trees.
The corresponding notions of “nice,” the SS property, and countable decomposability for each structure are as following:
\begin{table}[hbt]
  \centering
  \begin{tabular}{|c|c|c|c|}
    \hline
    Structure & “Nice”  & SS property & Countable Decomposability \\
    \hline
    Ladder System Coloring & uniformized & SS-uniformizable & $\sigma$-uniformizable \\
    Aronszajn Tree & antichain & SS tree & special tree \\
    \hline
  \end{tabular}
  \label{tab:property-summary}
\end{table}

\subsection{SS uniformizablity and $\xpc{\stat}$}
\begin{dfn}
A ladder system coloring $\pair{\vec{l},\vec{c}}$ is \emph{SS-uniformizable} if, for every stationary set $E\subset\omega_1$, $\pair{\vec{l},\vec{c}}$ is uniformized on some  stationary set $E' \subset E$. 
$\pair{\vec{l},\vec{c}}$ is \emph{$\sigma$-uniformizable} if there exists a decomposition $\bigcup_{n < \omega}A_n = \omega_1$ such that, for each $n$, $\pair{\vec{l},\vec{c}}$ is uniformized on $A_n$.
\end{dfn}
\begin{lem}
Assume $\diamondsuit$. Then some $\sigma$-closed forcing forces $\forall E \in \stat,\, \mathsf{U}(\stat_E)$.
\end{lem}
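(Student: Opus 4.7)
The plan is to build the required forcing as a countable support iteration $\mdl{P}_{\omega_2}$ of length $\omega_2$, where at each stage we force, for a triple $(\vec{l}, \vec{c}, E)$ occurring in the current intermediate extension, with a $\sigma$-closed iterand $\mdl{Q}(\vec{l}, \vec{c}, E)$ that adds a stationary $E' \subseteq E$ together with a function $f \colon \omega_1 \to \omega$ and a map $k \colon E' \to \omega$ witnessing that $(\vec{l}, \vec{c})$ is uniformized on $E'$. A condition in $\mdl{Q}(\vec{l}, \vec{c}, E)$ is a triple $(s_p, A_p, k_p)$, where $s_p$ is a countable partial function $\omega_1 \to \omega$, $A_p \subseteq E$ is countable, $k_p \colon A_p \to \omega$, and $s_p(\xi) = c_\alpha(\xi)$ for all $\alpha \in A_p$ and $\xi \in l_\alpha^{k_p(\alpha)} \subseteq \dom(s_p)$; we order by componentwise extension.

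First I would check that $\mdl{Q}(\vec{l}, \vec{c}, E)$ is $\sigma$-closed: the componentwise union of a descending $\omega$-chain is again a condition, since countable unions of countable objects are countable and function compatibility on $s$ is preserved. The main work is to show that $\mdl{Q}$ preserves stationary subsets of $\omega_1$ and in particular forces $A^G := \bigcup_{p \in G} A_p$ to be stationary in $E$. Given a club name $\dot C$, a condition $p$, and a stationary $T \subseteq E$, I pick a countable $N \prec H_\theta$ containing the relevant parameters with $\gamma := N \cap \omega_1 \in T$, build a descending chain $p = p_0 \geq p_1 \geq \cdots$ in $N$ meeting all dense subsets of $\mdl{Q}$ in $N$ and forcing an increasing sequence $\gamma_n \in \dot C$ with $\sup \gamma_n = \gamma$, and form the componentwise limit $p_\omega$. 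It remains to extend $p_\omega$ by adding $\gamma$ to $A$ with some $k_\gamma$, which requires $s_{p_\omega}(\xi) = c_\gamma(\xi)$ for all $\xi \in l_\gamma^{k_\gamma} \cap \dom(s_{p_\omega})$.

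This last requirement is where $\diamondsuit$ enters: encoding $(\vec{l}, \vec{c})$ as a subset of $\omega_1$, $\diamondsuit$ supplies a stationary set $G \subseteq \omega_1$ at which $(\vec{l}, \vec{c}) \restrict \gamma$ is correctly guessed by the $\diamondsuit$-sequence. Choosing $N$ with $\gamma \in G \cap T$ and carrying out the construction relative to this guess forces $s_{p_n}$ to assign $c_\gamma$-compatible values whenever $s_{p_n}$ is defined on ordinals that the guess predicts will belong to $l_\gamma$; at the limit, extension by $\gamma$ is possible without conflict. Finally, the countable support iteration of $\sigma$-closed forcings is $\sigma$-closed, $\ch$ is preserved throughout, and standard bookkeeping ensures every triple $(\vec{l}, \vec{c}, E)$ arising in some intermediate extension is processed at some later stage; the witnesses $E'$ and $f$ added then persist upward since uniformization is absolute and the tail iteration preserves stationarity of $E'$. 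The principal obstacle is the stationarity-preservation verification of the iterand $\mdl{Q}$ — reconciling the need to grow $\dom(s_p)$ densely with the rigidity imposed by $c_\gamma \restrict l_\gamma$ at $\gamma = N \cap \omega_1$ is exactly where $\diamondsuit$ is essential.
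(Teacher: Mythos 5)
Your reduction to a single $\sigma$-closed iterand per triple $(\vec{l},\vec{c},E)$, the fusion along a chain of elementary submodels, and the identification of the crux (adding $\gamma=N\cap\omega_1$ to the generic stationary set requires the accumulated uniformizing function to be compatible with $c_\gamma$ on $l_\gamma$) all match the paper. But the step where you invoke $\diamondsuit$ is a genuine gap. A $\diamondsuit$-guess of $(\vec{l},\vec{c})\restrict\gamma$ only recovers $\seq{(l_\alpha,c_\alpha)}{\alpha<\gamma}$; it carries no information whatsoever about the ladder $l_\gamma$ and the colour $c_\gamma$ \emph{at} $\gamma$, which is exactly the data you need to steer the values $s_{p_n}(\xi)$ for $\xi\in l_\gamma$. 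Nor can any guessing sequence predict $(l_\gamma,c_\gamma)$ at stationarily many $\gamma$ uniformly in the coloring: given any candidate sequence $\seq{(m_\gamma,d_\gamma)}{\gamma}$ one simply takes $l_\gamma=m_\gamma$ and $c_\gamma=d_\gamma+1$ and the guess is never correct. So ``carrying out the construction relative to this guess'' cannot be made precise, and the extension of $p_\omega$ by $\gamma$ may genuinely fail: the dense-set-meeting extensions inside $N$ can assign wrong values at infinitely many points of $l_\gamma\cap\dom(s_{p_\omega})$, after which no finite $k_\gamma$ repairs the conflict.

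The paper resolves this without $\diamondsuit$ (and explicitly remarks that $\diamondsuit$ is used \emph{only} for bookkeeping). The key observations are: (i) $l_{\delta_\omega}$ has order type $\omega$ and is cofinal in $\delta_\omega=\sup_n\delta_n$, so $l_{\delta_\omega}\cap\delta_{n+1}$ is \emph{finite}, hence the piece $c_{\delta_\omega}\restrict(l_{\delta_\omega}\cap\delta_{n+1}\setminus\delta_n)$ is a finite object and therefore belongs to $M_{n+1}$ even though $c_{\delta_\omega}$ does not; and (ii) the conditions of $\mdl{Q}_E(\vec{l},\vec{c})$ have \emph{initial-segment} domains, so once, at stage $n+1$, one first writes this finite correct piece into $u_{n+1}$ (pushing the domain past $\max(l_{\delta_\omega}\cap\delta_{n+1})$) and only then extends inside $M_{n+1}$ to decide a point of $\dot{C}$, the later extension can never touch $l_{\delta_\omega}$ below $\delta_{n+1}$ again. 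The finitely many points of $l_{\delta_\omega}$ already decided by $p_0$ are absorbed by the ``$=^*$'' in the definition of the forcing. With your arbitrary countable partial domains and the coherence constraint tying $s_p$ to all $\alpha\in A_p$, even this interleaving trick needs care (newly inserted values $c_\gamma(\xi)$ can clash with the commitments made for earlier $\alpha\in A_{p_n}$), so you should either adopt initial-segment domains as in the paper or argue separately that such clashes can be avoided. As written, the argument does not close.
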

\begin{proof}
We shall show that, for any ladder system $\pair{\vec{l},\vec{c}}$ and any stationary set $E \in \stat$, there exists a $\sigma$-closed forcing which forces that $\pair{\vec{l},\vec{c}}$ is uniformized on some stationary set $E' \subset E$.
By iterating such forcing posets with countable support, we obtain a forcing poset that witnesses the lemma. 
$\diamondsuit$ is used only for the bookkeeping argument in this process.

Let $\mdl{Q}_E(\vec{l},\vec{c})$ be the forcing of all pairs $p = \pair{e_p,u_p} \in {}^\alpha 2 \times {}^\alpha \omega$ ($\alpha < \omega_1$) such that \[\forall \gamma < \alpha,\, e_p(\gamma) = 1 \to (\gamma \in E \land c_\gamma =^* u_p \restrict l_\gamma).\]
The order of $\mdl{Q}_E(\vec{l},\vec{c})$ is the coordinatewise reverse inclusion.
It is obvious that $\mdl{Q}_E(\vec{l}, \vec{c})$ is $\sigma$-closed.
Thus, the only thing we have to show is that the working part $\bigcup_{p \in \d{G}}e_p^{\gets}(\{1\})$ of the generic uniformizing function $\bigcup_{p \in \d{G}}u_p$ is a stationary subset of $E$.

Let us assume that $\forcestate[\mdl{Q}_E(\vec{l},\vec{c})]{$\d{e} = \bigcup_{p \in \d{G}}e_p$ and $\d{E} = \d{e}^{\gets}(\{1\})$}$
and then our goal is to show the stationarity of $\d{E}$.
So we shall assume that $p \forcestate{$\d{C} \subset \omega_1$ is a club set}$.
Let $\seq{M_i}{i \leq \omega}$ be a countinuous $\subset$-chain of countable elementary submodels of $H_\theta$ ($\theta$ is large enough regular) that contain $\d{C}$, $p$, etc.
Let $\delta_i = \omega_1 \cap M_i$ for each $i \leq \omega$.
We shall find $q \leq p$ that forces $\delta_\omega\in \d{C} \cap \d{E}$.
To get such condition, we recursively construct decreasing sequence $\seq{p_n \in M_n \cap \mdl{Q}_E(\vec{l},\vec{c})}{n < \omega}$ of conditions such that each domain of $p_{n + 1}$ contains $\delta_n$ and $u_{n + 1}$ extends $c_{\delta_\omega} \restrict (l_{\delta_\omega} \cap \delta_{n+1} \setminus \delta_n)$.
For convenience, let $\pair{e_n, u_n}$ denote $p_n$.
Let $p_0 = p$.
Suppose that $p_n$ is defined.
Let $e_{n + 1} \in M_{n + 1}$ extends $e_n$ with the value zeros and $u_{n + 1} \in M_{n + 1}$ extends $u_n \cup c_{\delta_\omega} \restrict (l_{\delta_\omega} \cap \delta_{n+1} \setminus \delta_n)$.
Then $\pair{e_{n+1},u_{n + 1}} \in M_{n+1}$ is a condition.
Since $\d{C}$ is forced to be unbounded in $\omega_1$, we can select a condition $p_{n + 1} \leq \pair{e_{n+1},u_{n + 1}}$ in $M_{n + 1}$ and $\eta_{n + 1} \in \delta_{n + 1}\setminus \delta_n$ such that $p_{n + 1} \forces{\c{\eta}_{n + 1} \in \d{C}}$.
Now $\seq{p_n}{n < \omega}$ is constructed and then $p_\omega = \pair{\bigcup_{n < \omega}e_n \cup \{\pair{\delta,1}\}, \bigcup_{n < \omega}u_n \cup \{\pair{\delta,0}\}}$ is a required condition.
\end{proof}
Sakai taught me a proof of the following lemma.
\begin{lem}
$\diamondsuit$ implies $\lnot\sigma\mathsf{U}$.
\end{lem}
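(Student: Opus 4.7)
The plan is to construct a ladder system coloring $\pair{\vec l,\vec c}$ with $\nu = 2$ by recursion on $\delta \in \omega_1 \cap \limo$, using a $\diamondsuit$-sequence to diagonalize against every potential $\sigma$-uniformization. Encode a candidate $\sigma$-uniformization as a pair of functions $(F,K) \colon \omega_1 \cap \limo \to \omega\times\omega$, where $F(\alpha)$ names the piece containing $\alpha$ and $K(\alpha)$ is the tail index on that piece. The uniformization hypothesis becomes: whenever $F(\alpha)=F(\beta)$ and $\xi \in l_\alpha^{K(\alpha)}\cap l_\beta^{K(\beta)}$, then $c_\alpha(\xi)=c_\beta(\xi)$. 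A standard coding turns $\diamondsuit$ into a sequence $\seq{(F_\delta,K_\delta)}{\delta \in \omega_1 \cap \limo}$ with $F_\delta,K_\delta \colon \delta \cap \limo \to \omega$ such that, for any $(F,K)$, the set $\{\delta : (F\restrict\delta, K\restrict\delta) = (F_\delta,K_\delta)\}$ is stationary.

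\textbf{Construction at stage $\delta$.} Fix a strictly increasing cofinal sequence $\seq{\delta_k}{k<\omega}$ in $\delta$ and an enumeration $\seq{(n_k,m_k)}{k<\omega}$ of $\omega\times\omega$ in which every pair appears infinitely often, with $k\geq m_k$ for all $k$. For $k<\omega$, let $\Gamma_k = \{\alpha<\delta : F_\delta(\alpha)=n_k\}$. If $\Gamma_k$ is unbounded in $\delta$, choose $\alpha_k \in \Gamma_k$ above $\max(l_{\delta,k-1},\delta_k)$ (interpreting $l_{\delta,-1}=0$), pick $\xi \in l_{\alpha_k}^{K_\delta(\alpha_k)}$ above the same bound (possible since $l_{\alpha_k}^{K_\delta(\alpha_k)}$ is cofinal in $\alpha_k$), set $l_{\delta,k} = \xi$, and put $c_\delta(\xi) = 1 - c_{\alpha_k}(\xi)$. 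If $\Gamma_k$ is bounded in $\delta$, take $l_{\delta,k}$ to be any ordinal above $\max(l_{\delta,k-1},\delta_k)$ with any color. The side condition $l_{\delta,k}>\delta_k$ guarantees that $l_\delta$ is cofinal in $\delta$; since each $l_{\delta,k}$ receives a single color, $c_\delta$ is well defined.

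\textbf{Verification.} Suppose for contradiction that $(F,K)$ witnesses $\sigma$-uniformizability of the constructed $\pair{\vec l,\vec c}$. By $\diamondsuit$, $T = \{\delta : (F\restrict\delta, K\restrict\delta) = (F_\delta,K_\delta)\}$ is stationary; pigeonholing the map $\delta \mapsto (F(\delta),K(\delta))$ on $T$ yields a stationary $T' \subset T$ and a fixed $(n^*,m^*)\in\omega\times\omega$ with $F(\delta)=n^*$, $K(\delta)=m^*$ for every $\delta \in T'$. Then $T' \subset A_{n^*}:=F^{-1}(n^*)$, so $A_{n^*}$ is stationary. Restricting further to the stationary set of $\delta\in T'$ that are limit points of $T'$, the tail $A_{n^*} \cap \delta \supset T' \cap \delta$ is unbounded in $\delta$; hence $\Gamma_k$ is unbounded whenever $n_k=n^*$. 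Pick any such $k$ with $(n_k,m_k)=(n^*,m^*)$; by construction we obtain $\alpha_k \in A_{n^*}$ and $\xi = l_{\delta,k} \in l_{\alpha_k}^{K(\alpha_k)} \cap l_\delta^{m^*}$ with $c_{\alpha_k}(\xi) \neq c_\delta(\xi)$, contradicting uniformization on $A_{n^*}$.

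\textbf{Main obstacle.} The delicate point is that at stage $\delta$ we must simultaneously defeat every pair $(n,m)\in\omega\times\omega$ that might later be declared the value $(F(\delta),K(\delta))$, while still producing a legitimate $\omega$-sequence cofinal in $\delta$. The enumeration trick, combined with the cofinality side condition $l_{\delta,k}>\delta_k$, handles both demands at once. On the verification side, the only nontrivial step beyond a routine pigeonhole is the transfer from ``$T'$ stationary'' to ``$A_{n^*}\cap\delta$ unbounded in $\delta$ for stationarily many $\delta$'', which uses the standard fact that a stationary set meets the club of its own limit points in a stationary set.
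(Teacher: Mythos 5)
Your proof is correct, but it takes a genuinely different route from the paper's. The paper does not prove this lemma directly: it derives it as the special case $T=\omega_1$ of the stronger statement $\diamondsuit\to\lnot\sigma\mathsf{U}_T$ for an Aronszajn tree $T$. In that argument the $\diamondsuit$-sequence guesses a candidate \emph{uniformizing function} $d\colon\omega_1\to 2$; one fixes in advance a partition $\omega_1=\bigcup_n X_n$ into uncountable sets, builds a ladder system whose ladders meet each $X_n$ cofinally, and colors so that $c_\alpha$ disagrees with the guess $d_\alpha$ infinitely often on each $l_\alpha\cap X_n$. A putative $\sigma$-uniformization with pieces $A_n$ and functions $u_n$ is then defeated by guessing the single interleaved function $u=\bigcup_n u_n\restrict X_n$. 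You instead have $\diamondsuit$ guess the \emph{partition data} $(F,K)$ (piece index and tail index), and you build the ladder $l_\delta$ recursively so that each of its points is drawn from the tail $l_{\alpha_k}^{K_\delta(\alpha_k)}$ of some earlier $\alpha_k$ predicted to lie in the same piece, coloring it to conflict directly with $c_{\alpha_k}$. Both arguments are sound; the technical care in yours (the requirement $k\ge m_k$ in the enumeration so that the conflict point lands in the tail $l_\delta^{m^*}$, and the passage to limit points of $T'$ to get $\Gamma_k$ unbounded) is exactly where the work lies, and you handle it correctly. What each approach buys: yours is self-contained and avoids the auxiliary partition $X_n$ and the interleaving trick, arguing purely at the level of pairwise conflicts between ladders; the paper's version, by diagonalizing against the uniformizing function rather than the partition, transfers verbatim to the tree setting $\sigma\mathsf{U}_T$, which is what the separation of $\ma_{\aleph_1}(\sigl)$ from $\ma_{\aleph_1}(\xpc{\stat})$ actually requires, whereas your pairwise-conflict scheme would need nontrivial modification to handle the countably many nodes at each level of $T$.
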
 
We will prove a stronger lemma in \cref{sec:4.3}, so we omit the proof here.
Thus we get an SS-uniformizable ladder system coloring which can not be decompose into countable pieces such that the coloring is non-uniformizable on each piece.

Preservation by $\aleph_1$-$\sigma$-linked forcings also runs as follows.
\begin{lem}\label{lem:SS pres}
Any $\xknas{\stat}$ forcing preserves SS uniformizable ladder system colorings in the ground model.
\end{lem}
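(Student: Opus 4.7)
The plan is by contradiction. Suppose some $p \in \mdl{P}$ forces the existence of a stationary set $\d{E} \subset \omega_1$ admitting no stationary uniformized subset. Working in $V$, put
\[E^* := \{\alpha < \omega_1 : \exists q \leq p,\ q \Vdash \alpha \in \d{E}\}.\]
Since $\xknas{\stat}$ implies ccc, every $V^\mdl{P}$-club contains a ground-model club, so $p \Vdash \d{E}$ stationary forces $E^*$ to meet every $V$-club, making $E^*$ $V$-stationary.

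Next, apply SS-uniformizability of $\pair{\vec{l}, \vec{c}}$ in $V$ to $E^*$ to obtain a $V$-stationary $F \subset E^*$ together with a function $k : F \to \omega$ such that $\bigcup_{\alpha \in F} c_\alpha \restrict l_\alpha^{k(\alpha)}$ is a function. The latter condition is absolute, so $k$ still uniformizes the coloring on every subset of $F$ in $V^\mdl{P}$. It therefore suffices to show $p \Vdash F \cap \d{E}$ is stationary, for then $F \cap \d{E}$ is a stationary uniformized subset of $\d{E}$ in $V^\mdl{P}$, contradicting the choice of $p$.

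Suppose $p$ does not force this. Since $F \cap \d{E}$ is always uniformized by $k$, the hypothesis on $p$ in fact forces $F \cap \d{E}$ to be non-stationary. By the maximal principle, pick a $\mdl{P}$-name $\d{D}$ such that $p$ forces $\d{D}$ to be a club with $\d{D} \cap F \cap \d{E} = \emptyset$; by the standard ccc reflection of clubs, there is a $V$-club $C$ with $\Vdash C \subset \d{D}$, so $p \Vdash F \cap C \cap \d{E} = \emptyset$, which in particular gives $p \Vdash \alpha \notin \d{E}$ for every $\alpha \in F \cap C$. Since $F \cap C$ is $V$-stationary and therefore nonempty, pick $\alpha \in F \cap C \subset E^*$; by the definition of $E^*$ there is $q_\alpha \leq p$ with $q_\alpha \Vdash \alpha \in \d{E}$, contradicting the inherited $q_\alpha \Vdash \alpha \notin \d{E}$. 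The key technical point is this last deduction, which combines the maximal principle with the ccc reflection of clubs; the hypothesis $\xknas{\stat}$ enters only through its ccc consequence.
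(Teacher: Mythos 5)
Your proof is correct and follows essentially the same route as the paper's: both take the ground-model stationary trace $E^*=\{\alpha:\exists q\leq p,\ q\Vdash\alpha\in\d{E}\}$, apply SS-uniformizability in $V$ to get a stationary $F\subseteq E^*$ with a uniformizing function, and then use only the ccc consequence of $\xknas{\stat}$ (via club reflection) to see that the trace of $F$ inside $\d{E}$ is forced to be stationary. The paper packages the last step as ``some $q^*\leq p$ forces $\{\alpha\in F: q_\alpha\in\d{G}\}$ to be stationary'' rather than your contradiction argument about $F\cap\d{E}$, but the two are the same computation.
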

We note that, since any $\aleph_1$-$\sigma$-linked poset is $\xknas{\stat}$, it preserves SS uniformizable ladder system coloring too.
\begin{proof}
Let $\pair{\vec{l},\vec{c}}$ be an SS uniformizable ladder system coloring and $\mdl{P}$ be an $\aleph_1$-$\sigma$-linked forcing.
We assume that $p \forces[]{\d{E} \in \stat}$.
Let $E_0 = \{\alpha < \omega_1 \mid \exists q \leq p,\, q \forcestate[]{$\alpha \in \d{E}$}\}$.
Then $E_0$ is stationary. Fix a sequence $\seq{q_\alpha \leq p}{\alpha \in E_0}$ such that $q_\alpha \forcestate[]{$\c{\alpha} \in \d{E}$}$.
Select a statinoary set $E_1 \subset E_0$ and $f \colon E_1 \to \omega$ such that $\bigcup_{\alpha \in E_1} c_\alpha \restrict l_\alpha^{f(\alpha)}$ is a function.
Since $\mdl{P}$ is ccc, there exists $q^* \leq p$ such that 
\[q^* \forcestate{$\d{E}' = \{\alpha \in \c{E}_1 \mid q_\alpha \in \d{G}_{\mdl{P}}\}$ is a stationary subset of $\d{E}$}.\]
The stationarity is the same as in the proof of \cref{lem:statpc pres}.
For $q^* \forces{\d{E}' \subset \d{E}}$, fix any $q\leq q^*$ and $q \forces{\c{\alpha} \in \d{E}'}$.
Then $q$ and $q_\alpha$ are compatible and their common extension forces $\c{\alpha} \in \d{E}$.
It is clear that $q^* \forcestate[]{$\c{f}$ uniformizes $\pair{\vec{l},\vec{c}}$ on $\d{E}'$}$.
\end{proof}
\begin{lem}\label{lem:nonsigma pres0}
Let $\pair{\vec{l},\vec{c}}$ be a not $\sigma$-uniformizable ladder system coloring.
Any $\aleph_1$-$\sigma$-linked forcing forces that $\pair{\vec{l},\vec{c}}$ is not $\sigma$-uniformizable.
\end{lem}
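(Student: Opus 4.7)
The plan is to argue by contrapositive: assume some condition $p\in\mdl{P}$ forces a $\sigma$-uniformization of $\pair{\vec{l},\vec{c}}$ and produce a ground-model $\sigma$-uniformization. So suppose
\[
p \forcestate{$\bigcup_{n<\omega}\d{A}_n = \omega_1 \cap \limo$ and each $\pair{\vec{l},\vec{c}}\c{}$ is uniformized on $\d{A}_n$ over $\d{k}_n$}.
\]
For each $\alpha \in \omega_1 \cap \limo$, since $\mdl{P}$ is ccc we may choose $p_\alpha \leq p$, $n_\alpha<\omega$ and $k_\alpha<\omega$ with
\[
p_\alpha \forces{\c{\alpha} \in \d{A}_{\c{n}_\alpha} \land \d{k}_{\c{n}_\alpha}(\c{\alpha}) = \c{k}_\alpha}.
\]

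Next I would invoke the $\aleph_1$-$\sigl$ property: decompose $\{p_\alpha \mid \alpha \in \omega_1\cap \limo\} = \bigcup_{m<\omega} \Lambda_m$ with each $\Lambda_m$ linked. Set
\[
B_{n,k,m} := \defset{\alpha \in \omega_1 \cap \limo}{$n_\alpha = n,\,k_\alpha = k,\,p_\alpha\in\Lambda_m$}.
\]
This gives a countable decomposition of $\omega_1 \cap \limo$ indexed by $(n,k,m) \in \omega^3$.

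The key claim is that in the ground model, for every $(n,k,m)$ the coloring $\pair{\vec{l},\vec{c}}$ is uniformized on $B_{n,k,m}$ by the constant function with value $k$, i.e.\ $\bigcup_{\alpha \in B_{n,k,m}} c_\alpha \restrict l_\alpha^{k}$ is a function. For any two $\alpha,\beta \in B_{n,k,m}$, by linkedness there is a common extension $q \leq p_\alpha, p_\beta$, and $q$ forces that both $\alpha$ and $\beta$ lie in $\d{A}_n$ with $\d{k}_n$-value $k$; since $\d{k}_n$ uniformizes on $\d{A}_n$, $q$ forces that $c_\alpha\restrict l_\alpha^{k}$ and $c_\beta\restrict l_\beta^{k}$ agree on the intersection of their domains. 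This is a statement about ground-model objects (integers, elements of $\vec{c}$ and $\vec{l}$), hence absolute, so agreement already holds in the ground model.

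Combining: $\{B_{n,k,m}\}_{(n,k,m)\in\omega^3}$ is a countable decomposition of $\omega_1\cap\limo$ on each piece of which $\pair{\vec{l},\vec{c}}$ is uniformized, contradicting the hypothesis that $\pair{\vec{l},\vec{c}}$ is not $\sigma$-uniformizable. I expect the one subtle point to be the absoluteness step in the key claim — making sure the forced agreement of $c_\alpha$ and $c_\beta$ on the (finite, ground-model) intersection $l_\alpha^{k}\cap l_\beta^{k}$ really does transfer back to $V$ — but since everything in sight is in the ground model, this is routine.
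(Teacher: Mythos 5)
Your proof is correct and is essentially the argument the paper intends: the paper omits a direct proof and instead derives this lemma as the special case $T=\omega_1$ of \cref{lem:nonsigma pres}, whose proof likewise decides $n_\alpha,k_\alpha$ below $p$, covers the deciding conditions by countably many linked sets via $\aleph_1$-$\sigma$-linkedness, and uses linkedness plus absoluteness of agreement of the ground-model functions $c_\alpha\restrict l_\alpha^{k}$ to extract a ground-model $\sigma$-uniformization. The only cosmetic differences are that you argue by contrapositive rather than contradiction and also fix $k$ in the decomposition so each piece is uniformized by a constant function; both are fine.
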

This lemma is a special case of \cref{lem:nonsigma pres} ($T = \omega_1$) in \cref{sec:4.3}.
So we omit the proof here too.

Thus the preservation step is also clear.
However, the following remains open.
\begin{question}
Does $\ma_{\aleph_1}(\xpc{\stat})$ imply that every SS-uniformizable ladder system coloring is $\sigma$-uniformizable?
\end{question}
\subsection{SS Aronszajn trees and $\xpc{\stat}$}
For a $\omega_1$-tree $\pair{T,<_T}$, $\alpha < \omega_1$, and $t,s \in T$,
\begin{itemize}
\item $\height(t)$ is the height of $t$, that is, the order type of $\{s \in T \mid s <_T t\}$,
\item $T_\alpha = \{t \in T \mid \height(t) = \alpha \}$ and $T\restrict X = \{t \in T \mid \height(t) \in X \}$ for $X \subset \omega_1$,
\item $t \restrict \alpha$ denotes the element of $T_{\alpha}$ below $t$,
\item $T$ is Aronszajn if all $T_\alpha$ and chains in $T$ are countable,
\item $T$ is Suslin if all antihcains and chains in $T$ are countable, and
\item $T$ is specialized Aronszajn tree if $T$ is Aronszajn tree that is the union of countably many antichains in $T$.
\end{itemize}
Hanazawa \cite{c61c0038-25ea-3bcf-bd99-5a5572293ef3} defined the following property.
\begin{dfn}
An Aronszajn tree $T$ has the property \emph{UU (SS)} if every uncountable (stationary) subset of $T$ contains an uncountable (statioanry) antichain.
\end{dfn}
Note that any special Aronszajn tree is SS by the pressing down lemma.

A Suslin tree $S$ is an object living in the constructible universe $L$.
To destroy a Suslin tree by forcing method, two strategies can be considered: adding an uncountable chain and adding an uncountable antichain.
The former can be forced by the tree $S$ (ordered by reversed tree order) itself.
The later can be forced by the poset $\finantich(S)$ of all finite non-maximal antichains in the tree $S$ ordered by reverse inclusion.
To add antichains more strongly, the following poset works.
\begin{dfn}[Baumgartner-Malitz-Reinhardt \cite{1361699995564696064}]
Let $T$ be an Aronszajn tree.
Then the poset $\specialize(T):= \{p\in \mathrm{Fn}(T, \omega) \mid \forall n<\omega,\, \text{$p^\gets(\{n\})$ forms an antichain}\}$ ordered by reverse inclusion is called the \emph{specializer of $T$}.
\end{dfn}
\begin{lem}\label{lem:SS tree antichain refining}
Fix a stationary set $E \subset \omega_1$ and an SS Aronszajn tree $T$.
Let $\seq{x_\alpha}{\alpha \in E}$ be a sequence of finite antichains of $T$.
Then there exists a stationary set $E' \subset E$ such that $\bigcup_{\alpha \in E'} x_\alpha$ is an antichain.
\end{lem}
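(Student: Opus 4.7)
My plan combines a $\Delta$-system reduction with iterated applications of the SS property. First fix an injection $\iota\colon T\hookrightarrow\omega_1$ that respects the level stratification, so that $\iota^{\to}(T\restrict\alpha)\subseteq\alpha$ on a club of $\alpha$. Applying Fodor's lemma finitely many times to $\iota^{\to}(x_\alpha)\cap\alpha$ and intersecting with the natural continuity club, I extract a stationary $E_1\subseteq E$ together with a fixed finite antichain $r\subseteq T$ such that $x_\alpha=r\cup y_\alpha$ on $E_1$, the $y_\alpha$'s are pairwise disjoint of constant cardinality $m$, and for $\alpha<\beta$ in $E_1$ every $t\in y_\alpha$ satisfies $\alpha\le\height(t)<\beta$. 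Since $r$ is automatically an antichain (being contained in $x_\alpha$) and is already incomparable with every element of every $y_\alpha$ (inside the antichain $x_\alpha$), it suffices to produce a stationary $E'\subseteq E_1$ with $\bigcup_{\alpha\in E'}y_\alpha$ an antichain.

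Next I reduce the problem to level $T_\alpha$. For $\alpha\in E_1$ and $t\in y_\alpha$, set $s_t:=t\restrict\alpha\in T_\alpha$. The key observation is that any comparability $t<_T t'$ with $t\in y_\alpha$, $t'\in y_\beta$, $\alpha<\beta$ in $E_1$, forces $s_t\le_T s_{t'}$ by restricting to level $\alpha$; so it suffices to find a stationary $E'\subseteq E_1$ making $\{s_t:\alpha\in E',\,t\in y_\alpha\}$ an antichain in $T$. The ambient set $Z:=\{s_t:\alpha\in E_1,\,t\in y_\alpha\}$ has heights containing $E_1$, hence is stationary in $T$, and SS applies: there is an antichain $A\subseteq Z$ with stationary heights. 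At each such height $\alpha$, $A\cap T_\alpha$ is a non-empty subset of the at most $m$ ancestors $\{s_t:t\in y_\alpha\}$, and a finite pigeonhole yields a stationary $E_2\subseteq E_1$ on which this trace coincides with a fixed column pattern $I\subseteq\{1,\dots,m\}$.

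If $I=\{1,\dots,m\}$ the proof concludes. Otherwise I iterate: reapply SS to the residual ancestors in the missing columns, refine again by finite pigeonhole, and combine the resulting antichains; after at most $m$ rounds all columns are exhausted. The main obstacle I anticipate is in combining the successive SS-antichains into a single antichain: across different rounds, cross-column comparabilities at different levels can in principle appear. Controlling these requires exploiting the decoupling provided by the fixed root $r$ from the $\Delta$-system reduction, which ensures that ancestors of distinct $y_\alpha$'s below $\alpha$ are prevented from colliding except through $r$; a further Fodor-type pass on the remaining possible comparabilities, followed by a final pigeonhole, should then yield the desired stationary $E'$.
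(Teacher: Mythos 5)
Your reductions up to and including the reapplications of SS are sound and essentially parallel the paper's: the paper likewise fixes the part of $x_\alpha$ below level $\alpha$, passes to the level-$\alpha$ ancestors $t\restrict\alpha$ (noting that an antichain of ancestors pulls back), normalizes the cardinality to a constant $n$, and uses SS to make each ``column'' $\{t_{\alpha,i}\mid\alpha\in E\}$ individually an antichain. The genuine gap is exactly at the obstacle you flag yourself: the cross-column comparabilities $s_{\alpha,j}<_T s_{\beta,i}$ for $\alpha<\beta$. Your proposed mechanism for controlling them --- ``the decoupling provided by the fixed root $r$'' --- does not do any work here. The root $r$ consists of finitely many fixed nodes of bounded height; the comparabilities in question are between nodes of $T_\alpha$ and $T_\beta$ for unboundedly large $\alpha<\beta$ in $E$, and nothing in the $\Delta$-system reduction constrains whether $s_{\beta,i}\restrict\alpha$ happens to equal $s_{\alpha,j}$. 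Moreover ``a Fodor-type pass on the remaining possible comparabilities'' is not yet a defined move: comparability is a binary relation on $E$, not a regressive function, and as written there is no reason the set of $\alpha<\beta$ witnessing a comparability into $\beta$ should even be bounded below $\beta$.

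The step can be repaired, but it needs an idea you have not supplied. The cheapest fix, which stays within your plan: once every column $j$ is an antichain, for fixed $\beta,i,j$ there is \emph{at most one} $\alpha<\beta$ with $s_{\alpha,j}<_T s_{\beta,i}$ (two such $\alpha$'s would give two distinct, comparable elements of column $j$, both lying below $s_{\beta,i}$). Hence $F(\beta)=\{\alpha\in E^*\cap\beta\mid\exists i,j\,(s_{\alpha,j}<_T s_{\beta,i})\}$ has size at most $m^2$, and since $\max F(\beta)<\beta$, pressing down fixes $F(\beta)=F_0$ on a stationary set; discarding the finite set $F_0$ yields the required $E'$. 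This is what your ``Fodor-type pass plus pigeonhole'' should have been, and the finiteness of $F(\beta)$ --- not the root $r$ --- is what licenses it. The paper takes a different route to the same combination problem: for two columns it splits into cases according to whether some $X_\alpha=\{\beta\mid t_{\alpha,0}<_T t_{\beta,1}\text{ or }t_{\alpha,1}<_T t_{\beta,0}\}$ is stationary (extracting an antichain directly from the tree structure if so, and taking a diagonal intersection of the complements if not), and then handles $n>2$ columns by an induction using the three column sets $\{0,\dots,n-2\}$, $\{1,\dots,n-1\}$, $\{0,n-1\}$, which jointly cover every pair of columns. Either argument closes your gap; as it stands, your final paragraph asserts the conclusion without a mechanism that works.
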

The following proof is a straightforward improvement of the corresponding lemma in \cite{Steprans1988}.
\begin{proof}
Refining $E$ to a stationary set, we may assume that all $x_\alpha \cap T\restrict \alpha$ are equal to each other.
So let us assume that $x_\alpha = \{t_{\alpha,i} \mid i < n_\alpha\}$ and $\height(t_{\alpha,i}) \geq \alpha$.
If $\{t_{\alpha,i} \restrict \alpha \mid \alpha \in E,\, i < n_\alpha\}$ is an antichain, so is $\{t_{\alpha,i} \mid \alpha\in E ,\, i< n_\alpha\}$.
Thus we may assume that $\height(t_{\alpha,i}) = \alpha$ for each $\alpha$.
Furthermore, refining $E$ again, we may assume that $n_\alpha = n$ (fixed).
By SS property of $T$, we can assume that, for each $i < n$, $\{t_{\alpha,i} \mid \alpha \in E\}$ is an antichain.

By induction on $n$, we shall show that, for each $E' \in \stat_E$, there exists $E'' \in \stat_{E'}$ such that $\{t_{\alpha,i} \mid \alpha \in E'',\, i < n\}$ is an antichain.

\begin{description}
\item[The case $n = 1$] It is clear from the SS property of $T$.
\item[The case $n = 2$] Let $E' \in \stat_E$ and, for $\alpha \in E'$,  \[X_\alpha = \{\alpha \in E' \mid t_{\alpha,0} < t_{\beta,1}\, \lor\, t_{\alpha,1} < t_{\beta,0} \}.\] We consider the two cases:
 \begin{description}
 \item[$X_\alpha$ is stationary for an $\alpha \in E'$] Suppose that $X_{\alpha,0} = \{\beta \in E' \mid t_{\alpha,0} < t_{\beta,1}\}$ is stationary.
 Suppose that $\beta < \beta'$ are members of $X_{\alpha,0}$ such that $x_\beta \cup x_{\beta'}$ is not an antichain.
 If $t_{\beta,0} < t_{\beta',1}$, then $t_{\alpha,0}$ and $t_{\beta,0}$ are compatible since both of them are below $t_{\beta',1}$.
 Otherwise, $t_{\alpha,0} < t_{\beta,1} < t_{\beta',0}$, it is impossible.
 We obtain contradictions in both cases.
 Thus, $\bigcup_{\beta \in X_{\alpha,0}} x_\beta$ is an antichain.
 \item[$X_\alpha$ is non-stationary for any $\alpha \in E'$]
 $Y = \Delta_{\alpha \in E'}E' \setminus X_\alpha$ is a stationary set (in fact, club in $E'$).
Fix any $\alpha < \beta$ in $Y$.
Then $\beta \notin X_\alpha$.
Thus $t_{\alpha,0}$ and $t_{\beta,1}$ are incompatible, so are $t_{\alpha,1}$ and $t_{\beta,0}$.
This implies $x_\alpha \cup x_\beta$ is an antichain.
Thus $\bigcup_{\alpha \in Y}x_\alpha$ is an antichain.
 \end{description}
\item[The caes $n > 2$] By induction hypothesis, we have stationary sets $E' \supset E(0) \supset E(1) \supset E(2)$ such that all of $\{t_{\alpha,i} \mid i < n -1,\,\alpha \in E(0)\}$, $\{t_{\alpha,i} \mid 0 < i < n,\,\alpha \in E(1)\}$, and $\{t_{\alpha,i} \mid i \in \{0,n-1\},\,\alpha \in E(2)\}$ are antichains.
Then, $\bigcup_{\alpha \in E(2)}x_\alpha$ is an antichain.
\end{description}
\end{proof}
The correspondence between the UU property of trees and the precaliber $\aleph_1$ property of their specializer was established by Steprans and Watson.
\begin{lem}\cite{Steprans1988}
For an Aronszajn tree $T$, the folowing are equivalent:
\begin{itemize}
\item $T$ is UU.
\item the poset $\finantich(T)$ of all finite antichain is precaliber $\aleph_1$.
\end{itemize}
\end{lem}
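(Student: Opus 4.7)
My plan is to handle each direction separately. The reverse direction $(\Leftarrow)$ is immediate from considering singletons in $\finantich(T)$, whereas the forward direction $(\Rightarrow)$ combines a $\Delta$-system reduction with an uncountable analogue of \cref{lem:SS tree antichain refining}.

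For $(\Leftarrow)$, suppose $\finantich(T)$ has precaliber $\aleph_1$ and let $X = \{x_\alpha : \alpha < \omega_1\} \subset T$ be uncountable. Viewing $\seq{\{x_\alpha\}}{\alpha < \omega_1}$ as a sequence of singleton conditions in $\finantich(T)$, precaliber yields an uncountable centered refinement on some index set $I$. Every finite union in a centered family in $\finantich(T)$ is a condition, i.e., an antichain in $T$, so $\{x_\alpha : \alpha \in I\}$ is an uncountable antichain contained in $X$.

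For $(\Rightarrow)$, fix a sequence $\seq{p_\alpha}{\alpha<\omega_1}$ in $\finantich(T)$. By the $\Delta$-system lemma and pigeonhole, pass to an uncountable $J \subset \omega_1$ on which $p_\alpha = r \cup s_\alpha$, with $r$ a fixed finite root and the $s_\alpha$ pairwise disjoint of common size $n$. Because each $p_\alpha$ is already an antichain, no element of $s_\alpha$ is comparable with any element of $r$, so $r \cup \bigcup_{\alpha \in F} s_\alpha$ is an antichain iff $\bigcup_{\alpha \in F} s_\alpha$ is. The task therefore reduces to the following uncountable refinement claim: given a sequence $\seq{s_\alpha}{\alpha < \omega_1}$ of pairwise disjoint finite antichains of $T$ of common size $n$, there is an uncountable $I \subset \omega_1$ with $\bigcup_{\alpha \in I} s_\alpha$ an antichain. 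Enumerate $s_\alpha = \{t_{\alpha,0}, \dots, t_{\alpha,n-1}\}$ arbitrarily; since the $s_\alpha$ are pairwise disjoint, $\{t_{\alpha,i} : \alpha < \omega_1\}$ is uncountable for each $i < n$, so applying UU coordinatewise yields an uncountable $I_0$ on which $\{t_{\alpha,i} : \alpha \in I_0\}$ is an antichain for every $i$. The remainder is by induction on $n$: the case $n = 1$ is immediate, and the case $n > 2$ reduces to $n = 2$ and $n - 1$ by the same overlapping-three-families partition of $\{0, \dots, n-1\}$ used in the proof of \cref{lem:SS tree antichain refining}.

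The heart of the argument, and the main obstacle, is the $n = 2$ step, where the SS proof invokes Fodor's lemma and a diagonal intersection that must be replaced with tools available in the merely uncountable setting. For $\alpha \in I_0$ define $X_\alpha = \{\beta \in I_0 : t_{\alpha,0}\text{ is comparable with }t_{\beta,1}\text{ or } t_{\alpha,1}\text{ is comparable with } t_{\beta,0}\}$; this is symmetric in $\alpha, \beta$, and $\alpha \notin X_\alpha$ because $s_\alpha$ is an antichain. If some $X_\alpha$ is uncountable, one of the two disjuncts, say $X_{\alpha,0} = \{\beta : t_{\alpha,0}\text{ comparable with } t_{\beta,1}\}$, is uncountable; for any two $\beta, \beta' \in X_{\alpha,0}$, comparability of $t_{\beta,0}$ with $t_{\beta',1}$ (or of $t_{\beta,1}$ with $t_{\beta',0}$) would, via $t_{\alpha,0}$ as a witness below both $t_{\beta,1}$ and $t_{\beta',1}$, force two comparable elements in $\{t_{\gamma,0} : \gamma \in I_0\}$, contradicting the coordinatewise antichain property established earlier. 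If instead every $X_\alpha$ is countable, build an uncountable $Y \subset I_0$ by transfinite recursion of length $\omega_1$, choosing at stage $\xi$ a point $\gamma_\xi \in I_0 \setminus (\{\gamma_\eta : \eta < \xi\} \cup \bigcup_{\eta < \xi} X_{\gamma_\eta})$; this substitute for the diagonal intersection is legitimate because the forbidden set is countable at every stage, and by the symmetry of $X_\alpha$ we have $\gamma_{\xi_1} \notin X_{\gamma_{\xi_2}}$ whenever $\xi_1 \neq \xi_2$, which together with the coordinatewise antichain property forces $\bigcup_{\alpha \in Y} s_\alpha$ to be an antichain.
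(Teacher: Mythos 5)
The paper itself gives no proof of this lemma (it is cited from Steprans--Watson); the closest in-paper argument is the proof of \cref{lem:SS tree antichain refining}, and your argument is precisely its uncountable analogue, with the $\Delta$-system lemma and pigeonhole replacing the pressing-down reductions and the transfinite free-set recursion replacing the diagonal intersection. Your proof is correct, including the easy converse via singletons. The one compressed step is in the $n=2$ case when some $X_\alpha$ is uncountable: you assert that $t_{\alpha,0}$ lies \emph{below} both $t_{\beta,1}$ and $t_{\beta',1}$, whereas your definition of $X_{\alpha,0}$ only gives comparability; this is harmless, because if some $t_{\beta',1}$ were below $t_{\alpha,0}$ then any other $t_{\beta,1}$ comparable with $t_{\alpha,0}$ would be comparable with $t_{\beta',1}$ (both would lie on the chain below $t_{\alpha,0}$, or one would lie above it), contradicting the coordinatewise antichain property already arranged on $I_0$ --- so in fact $t_{\alpha,0}<t_{\beta,1}$ for every $\beta\in X_{\alpha,0}$ once $X_{\alpha,0}$ has at least two elements.
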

The analogue of this lemma for stationary sets immediate from \cref{lem:SS tree antichain refining}.
\begin{lem}
For an Aronszajn tree $T$, the following are equivalent:
\begin{condenum}
\item $T$ is SS.
\item $\finantich(T)$ is $\xpc{\stat}$.
\item $\specialize(T)$ is $\xpc{\stat}$
\end{condenum}
\end{lem}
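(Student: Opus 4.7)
The plan is to prove the cycle $(1)\Rightarrow(3)\Rightarrow(2)\Rightarrow(1)$, with the substantive content in the first implication, where \cref{lem:SS tree antichain refining} supplies the key step.

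For $(1)\Rightarrow(3)$, assume $T$ is SS, fix $S \in \stat$ and a sequence $\seq{p_\alpha}{\alpha \in S}$ in $\specialize(T)$. After injecting $T$ into $\omega_1$ (possible since $T$ is Aronszajn), I would run the standard $\Delta$-system and pressing-down routine in the spirit of the limit step of \cref{lem:statpc pres}, obtaining a stationary $S_0 \subset S$ on which the domains $\dom(p_\alpha)$ form a $\Delta$-system with fixed root $a$, $p_\alpha \restrict a = p^*$ is constant, and the multiset of colours used by $p_\alpha$ on $\dom(p_\alpha)\setminus a$ is a fixed pattern (countably many patterns), say colours $n_1,\dots,n_k$ with fixed multiplicities. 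Set $y_{\alpha,i} := p_\alpha^{-1}(\{n_i\}) \setminus a$, a finite antichain of $T$. Invoke \cref{lem:SS tree antichain refining} once for each $i \leq k$ to refine down to a stationary $S' \subset S_0$ on which $\bigcup_{\alpha \in S'} y_{\alpha,i}$ is an antichain for every $i \leq k$. For any finite $F \subset S'$ and colour $n = n_i$, the class $\bigl(\bigcup_{\alpha \in F} p_\alpha\bigr)^{-1}(\{n\})$ equals $(p^*)^{-1}(\{n\}) \cup \bigcup_{\alpha \in F} y_{\alpha,i}$; since each $p_\alpha^{-1}(\{n\}) = (p^*)^{-1}(\{n\}) \cup y_{\alpha,i}$ is already an antichain, the two parts are cross-incomparable in $T$, so the union is an antichain. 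Hence $\bigcup_{\alpha \in F} p_\alpha \in \specialize(T)$ and $\seq{p_\alpha}{\alpha \in S'}$ is centred.

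The implication $(3)\Rightarrow(2)$ is immediate from the assignment $A \mapsto c_A$, where $c_A \colon A \to \omega$ is the constant-$0$ function, which is clearly a condition in $\specialize(T)$: given a sequence $\seq{A_\alpha}{\alpha \in S}$ in $\finantich(T)$, apply $\xpc{\stat}$ of $\specialize(T)$ to $\seq{c_{A_\alpha}}{\alpha \in S}$ to obtain a stationary $S'$ on which the images are centred; for any finite $F \subset S'$, the colour-$0$ class of $\bigcup_{\alpha \in F} c_{A_\alpha}$ is precisely $\bigcup_{\alpha \in F} A_\alpha$, which is therefore a finite antichain and hence a common extension in $\finantich(T)$. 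For $(2)\Rightarrow(1)$, given a stationary $S \subset T$, take $E = \{\height(s) \mid s \in S\} \in \stat$, pick $s_\alpha \in S \cap T_\alpha$ for each $\alpha \in E$, and apply $\xpc{\stat}$ of $\finantich(T)$ to the singleton antichains $\{s_\alpha\}$; any stationary centred refinement $E' \subset E$ produces $\{s_\alpha \mid \alpha \in E'\} \subset S$ as an antichain with stationary height set $E'$.

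The only real work lies in $(1)\Rightarrow(3)$, and the potential obstacle there is bookkeeping: one must stabilise the colour pattern on $\dom(p_\alpha) \setminus a$ \emph{before} \cref{lem:SS tree antichain refining} can be applied, and then iterate that lemma finitely many times (once per colour $n_i$). Since $k$ is finite and each invocation preserves stationarity, the final $S'$ is genuinely stationary, and the cross-incomparability observation above is exactly what promotes "each colour class glued over $S'$ is an antichain'' to "any finite union of the $p_\alpha$ is a condition of $\specialize(T)$''.
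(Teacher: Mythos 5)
Your proof is correct and takes the route the paper intends: the paper gives no written argument beyond declaring the lemma "immediate from \cref{lem:SS tree antichain refining}", and your $(1)\Rightarrow(3)$ is exactly the natural unpacking of that claim (stabilise root and colour pattern, then apply the refinement lemma once per colour), while $(3)\Rightarrow(2)\Rightarrow(1)$ are the routine implications. The only point worth flagging is the pedantic one that the paper defines $\finantich(T)$ via \emph{non-maximal} finite antichains, so in $(3)\Rightarrow(2)$ one should note that the finite unions $\bigcup_{\alpha\in F}A_\alpha$ obtained are non-maximal (or read the poset, as the Steprans--Watson citation in the paper does, as all finite antichains).
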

\begin{lem}\label{al1 sigl pres nonsp}
$\aleph_1$-$\sigma$-linked posets preserve the non-speciality of Aronszajn trees. 
\end{lem}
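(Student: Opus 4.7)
The plan is to argue the contrapositive: if $\mdl{P}$ is $\aleph_1$-$\sigl$, $T \in V$ is an Aronszajn tree, and some condition $p \in \mdl{P}$ forces $T$ to be special via a name $\d{f}$ for a function $T \to \omega$ that is injective on chains, then $T$ is already special in $V$. For each $t \in T$, I would use density below $p$ to pick $p_t \leq p$ and $n_t < \omega$ with $p_t \Vdash \d{f}(\c{t}) = \c{n}_t$.

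Since $|T| = \aleph_1$, the $\aleph_1$-$\sigl$ property applied to the sequence $\seq{p_t}{t \in T}$ (reindexed by $\omega_1$ via any bijection) yields a decomposition $\{p_t \mid t \in T\} = \bigcup_{k < \omega}\Lambda_k$ into countably many linked sets. Setting $T_{k,m} = \{t \in T \mid p_t \in \Lambda_k \land n_t = m\}$ produces a countable cover of $T$. Each $T_{k,m}$ is an antichain in $T$: if $s <_T t$ both lay in $T_{k,m}$, then $p_s, p_t \in \Lambda_k$ would be compatible, and any common extension $q$ would satisfy $q \Vdash \d{f}(\c{s}) = \c{m} = \d{f}(\c{t})$, contradicting that $\d{f}$ is forced to be injective on chains below $p \geq q$. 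Thus $T = \bigcup_{k,m < \omega} T_{k,m}$ is a countable union of antichains in $V$, so $T$ is special in $V$, contrary to hypothesis.

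The combinatorial heart of the argument is the linked-hence-compatible step, which is routine once the linked decomposition is invoked; everything else is bookkeeping. The only minor subtlety I anticipate is the contrapositive form: from ``no $p \in \mdl{P}$ forces $T$ to be special'' we must infer that $\mdl{P}$ actually forces $T$ to remain non-special in $V[\d{G}]$. This is immediate because, under that hypothesis, for every $p$ there is some $q \leq p$ forcing non-speciality, so the set of such conditions is dense and hence every generic filter meets it. I do not expect any serious obstacle beyond executing this compatibility argument cleanly.
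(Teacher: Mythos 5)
Your proposal is correct and follows essentially the same route as the paper's proof: decide the specializing data below each condition, apply the $\aleph_1$-$\sigma$-linked decomposition to the chosen conditions, and cross with the decided values to obtain a countable cover of $T$ by antichains in $V$. The only cosmetic difference is that you phrase speciality via a specializing function injective on chains while the paper uses a countable union of antichains; the compatibility argument is identical.
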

\begin{proof}
Let $\mdl{P}$ be a $\aleph_1$-$\sigl$ poset and $T = \omega_1$ be an Aronszajn tree.
Suppose that \[p \forcestate{$T$ is special}.\]
Then we have a sequence $\seq{\d{A}_n \in V^\mdl{P}}{n < \omega}$ of names such that \[p \forcestate{$T = \bigcup_{n < \omega}\d{A}_n$ and $\d{A}_n$ is an antichain in $T$}.\]
Let $\seq{p_\alpha \leq p,\,n_\alpha < \omega}{\alpha < \omega_1}$ be a sequence such that
\[p_\alpha \forces{\c{\alpha} \in \d{A}_{n_\alpha}}\]
Since $\mdl{P}$ is $\aleph_1$-$\sigl$, there exists a decomposition $\{p_\alpha \mid \alpha \in \omega_1\} = \bigcup_{n < \omega}X_n$ where each $X_n$ is linked.
Let $B_{m,n} = \{\alpha \in T \mid n = n_\alpha\, \land\,p_\alpha \in X_m  \}$.
Then, $T = \bigcup_{m,n}B_{m,n}$ and each $B_{m,n}$ is an antichain.
\end{proof}

\begin{lem}\label{statk pres sarp}
$\stat$-(K) posets preserve SS property of Aronszajn trees.
\end{lem}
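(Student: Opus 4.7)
The plan is to mirror the proof of \cref{lem:SS pres}, replacing the ``ladder-system uniformizing function'' by a ``ground-model antichain in $T$''.

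Fix $p \in \mdl{P}$ and a $\mdl{P}$-name $\dot{X}$ with $p \forcestate{$\dot{X}$ is a stationary subset of $T$}$. Since $\mdl{P}$ is ccc, I would first argue that $E_0 := \{\alpha \in T \mid \exists q \leq p,\ q \forces{\alpha \in \dot{X}}\}$ is stationary (otherwise a disjoint club would contradict $p \forcestate{$\dot{X}$ is stationary}$), and then pick $q_\alpha \leq p$ with $q_\alpha \forces{\alpha \in \dot{X}}$ for each $\alpha \in E_0$.

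Next, since $T$ is SS in $V$ and $E_0$ is a stationary subset of $T$, apply the SS property in the ground model to extract a stationary antichain $A_0 \subset E_0$. Then invoke $\stat$-Knaster of $\mdl{P}$ on $\seq{q_\alpha}{\alpha \in A_0}$ to obtain a stationary $A_1 \subset A_0$ with $\seq{q_\alpha}{\alpha \in A_1}$ linked. Being a subset of $A_0$, $A_1$ remains an antichain in $T$.

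Finally, as in \cref{lem:SS pres}, I would use ccc to produce $q^* \leq p$ forcing $\dot{A} := \{\alpha \in A_1 \mid q_\alpha \in \dot{G}\}$ to be stationary; the stationarity argument is the same one recorded inside the proof of \cref{lem:statpc pres}. By the choice of the $q_\alpha$, automatically $q^* \forces{\dot{A} \subset \dot{X}}$, and since $A_1$ is a ground-model antichain, $\dot{A}$ is forced to be an antichain in $T$ as well. Thus $q^*$ forces that $\dot{X}$ contains a stationary antichain; by density of such $q^*$ below $p$, the original $p$ already forces this, establishing SS of $T$ in $V^\mdl{P}$.

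The main obstacle is the sequencing: SS of $T$ must be applied in $V$ to the ground-model set $E_0$ before $\stat$-Knaster is invoked, so that the resulting antichain $A_1$ is a ground-model object (hence absolute) and survives into the extension. As a by-product, the SS property in $V^\mdl{P}$ automatically rules out uncountable branches of $T$ there (a branch contains no two-element antichain), so ``$T$ remains Aronszajn'' needs no separate verification, though it would also follow directly from $\stat$-Knaster via a standard chain-compatibility argument.
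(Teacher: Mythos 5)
Your proof is correct and follows essentially the same route as the paper's: extract a ground-model stationary family of nodes that conditions below $p$ decide into the name, apply the SS property in $V$ to obtain a ground-model stationary antichain, and then use the ccc to find a single condition forcing stationarily many of the witnessing conditions into the generic filter, so that the (absolute) antichain survives inside $\dot{X}$. The only divergence is your intermediate refinement to a linked subfamily via $\stat$-Knaster, which is never actually used afterwards --- the paper's argument, like the rest of yours, needs only the ccc together with absoluteness of incomparability in $T$.
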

\begin{proof}
Suppose that $T$ is an SS Aronszajn tree.
Fix any $\d{S}\in V^\mdl{P}$ and $p \in \mdl{P}$ such that $p \forces{\d{S} \in \stat(\c{T}) }$.
Let $p\forces{\d{E}\in\stat \land \{\d{t}_\alpha \in T_{\alpha} \mid \alpha \in \d{E}\} \subset \d{S}}$.
Let $E_0 = \{\alpha \in \omega_1 \mid \exists q \leq p,\,\exists t \in T_{\alpha},\, q \forces{\alpha \in \d{E} \land \c{t} =\d{t}_\alpha}\}$.
Then $E_0$ is stationary.
Fix $\seq{q_\alpha \leq p,\,t_\alpha \in T}{\alpha \in E_0}$ such that 
\begin{align}
q_\alpha \forces{\c{\alpha}\in\d{E} \land \c{t}_\alpha = \d{t}_\alpha}.\label{a}
\end{align}
Select a stationary set $E_0' \subset E_0$ such that $\{t_\alpha \mid \alpha \in E'_0\}$ is an antichain.
Since $\mdl{P}$ is ccc, there exists $q^* \leq p$ such that
\[q^* \forcestate{$\d{E}' = \{\alpha \in \c{E}'_0 \mid q_\alpha \in \d{G}_{\mdl{P}}\}$ is a stationary subset of $\d{E}$}\]
The stationarity is the same as in the proof of \cref{lem:statpc pres}. 
For $q^* \forces{\d{E}' \subset \d{E}}$, fix any $q\leq q^*$ and $q \forces{\c{\alpha} \in \d{E}'}$.
Then $q$ and $q_\alpha$ are compatible and their common extension forces $\c{\alpha} \in \d{E}$.
Thus we obtain \cref{a}.
\begin{description}
\item[Claim]$q^* \forcestate{$\{\d{t}_\alpha \mid \alpha \in \d{E}'\}$ is a stationary antichain}$.
	\begin{description}
	\item[proof]Stationarity is clear.
	Suppose that $q\leq q^*$, $\alpha,\beta \in E'_0$ and \[q \forcestate{$\alpha,\beta \in \d{E}'$ and $\d{t}_\alpha$ and $\d{t}_\beta$ are compatible}.\]
	Then $q\forces{\exists r \in \d{G}_\mdl{P},\, r \leq \c{q}_\alpha,\c{q}_\beta}$.
	Thus there exists $r \leq q_\alpha,q_\beta$ such that $r \forcestate{$\d{t}_\alpha = \c{t}_\alpha$ and $\d{t}_\beta = \c{t}_\beta$ are compatible}$.
	This contradicts the choice of $E'_0$.
	\end{description}
\end{description}
\end{proof}
Thus, we have the following:
\begin{itemize}
\item $\ma_{\aleph_1}(\xpc{\stat})$ implies that every SS Aronszajn tree is special.
\item Every $\aleph_1$-$\sigma$-linked forcing preserves every SS but non-specail Aronszajn trees.
\end{itemize}
In particular, we have the following.
\begin{prop}
If a Hanazawa tree exists and $2^{\aleph_1} = \aleph_2$, then an $\aleph_1$-$\sigma$-linked forcing forces $\ma_{\aleph_1}(\sigl) + \lnot \ma_{\aleph_1}(\xpc{\stat})$.
\end{prop}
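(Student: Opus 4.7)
The plan is to perform a finite-support iteration $\mathbb{P}_{\omega_2}$ of length $\omega_2$ of $\aleph_1$-$\sigma$-linked posets of size $\leq\aleph_1$, using the hypothesis $2^{\aleph_1}=\aleph_2$ to run the standard bookkeeping that anticipates every $\aleph_1$-$\sigl$ poset of size $\leq\aleph_1$ arising at any intermediate stage together with every $\aleph_1$-sized family of its dense subsets. By the equivalence $\ma_{\aleph_1}(\sigl)\liff\ma_{\aleph_1}(\aleph_1\text{-}\sigl)$ established earlier, which reduces an arbitrary $\sigma$-linked poset to an $\aleph_1$-$\sigl$ poset of size $\aleph_1$ via an elementary submodel of size $\aleph_1$, this iteration will force $\ma_{\aleph_1}(\sigl)$ in the extension $V^{\mathbb{P}_{\omega_2}}$.

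Next I would verify that $\mathbb{P}_{\omega_2}$ is itself $\aleph_1$-$\sigma$-linked; this is an immediate application of the finite-support-iteration lemma for $\aleph_1$-$\sigl$ proved earlier. Since $\aleph_1$-$\sigl$ is stronger than $\stat$-$\mathrm{(K)}$ (as noted in the paper, by $\sigma$-completeness of the non-stationary ideal on $\omega_1$ applied to the countable partition into linked pieces), \cref{statk pres sarp} tells us that $\mathbb{P}_{\omega_2}$ preserves the SS property of any Aronszajn tree from the ground model, while \cref{al1 sigl pres nonsp} tells us that it preserves non-speciality. Fixing a Hanazawa tree $T$ in the ground model, i.e., an SS Aronszajn tree which is not special, these two facts together ensure that $T$ remains a Hanazawa tree in $V^{\mathbb{P}_{\omega_2}}$.

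To conclude $\lnot\ma_{\aleph_1}(\xpc{\stat})$ in the extension, note that by the equivalence between the SS property of $T$ and the $\xpc{\stat}$ property of $\specialize(T)$, the specializer $\specialize(T)$ is $\xpc{\stat}$ in $V^{\mathbb{P}_{\omega_2}}$. For each $t\in T$, the set $D_t=\{p\in\specialize(T)\mid t\in\dom p\}$ is dense, and applying $\ma_{\aleph_1}(\xpc{\stat})$ to the $\aleph_1$-sized family $\{D_t\mid t\in T\}$ would produce a function specializing $T$, contradicting the non-speciality of $T$ in the extension. Hence $\ma_{\aleph_1}(\xpc{\stat})$ fails in $V^{\mathbb{P}_{\omega_2}}$.

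The main obstacle I anticipate is the bookkeeping: under $2^{\aleph_1}=\aleph_2$ one must enumerate in $\omega_2$ stages all the $\aleph_1$-$\sigl$ posets of size $\leq\aleph_1$ that will appear in intermediate models, together with their $\aleph_1$-sized families of dense sets, while keeping $|\mathbb{P}_{\omega_2}|\leq\aleph_2$ and ensuring that no relevant poset is missed. This is a routine Solovay--Tennenbaum style argument, but some care is required because $\aleph_1$-$\sigl$ forcings can add reals, so one must confirm that the reflection from $\sigl$ to $\aleph_1$-$\sigl$ goes through uniformly in every intermediate model, not only in the final extension.
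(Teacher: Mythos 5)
Your proposal is correct and follows essentially the same route as the paper, which derives this proposition directly from the two facts stated immediately before it: $\aleph_1$-$\sigma$-linked iterations (closed under finite support iteration, preserving both the SS property via \cref{statk pres sarp} and non-speciality via \cref{al1 sigl pres nonsp}) keep the Hanazawa tree a Hanazawa tree, while $\ma_{\aleph_1}(\xpc{\stat})$ would specialize it through $\specialize(T)$. The bookkeeping and the reduction of $\sigl$ to $\aleph_1$-$\sigl$ posets of size $\aleph_1$ that you flag are exactly the standard Solovay--Tennenbaum details the paper leaves implicit.
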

However, the existence of an SS and non-special Aronszajn tree remains open.
With the reader's indulgence, we propose the following terminology.
\begin{dfn}
A \emph{Hanazawa tree} $T$ is an SS Aronszajn tree that is not special.
\end{dfn}
\begin{question}[Hanazawa \cite{c61c0038-25ea-3bcf-bd99-5a5572293ef3}]
Does a Hanazawa tree exists?
\end{question}
\subsection{Uniformization on trees and the separation of $\ma_{\aleph_1}(\sigl)$ from $\ma_{\aleph_1}(\xpc{\stat})$}\label{sec:4.3}
In this subsection, we give a proof of the separation $\ma_{\aleph_1}(\sigl)$ from $\ma_{\aleph_1}(\xpc{\stat})$.
\begin{thm}\label{thm:distinguish sigl and statpc}
$\ma_{\aleph_1}(\sigl)$ does not imply $\ma_{\aleph_1}(\xpc{\stat})$.
\end{thm}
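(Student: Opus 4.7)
The plan follows the three-step strategy outlined at the start of Section 4: construct a combinatorial ``witness'' that has the SS-refinement property but is not countably decomposable along a suitable auxiliary structure; show that $\aleph_1$-$\sigl$ forcings preserve both features; conclude that after iterating such forcings to get $\ma_{\aleph_1}(\sigl)$, the witness contradicts $\ma_{\aleph_1}(\xpc{\stat})$. The structure of choice is a ladder system coloring $\pair{\vec{l},\vec{c}}$ coupled with a fixed special Aronszajn tree $T$, and the key refined notion is ``$\sigma$-$T$-uniformization'': existence of a specialization $T=\bigcup_{n}A_n$ on each antichain of which the coloring is uniformizable.

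Starting from a ground model of $\diamondsuit$, I would first iterate the $\sigma$-closed forcings $\mdl{Q}_E(\vec{l},\vec{c})$ with countable support so that every ladder system coloring becomes SS-uniformizable. Since $\sigma$-closed forcing preserves $\diamondsuit$, the strengthened ``$\diamondsuit \Rightarrow \lnot\sigma\mathsf{U}$''-style result announced for the forthcoming \cref{lem:nonsigma pres} yields a specific ladder system coloring $\pair{\vec{l},\vec{c}}$ that is not $\sigma$-$T$-uniformizable. I would then run the standard finite-support $\omega_2$-iteration of $\aleph_1$-$\sigl$ posets of size $\leq \aleph_1$ to force $\ma_{\aleph_1}(\sigl)$; this iteration is $\aleph_1$-$\sigl$, hence ccc, by the preservation lemma for $\aleph_1$-$\sigl$. \cref{lem:SS pres} preserves SS-uniformizability of $\pair{\vec{l},\vec{c}}$ at every successor stage, and \cref{lem:nonsigma pres} (applied to $T$) preserves non-$\sigma$-$T$-uniformizability; a routine finite-support limit-stage argument lifts these preservations to the whole iteration.

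The main obstacle is the final step: in the resulting model $V_1$, exhibit a $\xpc{\stat}$ poset whose generic would deliver $\sigma$-$T$-uniformization of $\pair{\vec{l},\vec{c}}$, so that $\ma_{\aleph_1}(\xpc{\stat})$ would contradict the preserved non-$\sigma$-$T$-uniformizability. The natural candidate is a coupled forcing whose conditions specify a finite fragment of a specialization of $T$ together with a partial uniformizer that is consistent antichain-by-antichain, built in analogy with a composition of $\specialize(T)$ and $\mdl{P}_E(\vec{l},\vec{c})$. Verifying the $\xpc{\stat}$ property combines the $\xpc{\stat}$-ness of $\specialize(T)$ for the SS tree $T$ (every special tree is SS) with the delta-system and pressing-down argument of \cref{ULC(E) is statEc}. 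The delicate point\,---\,and where the hybrid structure earns its keep\,---\,is that the stationary refinement of the index set must simultaneously handle uniformizer-compatibility for $\pair{\vec{l},\vec{c}}$ and antichain-disjointness for $T$, which forces a double application of the pressing-down lemma and a careful coordination of the $T$-labels with the ``fixed tail'' decomposition of ladder colorings.
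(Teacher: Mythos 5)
Your first two paragraphs reproduce the paper's setup exactly: start from $\diamondsuit$, use \cref{thm:diamond + stat unif} to get a model of $\forall E\in\stat,\,\mathsf{U}(\stat_E)$ together with a ladder system coloring that is not $\sigma$-$T$-uniformizable for a special (hence SS) Aronszajn tree $T$, then iterate $\aleph_1$-$\sigl$ posets to force $\ma_{\aleph_1}(\sigl)$ while \cref{lem:SS pres} and \cref{lem:nonsigma pres} preserve the witness. The problem is the final step, which you correctly identify as ``the main obstacle'' and then do not close. You propose a coupled poset, in the spirit of $\specialize(T) * \mdl{P}_E(\vec{l},\vec{c})$, whose conditions carry both a finite specialization fragment and a per-antichain uniformizer, and you defer the verification that it is $\xpc{\stat}$ to ``a double application of the pressing-down lemma and a careful coordination.'' That verification is precisely where all the content of the theorem lives; as written, the proposal assumes the conclusion of the one lemma that is actually hard.

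The paper closes this step differently and more simply: no specialization component is needed at all. It uses the poset $\mathbb{P}_T(\vec{l},\vec{c})$ of finite partial functions $p\colon T\restrict\limo\to\omega$ such that $c^p=\bigcup_{t\in\dom(p)}c^T_{\height(t)}\restrict[t\restrict l_{\height(t),p(t)},t)$ is a function; its generic is a \emph{full}-$T$-uniformization, which yields $\sigma$-$T$-uniformizability with the trivial one-piece decomposition, already contradicting the preserved $\lnot\sigma\mathsf{U}_T$. The proof that $\mathbb{P}_T(\vec{l},\vec{c})$ is $\xpc{\stat}$ is exactly the coordination you gestured at: each condition is split into its part below level $\alpha$, at level $\alpha$, and above level $\alpha$; pressing-down and a delta-system fix the below-$\alpha$ and bounded data; the SS-uniformizability of $\pair{\vec{l},\vec{c}}$ makes the tail colorings cohere on a stationary index set; and \cref{lem:SS tree antichain refining} (the SS property of $T$) refines further so that the level-$\alpha$ nodes together with the $\alpha$-restrictions of the above-$\alpha$ nodes form an antichain, killing all remaining collisions. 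If you want to salvage your coupled poset instead, you would have to carry out an analogous case analysis with the extra bookkeeping of color classes; nothing you wrote establishes that this succeeds, so as it stands the argument has a genuine gap at its decisive point.
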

The following is a variant of the $T$-uniformiation (\cite{MooreTunif}).
\begin{dfn} 
Let $T$ be an $\aleph_1$-tree and $\pair{\vec{l},\vec{c}}$ be a ladder system.
Then, a function $u \colon S \to \omega$ on a subtree $S$ of $T$ is a \emph{$T$-uniformization} if $\forall \alpha \in \omega_1 \cap \limo,\, \forall s \in S_\alpha,\, \forall^\infty \gamma \in l_\alpha,\, u(s \restrict \gamma) = c_\alpha(\gamma)$.
In case $S = T$, such $u$ is called a \emph{full-$T$-uniformization}.
If a (full-)$T$-uniformization exists, then we say that the ladder system coloring is \emph{(full-)$T$-uniformizable}.
For a parical function $d\colon \omega_1 \to \omega$, let $d^T \colon T \to \omega$ such that $d^T(t) = d(\height(t))$
We say $\pair{\vec{l},\vec{c}}$ is \emph{$\sigma$-$T$-uniformizable} if, there exists a countable decomposition $\bigcup_{n < \omega}A_n = T \restrict \limo$ and $f \colon T\restrict \limo \to \omega$ such that $\bigcup_{t \in A_n}c^T_\alpha\restrict [t\restrict l_{\height(t),f(t)},t)$ is a function.
$\sigma\mathsf{U}_T$ is the assertion that every ladder system coloring is $\sigma$-$T$-uniformizable.
\end{dfn}

\begin{lem}
Let $T$ be an Aronszajn tree.
Then $\diamondsuit$ implies $\lnot\sigma\mathsf{U}_T$.
\end{lem}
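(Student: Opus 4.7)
The plan is to adapt the Devlin--Shelah argument that $\diamondsuit \Rightarrow \lnot\mathsf{U}$, now diagonalizing directly against the piece-functions $h_n$ of a putative $\sigma$-$T$-uniformization. Fix any ladder system $\vec{l}$ on $\omega_1 \cap \limo$. Since $T$ is Aronszajn, $T \restrict \alpha$ is countable for every $\alpha < \omega_1$, so via level-wise bijections $T \restrict \alpha \leftrightarrow \alpha$ one can code a countable sequence $\seq{h_n}{n < \omega}$ of partial functions $h_n \colon T \restrict \alpha \partto \omega$ as a subset of $\alpha$. Applying $\diamondsuit$ in this coding, fix a guessing sequence $\seq{\seq{h_n^\alpha}{n < \omega}}{\alpha \in \omega_1 \cap \limo}$ such that for every $\seq{h_n}{n < \omega}$ of partial functions on $T$, the set $\{\alpha : h_n^\alpha = h_n \restrict T \restrict \alpha \text{ for all } n < \omega\}$ is stationary.

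The coloring $\vec{c}$ is defined by independently specifying each $c_\alpha$ from the diamond guess at $\alpha$. Enumerate $T_\alpha \times \omega$ as $\seq{\pair{t_m, n_m}}{m < \omega}$ so that every pair recurs infinitely often, and enumerate $l_\alpha$ in increasing order as $\seq{\gamma_m}{m < \omega}$. For each $m$, set
\[
c_\alpha(\gamma_m) :=
\begin{cases}
h_{n_m}^\alpha(t_m \restrict \gamma_m) + 1 & \text{if } t_m \restrict \gamma_m \in \dom(h_{n_m}^\alpha),\\
0 & \text{otherwise.}
\end{cases}
\]

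For the verification, assume toward contradiction that $\pair{\vec{A}, f}$ is a $\sigma$-$T$-uniformization of $\pair{\vec{l}, \vec{c}}$; it induces single-valued partial functions $h_n$ on $T$ via $h_n(s) := c_{\height(t)}(\height(s))$ for any $t \in A_n$ with $s <_T t$ and $\height(s) \in l_{\height(t)} \setminus l_{\height(t), f(t)}$. By $\diamondsuit$, the set $S := \{\alpha : h_n^\alpha = h_n \restrict T \restrict \alpha \text{ for all } n\}$ is stationary. Fix $\alpha \in S$ and $t \in T_\alpha$ with $t \in A_n$ and $f(t) = k$. For each $\gamma \in l_\alpha \setminus l_{\alpha, k}$ the node $t \restrict \gamma$ lies in $\dom(h_n)$ via the witness $t$ itself, with $h_n(t \restrict \gamma) = c_\alpha(\gamma)$; since $\alpha \in S$ this gives $t \restrict \gamma \in \dom(h_n^\alpha)$ and $h_n^\alpha(t \restrict \gamma) = c_\alpha(\gamma)$. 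Pick $m \geq k$ with $\pair{t_m, n_m} = \pair{t, n}$, so that $\gamma_m \geq l_{\alpha, k}$; the construction then yields $c_\alpha(\gamma_m) = h_n^\alpha(t \restrict \gamma_m) + 1 = c_\alpha(\gamma_m) + 1$, a contradiction.

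The main subtlety is the apparent self-reference: $c_\alpha(\gamma_m)$ is defined in terms of the predetermined guess $h_{n_m}^\alpha$, while the $h_n$ whose guess we ultimately need to be correct is derived from $\vec{c}$ itself. This is handled by reading $\diamondsuit$ in its usual form--a guessing sequence that stationarily predicts any subset of $\omega_1$ in the ambient universe--and noting that the construction of $c_\alpha$ uses nothing outside the fixed diamond sequence and $T_\alpha, l_\alpha$; hence $\vec{c}$ (and thus every $h_n$ derived from a uniformization of it) lies in the universe where $\diamondsuit$ holds, so the stationary set $S$ above is available and the diagonalization goes through.
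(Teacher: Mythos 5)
Your proof is correct, and it takes a genuinely different route from the one in the paper. The paper's argument (attributed to Sakai) guesses a single \emph{total} $2$-valued function $d\colon T\to 2$ with an ordinary $\diamondsuit$-sequence, and handles the countably many pieces of a putative $\sigma$-$T$-uniformization by an interleaving trick: it fixes a partition $\omega_1=\bigcup_n X_n$ in advance, builds the ladder system so that $\sup(l_\alpha\cap X_n)=\alpha$ on a club, diagonalizes $c_\alpha$ against $d_\alpha$ separately on each $l_\alpha\cap X_n$, and then combines the uniformizing functions $u_n$ into the single function $u=\bigcup_n u_n\restrict(T\restrict X_n)$ that the $\diamondsuit$-sequence can catch. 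You instead upgrade $\diamondsuit$ to guess the whole countable sequence of \emph{partial} piece-functions $\seq{h_n}{n<\omega}$ at once, and diagonalize against every pair in $T_\alpha\times\omega$ along a single ladder via an infinitely-recurring enumeration. Each approach buys something: the paper gets by with the barest form of $\diamondsuit$ and a $2$-coloring, but its counterexample lives on a purpose-built ladder system tuned to the partition $\seq{X_n}{n<\omega}$; your version works for an \emph{arbitrary} ladder system (a formally stronger conclusion, though not needed for the lemma as stated) at the cost of an $\omega$-valued coloring and a coding step. That coding step is the only place you should be more explicit: the ``level-wise bijections'' must cohere, so fix a single bijection $e\colon T\to\omega_1$ in advance and restrict attention to the club of $\alpha$ with $e[T\restrict\alpha]=\alpha$, on which ordinary $\diamondsuit$ yields your sequence-guessing form. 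This is routine and does not affect correctness.
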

The following proof follows immediately from the one of $\diamondsuit \to\lnot \sigma\mathsf{U}$ that the author was learn from Sakai.
\begin{proof}
Towards a contradiction, we assume both of $\diamondsuit$ and $\sigma\mathsf{U}_T$.
By $\diamondsuit$, there exists a sequecne $\vec{d} = \seq{d_\alpha \colon T\restrict\alpha \to 2}{\alpha \in \omega_1 \cap \limo}$ such that, for each $d \colon T \to 2$, $\{\alpha < \omega_1 \mid d\restrict (T\restrict \alpha) = d_\alpha\}$ is stationary.

Let $\bigcup_{n < \omega}X_n = \omega_1$ be a decomposition to countable many uncountable sets.
Then $C = \{\alpha < \omega_1 \mid \forall n,\, \sup(X_n \cap \alpha) = \alpha\}$ is a club set.
Let $\vec{l}$ be a ladder system such that $\sup(l_\alpha \cap X_n) = \alpha$ for each $\alpha \in C$ and $n < \omega$.
It is easy to construct a coloring $\vec{c}$ of $\vec{l}$ such that 
\begin{align}
\forall \alpha \in C,\,\forall n < \omega,\,\forall t\in T_\alpha,\,\exists^\infty \gamma \in l_\alpha \cap X_n,\, c_\alpha(\gamma) \neq d_\alpha(t\restrict \gamma).
\label{assertion:diamond5}
\end{align}
By $\sigma\mathsf{U}_T$, there exists a decomposition $\bigcup_{n < \omega}A_n = T\restrict\limo$ and functions $u_n \colon T \to 2$ such that 
\begin{align}
\forall n,\, \forall t \in A_n,\, \forall^\infty \gamma \in l_{\height(t)},\, u_n(t\restrict \gamma) = c_\alpha(\gamma).
\label{assertion:diamond6}
\end{align}

We are now ready to derive a contradiction.
Let $u = \bigcup_{n < \omega}u_n \restrict (T\restrict X_n)$.
Since $\vec{d}$ is a  $\diamondsuit$-sequence, there exists $\alpha \in C$ such that $u \restrict (T\restrict \alpha) = d_\alpha$.
Take $n < \omega$ and $t \in A_n$ such that $\height(t) = \alpha$.
Then by \cref{assertion:diamond5,assertion:diamond6}, there exists $\gamma \in l_\alpha \cap X_n$ such that $d_\alpha(t\restrict \gamma) \neq c_\alpha(\gamma) = u_n(t\restrict\gamma)$.
Since $\gamma \in X_n$, $u_n(t\restrict \gamma) = u(t \restrict \gamma)$ and hence $u(t \restrict \gamma) \neq d_\alpha(t \restrict \gamma)$, it contradicts to $u \restrict (T\restrict \alpha) = d_\alpha$
\end{proof}

Since any countable support iteration of $\sigma$-closed forcings preserves $\diamondsuit$, we have the following.
\begin{thm}\label{thm:diamond + stat unif}
$\diamondsuit + \forall E \in \stat,\,\mathsf{U}(\stat_E)$ is consistent relative to $\zfc$, so is $\lnot\sigma\mathsf{U}_T + \forall E \in \stat,\,\mathsf{U}(\stat_E)$.
\end{thm}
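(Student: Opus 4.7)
The plan is to perform a length-$\omega_2$ countable-support iteration of the $\sigma$-closed forcings $\mdl{Q}_E(\vec{l},\vec{c})$ introduced in the previous subsection, over a ground model satisfying $\diamondsuit$ together with $2^{\aleph_1} = \aleph_2$ (e.g.\ take $V = L$). At stage $\alpha < \omega_2$ the bookkeeping picks a $\mdl{P}_\alpha$-name for a pair $\pair{\vec{l},\vec{c}}$ together with a stationary set $E$ in $V^{\mdl{P}_\alpha}$, and sets $\d{\mdl{Q}}_\alpha$ to be the name of $\mdl{Q}_E(\vec{l},\vec{c})$. Since a countable-support iteration of $\sigma$-closed forcings is again $\sigma$-closed (a countable decreasing sequence has countable total support, and one takes lower bounds coordinate-wise), the iteration adds no reals, preserves $\omega_1$ and every stationary subset of $\omega_1$ appearing at any stage, and, as noted in the remark preceding the statement, preserves $\diamondsuit$.

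By the choice of the iterands and the previous lemma, each pair $\pair{\vec{l},\vec{c},E}$ treated at stage $\alpha$ is uniformized on a stationary subset $E'\subset E$ by the $\mdl{P}_\alpha$-generic, and this conclusion is upward absolute because the $\sigma$-closed tail $\mdl{P}_{\omega_2}/\mdl{P}_{\alpha+1}$ preserves stationarity of $E'$ and adds no new coloring data on $E'$. Using $2^{\aleph_1}=\aleph_2$ in $V$, a standard reflection/bookkeeping argument ensures that every pair $\pair{\vec{l},\vec{c},E}$ occurring in $V^{\mdl{P}_{\omega_2}}$ already appears as some $\mdl{P}_\alpha$-name for $\alpha < \omega_2$ and is thus handled. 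Consequently $V^{\mdl{P}_{\omega_2}} \models \diamondsuit + \forall E\in\stat,\,\mathsf{U}(\stat_E)$, giving the first consistency statement.

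For the second statement, work inside $V^{\mdl{P}_{\omega_2}}$ and fix any Aronszajn tree $T$ (for instance one constructed from the preserved $\diamondsuit$). The immediately preceding lemma gives $\diamondsuit \to \lnot\sigma\mathsf{U}_T$, so $\lnot\sigma\mathsf{U}_T$ holds in $V^{\mdl{P}_{\omega_2}}$. Combined with the previous paragraph, this yields the consistency of $\lnot\sigma\mathsf{U}_T + \forall E \in \stat,\,\mathsf{U}(\stat_E)$ relative to $\zfc$.

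The principal technical points are (i) that countable-support $\sigma$-closed iteration preserves $\diamondsuit$, cited as a black box, and (ii) the bookkeeping, whose only subtlety is that new ladder system colorings and new stationary sets may appear at intermediate stages, so one must enumerate names rather than ground-model objects; this is standard given $2^{\aleph_1}=\aleph_2$ and a length-$\omega_2$ iteration. Preservation of stationarity along the tail is automatic from $\sigma$-closedness, so no separate preservation theorem (e.g.\ in the style of Lemma~\ref{lem:stat non-unif pres}) is needed for this direction.
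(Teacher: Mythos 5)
Your proposal is correct and follows essentially the same route as the paper: iterate the posets $\mdl{Q}_E(\vec{l},\vec{c})$ with countable support and bookkeeping over a model of $\diamondsuit$, invoke the (black-box) preservation of $\diamondsuit$ under countable-support iterations of $\sigma$-closed forcings, and then apply the lemma that $\diamondsuit$ implies $\lnot\sigma\mathsf{U}_T$ in the final model. The additional details you supply (the $\aleph_2$-cc/reflection argument for the bookkeeping, preservation of stationarity and upward absoluteness of uniformization along the $\sigma$-closed tail) are correct and merely make explicit what the paper leaves implicit.
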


\begin{lem}\label{lem:nonsigma pres}
Let $T$ be an Aronszajn tree.
Suppose that $\pair{\vec{l},\vec{c}}$ is not $\sigma$-$T$-uniformizable.
Then any $\aleph_1$-$\sigma$-linked forcing forces that $\pair{\vec{l},\vec{c}}$ is not $\sigma$-$T$-uniformizable.
\end{lem}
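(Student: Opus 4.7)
The plan is to prove the contrapositive, following the same template as \cref{al1 sigl pres nonsp}. Suppose $\mdl{P}$ is $\aleph_1$-$\sigma$-linked and some $p^*\in \mdl{P}$ forces that $\pair{\vec{l},\vec{c}}$ is $\sigma$-$T$-uniformizable, witnessed by $\mdl{P}$-names $\d{f}$, $\seq{\d{A}_n}{n<\omega}$, and $\seq{\d{u}_n}{n<\omega}$ with $p^* \forces{\d{f}\colon T\restrict\limo\to\omega}$, $p^* \forces{T\restrict\limo = \bigsqcup_{n<\omega}\d{A}_n}$, and $p^*$ forcing each $\d{u}_n$ to witness uniformization on $\d{A}_n$. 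The aim is to extract ground-model objects witnessing $\sigma$-$T$-uniformizability of $\pair{\vec{l},\vec{c}}$.

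First I would, for each $t\in T\restrict\limo$, pick $p_t \leq p^*$ together with integers $f(t),n(t)<\omega$ such that $p_t \forces{\d{f}(t)=f(t) \land t\in\d{A}_{n(t)}}$. Since $T$ is Aronszajn, $|T\restrict\limo|=\aleph_1$, so the sequence $\seq{p_t}{t\in T\restrict\limo}$ can be enumerated in length $\aleph_1$; applying $\aleph_1$-$\sigl$-ness, one obtains a countable cover $\{p_t \mid t\in T\restrict\limo\} = \bigcup_{m<\omega}\Lambda_m$ by linked sets. Define the countable partition $B_{m,n} := \{t \in T\restrict\limo \mid p_t\in\Lambda_m \land n(t)=n\}$ of $T\restrict\limo$. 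The claim is that $\seq{B_{m,n}}{m,n<\omega}$ together with $f\colon T\restrict\limo \to \omega$ already witnesses $\sigma$-$T$-uniformizability in the ground model.

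To verify this, fix $m,n<\omega$ and $t,t'\in B_{m,n}$ of heights $\alpha,\alpha'$, and suppose $\gamma \in l_\alpha^{f(t)}\cap l_{\alpha'}^{f(t')}$ with $t\restrict\gamma = t'\restrict\gamma$; the task is to show $c_\alpha(\gamma)=c_{\alpha'}(\gamma)$. Since $p_t,p_{t'}\in\Lambda_m$ are compatible, pick a common extension $q$. Then $q$ forces $t,t'\in\d{A}_n$, $\d{f}(t)=f(t)$, $\d{f}(t')=f(t')$, so $q$ forces $\d{u}_n(t\restrict\gamma)=c_\alpha(\gamma)$ and $\d{u}_n(t'\restrict\gamma)=c_{\alpha'}(\gamma)$. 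Since $t\restrict\gamma = t'\restrict\gamma$ and the values $c_\alpha(\gamma),c_{\alpha'}(\gamma)$ are absolute ground-model integers, they must agree. This shows $\bigcup_{t\in B_{m,n}} c^T_\alpha\restrict [t\restrict l_{\height(t),f(t)},t)$ is a function, contradicting the assumption on $\pair{\vec{l},\vec{c}}$.

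The one genuine input is $|T\restrict\limo|=\aleph_1$, which is precisely what lets us apply $\aleph_1$-$\sigl$-ness to the family $\{p_t\}$; once this is available, the compatibility argument is entirely routine and reads off the required agreement of $c_\alpha(\gamma)$ and $c_{\alpha'}(\gamma)$ directly from any common extension in $\Lambda_m$. No delicate absoluteness or name-arithmetic is needed beyond this.
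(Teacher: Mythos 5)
Your proposal is correct and is essentially the paper's own argument: decide $\d{f}(t)$ and the index of the $\d{A}_n$ containing $t$ below each $p_t$, use $\aleph_1$-$\sigma$-linkedness of the $\aleph_1$-sized family $\{p_t \mid t \in T\restrict\limo\}$ to get linked pieces $\Lambda_m$, form the countable decomposition $B_{m,n}$, and read off agreement of colors at a common node from a common extension of $p_t,p_{t'}$. The only difference is presentational (you argue the contrapositive and verify each $B_{m,n}$ directly, while the paper runs the same computation as a proof by contradiction), so no further comment is needed.
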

% Note that any $\aleph_1$-$\sigma$-linked forcing poset is $\xknas{\stat}$, so it preserves also $\mathsf{U}(\stat_E)$ for a fixed stationary set $E \in \stat$ and $\forall E \in \stat,\,\mathsf{U}(\stat_E)$ by \cref{lem:stat non-unif pres}.
\begin{proof}
%We shall only show the preservation of $\lnot \sigma\mathsf{U}_T$.
Let $\mdl{P}$ be an $\aleph_1$-$\sigma$-linked forcing and $\pair{\vec{l},\vec{c}}$ be a non-$\sigma$-$T$-uniformizableladder system coloring.
Towards a contradiction, we assume that $p \in \mdl{P}$ forces that
$T\restrict \limo = \bigcup_{n < \omega}\d{A}_n$ and $\d{f} \colon T\restrict \limo \to \omega$ satisfies $\forall n < \omega,\,\{\pair{t\restrict l_{\alpha,k} ,c_\alpha(l_{\alpha,k})} \mid t \in \d{A}_n,\,\height(t) = \alpha,\,\d{f}(t)\leq k\} = \bigcup_{t \in \d{A}_n} c^T_{\height(t)}\restrict [t\restrict l_{\height(t),\d{f}(t)},t)$ is a function.
Fix a sequence $\seq{p_t \leq p,\, n_t < \omega,\,k_t < \omega}{t \in T}$ such that 
\[p_t\forcestate[\mdl{P}]{$t \in \d{A}_{n_t}$ and $\d{f}(t) = \c{k}_t$}.\]
%and $\exists q \leq p_t,\, q \forces{t\in \d{S}_{n_t}} \implies p_t \forces{t\in \d{S}_{n_t}}$.

Since $\mdl{P}$ is $\aleph_1$-$\sigma$-linked and $|T| = \aleph_1$ , there exists a cover $\bigcup_{m < \omega}K_m = T$ such that each $\{p_t \mid t \in K_m\}$ is linked.
Let $B_{m,n} = \{t \in T \mid m_t = m,\,n_t = n\}$.

Since $\pair{\vec{l},\vec{c}}$ is not $\sigma$-$T$-unoformizable, there exists $m,n,k < \omega$ such that $\pair{\vec{l},\vec{c}}$ is not uniformized on $B_{n,m}$.
In particular, the function $t\mapsto k_t$ does not uniformize $\pair{\vec{l},\vec{c}}$.
Thus there exists $s,t \in B_{n,m}$ and $k \geq \max(k_s,k_t)$ such that $l_{\alpha,k} = l_{\beta,k} = \gamma$, $s\restrict \gamma = t \restrict \gamma$ and $c_\alpha(\gamma) \neq c_\beta(\gamma)$.
On the other hand, a common extension of $p_t$ and $p_s$ forces not, it is a contradiction.
\end{proof}

\begin{lem}
Assume that $\ma_{\aleph_1}(\xpc{\stat})$ holds and $T$ is an SS Aronszajn tree.
Then every SS-uniformizable ladder system coloring $\pair{\vec{l},\vec{c}}$ is full-$T$-uniformized.
\end{lem}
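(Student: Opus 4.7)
The plan is to construct a forcing $\mdl{R} = \mdl{R}(T,\vec{l},\vec{c})$ which is $\xpc{\stat}$ and generically adjoins a full-$T$-uniformization of $\pair{\vec{l},\vec{c}}$; then $\ma_{\aleph_1}(\xpc{\stat})$ applied to $\aleph_1$ natural dense sets will extract the desired $u\colon T\to \omega$.

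Take conditions of $\mdl{R}$ to be pairs $q=(p^q,\Pi^q)$, where $p^q\in\fn(T,\omega)$ is a finite piece of the prospective uniformization and $\Pi^q$ is a finite partial function from $\{(\alpha,s)\colon \alpha\in\omega_1\cap\limo,\ s\in T_\alpha\}$ into $\omega$, encoding the promise ``$u(s\restrict\gamma)=c_\alpha(\gamma)$ for every $\gamma\in l_\alpha^{\Pi^q(\alpha,s)}$''. The coherence demands that $p^q$ satisfy every such promise it intersects and that any two promises whose branches share a common ancestor $t$ with $\height(t)\in l_{\alpha_1}^{n_1}\cap l_{\alpha_2}^{n_2}$ agree on the corresponding $c$-values at $\height(t)$. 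Order $\mdl{R}$ by the natural strengthening: $p^{q'}\supseteq p^q$, $\dom(\Pi^{q'})\supseteq \dom(\Pi^q)$, with cutoffs possibly decreasing.

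For each $t_0\in T$ and each $(\alpha_0,s_0)$ with $s_0\in T_{\alpha_0}$, the sets $\{q\colon t_0\in\dom(p^q)\}$ and $\{q\colon (\alpha_0,s_0)\in\dom(\Pi^q)\}$ are dense: the former by global coherence (the promises impose at most one consistent value on $p(t_0)$); the latter by choosing the cutoff $n$ large enough to surpass all heights in $\dom(p^q)$ and the (finitely many) meet-heights of $s_0$ with the branches already appearing in $\Pi^q$, each of which is bounded since $s_0\ne s'$ whenever the levels differ. A generic filter then produces $u\colon T\to\omega$ and, for each $\alpha$ and each $s\in T_\alpha$, a generic promise forcing $u(s\restrict\gamma)=c_\alpha(\gamma)$ on all but finitely many $\gamma\in l_\alpha$; hence $u$ is a full-$T$-uniformization.

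The crux is verifying that $\mdl{R}$ is $\xpc{\stat}$. Given $\langle q_\alpha\colon \alpha\in S\rangle$ with $S\in\stat$: (i) a standard $\Delta$-system refinement yields $S_1\in\stat_S$ with common roots $(p^\star,\Pi^\star)$ and ``new parts'' lying above level $\alpha$; (ii) \cref{lem:SS tree antichain refining}, applied to the finite family consisting of the new $p^{q_\alpha}$-nodes together with the $s$-components of the new $\Pi^{q_\alpha}$-entries, refines to $S_2\in\stat_{S_1}$ on which that union forms a single antichain, forcing every cross-condition meet-height into the fixed root; (iii) SS-uniformizability of $\pair{\vec{l},\vec{c}}$ supplies a stationary $E'\subset\omega_1\cap\limo$ and $k_0\colon E'\to\omega$ realizing an ordinary uniformization on $E'$, and a further refinement to $S_3\in\stat_{S_2}$ arranges that every new promise parameter of each $q_\alpha$ lies in $E'$ with cutoff dominating $k_0$. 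For $\alpha,\beta\in S_3$ the union $q_\alpha\cup q_\beta$ is then coherent: the antichain structure eliminates all cross-branch promise interactions outside the root, while ordinary uniformization on $E'$ secures the residual promise-to-promise $c$-agreement.

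The hard part is step (iii): the cutoffs in the given $q_\alpha$'s are fixed by the input and need not dominate $k_0$ on their new parameters. Resolving this will likely require either building $k_0$-domination into the very definition of $\mdl{R}$ (admitting only promises whose cutoffs exceed a pre-chosen $k_0$ on a fixed $E'$ supplied by SS-uniformizability), or a finer pigeonhole argument extracting stationarily many $q_\alpha$ that already comply. The delicate coupling of the SS property of $T$ (controlling the shape of the new branch nodes) with the SS-uniformizability of $\pair{\vec{l},\vec{c}}$ (controlling the $c$-values on overlapping ladder tails) is the technical heart of the proof.
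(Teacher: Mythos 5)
Your poset $\mdl{R}$ is, up to presentation, the paper's $\mathbb{P}_T(\vec{l},\vec{c})$ (the paper keeps only the cutoff/promise component and reads the working part off the promises), and your three-stage refinement --- $\Delta$-system, \cref{lem:SS tree antichain refining} to kill cross-branch meets, SS-uniformizability for the residual tail agreement --- is exactly the paper's plan. So the architecture is right, but the proof is not complete: the step you yourself flag as ``the hard part'' is a genuine gap, and it is precisely where the paper does its real work.

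The issue is the following. After SS-uniformizability supplies a stationary $E'$ inside the index set on which the coloring is uniformized, say with constant cutoff $n_f$ after a further refinement, a promise attached to $s\in T_\alpha$ whose given cutoff $n$ is smaller than $n_f$ still constrains $u$ at the finitely many nodes $s\restrict l_{\alpha,j}$ for $n\leq j<n_f$, and these constraints are invisible to the uniformization. You cannot shrink the index set to make the given cutoffs dominate $n_f$, and building domination into $\mdl{R}$ does not rescue the argument either, because the promise parameters occurring at levels strictly above $\alpha$ in the new part of $q_\alpha$ cannot be steered into $E'$ by refining the index set: each condition drags in its own finite set of alien levels. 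The paper's resolution is the ``finer pigeonhole'' you guessed at: the below-threshold excess of each condition's level-$\alpha$ promises is a finite partial function into $\omega$ with domain in $T\restrict\alpha$, so Fodor's lemma fixes it to one finite function $d$ on a stationary subset; two promises then agree below the threshold because their excess parts are literally equal, and above it by the uniformization on $E'$. A parallel pressing-down fixes the below-$\alpha$ content of the promises attached to nodes above level $\alpha$ to a single $c^+$, and the antichain obtained from \cref{lem:SS tree antichain refining} guarantees that those promises have no interaction at or above level $\min(\alpha,\beta)$ across distinct conditions. Without these two pressing-down steps your step (iii) does not close, so they must be added before you can conclude that $q_\alpha$ and $q_\beta$ are compatible for $\alpha,\beta$ in the final stationary set.
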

\begin{proof}
% In this proof, for a partical function $r \colon \omega_1 \to \omega$, define $r^T(t) = r(\height(t))$ for $t \in T$.
Let $\mathbb{P}_T(\vec{l},\vec{c})$ be the forcing poset of all finite partial functions $p \colon T\restrict \limo \to \omega$ such that $c^p := \bigcup_{t \in \dom (p)} c^T_\alpha \restrict [t \restrict l_{\alpha,p(t)}, t)$ %:= \{\pair{t \restrict\gamma ,c_{\height(t)}(\gamma)} \mid t \in \dom(p),\,\gamma \in l_{\height(t)} \setminus l_{\height(t)}(p(t))\}$
  is a function.
The order is defined by reverse inclusion.
It is clear that this forcing provides a full-$T$-uniformizing function.

We shall show that $\mdl{P}_T(\vec{l},\vec{c})$ is $\xpc{\stat}$.
So let $E \subset \omega_1$ be a stationary set and $\seq{p_\alpha }{\alpha \in E}$ an $E$-indexed sequence of conditions.
Let $p^-_\alpha = p_\alpha \restrict (T\restrict \alpha)$, $p^0_\alpha = p_\alpha \restrict (T_\alpha)$, and $p^+_\alpha = p_\alpha \restrict (T\restrict (\omega_1 \setminus \alpha))$.
Furthermore, $x^*_\alpha = \dom(p^*_\alpha)$ for $* \in \{-,0,+\}$ and $x_\alpha = x^-_\alpha \cup x^0_\alpha \cup x^+_\alpha$.
We may assume that
\begin{itemize}
\item each $p^-_\alpha$ is fixed to $p^-$
\item $\forall \xi \in E\cap \alpha,\, x_\xi \subset T\restrict \alpha$.
\item $\max\{p_\alpha(t)\mid t\in x_\alpha\}$ is fixed to $m_0$,
\item $\max\{l_{\height(t)}(m_0) \mid t \in x_\alpha \}$ is fixed to $\delta_0$,
\item $\bigcup_{\alpha \in E}c_\alpha\restrict (l_\alpha \setminus \gamma_f)$ is a function where $\gamma_f = l_{\alpha,n_f}$(using the SS property of $\pair{\vec{l},\vec{c}}$)
\item $c^{p^0_\alpha}\restrict (T \restrict (\gamma_f \cap l_\alpha))$ is fixed to $d$,
\begin{itemize}
\item at this step, we already have $\{p^-_\alpha \cup p^0_\alpha \mid \alpha \in E\}$ is centered.
\end{itemize}
\item $\bigcup_{t \in x^+_\alpha}c_{\height(t)} \restrict \alpha$ is fixed to $c^+$ and $\alpha_c = \sup (\dom(c))$,
\item $\bigcup_{\alpha \in E}(x^0_\alpha \cup \{t\restrict \alpha \mid t \in x^+_\alpha \})$ is an antichain by \cref{lem:SS tree antichain refining}.
\end{itemize} 

Then, for a finite set $\Gamma \subset E$, we have 
\begin{align*}
\bigcup_{\alpha \in \Gamma} c^{p_\alpha} &= \bigcup_{\alpha \in \Gamma} c^{p^-_\alpha}  \cup c^{p^0_\alpha} \cup c^{p^+_\alpha} 
\\&= c^{p^-} \cup \bigcup_{\alpha \in \Gamma} c^{p^0_\alpha} \restrict( T\restrict (l_\alpha \setminus \gamma_f)) \cup c^{p^0_\alpha} \restrict(T \restrict  (l_\alpha \cap \gamma_f)) \cup c^{p^+_\alpha} \restrict \alpha \cup c^{p^+_\alpha} \restrict (\omega_1 \setminus \alpha)\\
&= c^{p^-} \cup \bigcup_{\alpha \in \Gamma} c^{p^0_\alpha} \restrict( T\restrict (l_\alpha \setminus \gamma_f)) \cup d \cup c^+\cup c^{p^+_\alpha} \restrict (T\restrict (\omega_1 \setminus \alpha) )\\
&= c^{p^-} \cup d \cup c^+ \cup \bigcup_{\alpha \in \Gamma} c^{p^0_\alpha} \restrict( T\restrict (l_\alpha \setminus \gamma_f)) \cup c^{p^+_\alpha} \restrict (T\restrict (\omega_1 \setminus \alpha) )
\end{align*}
is a function:
For members $\pair{t,n}$ and $\pair{t,m}$ of the above set, We consider the following three cases.
\begin{description}
\item[Both $\pair{t,n}$ and $\pair{t,m}$ come from same $\alpha$:] 
Ofcourse $n = m$ because $p_\alpha$ is a condition.
\item[$\pair{t,n} \in c^{p^0_\alpha} \restrict (T\restrict (l_\alpha \setminus \gamma_f))$ and $\pair{t,m} \in c^{p^0_\beta} \restrict  (T\restrict (l_\beta \setminus \gamma_f))$:] 
Since $\bigcup_{\alpha \in E} c_\alpha \restrict (l_\alpha \setminus \gamma_f)$ is a function, $m = n$
\item[$\pair{t,n} \in c^{p^+_\alpha} \restrict (T\restrict (\omega_1 \setminus \alpha) )$ and $\pair{t,m}$ comes from a piece indexed by other $\beta$:] 
This case does not occurs since $\bigcup_{\xi \in E}x^0_\xi \cup  \{t\restrict \xi \mid t \in x^+_\xi \}$ is an antichain.
\end{description}
The above cases are collectively exhaustive.
% Extending each conditions if neccessary, we may assume that $p_\alpha = p'_\alpha \cup \{\pair{\alpha,n_\alpha}\}$.
% As we have seen in \cref{ULC(E) is statEc}, we may assume that: for every $\alpha \in E$, $p'_\alpha\restrict \alpha = p_{\mathsf{fixed}}$, $\forall \xi \in E \cap \alpha,\, \dom(p'_\xi) \subset \alpha$, and $c_\alpha^{p'_\alpha \restrict (\omega_1 \setminus \alpha)}\restrict \alpha = c_{\mathsf{fixed}}$.
% %We may assume that $l_\alpha(\nu) > $

% On the other hand, since the given ladder system coloring is SS-uniformizable, there exists a stationary set $E' \subset E$ and  a uniformizing function $f \colon E' \to \omega$, that is,  $\bigcup_{\alpha \in E'}c_\alpha \restrict l_\alpha(f(\alpha))$ is a function.
% By the pressing down lemma, we may assume that $\forall \alpha \in E',\, f(\alpha) = m \land n_\alpha = n$.
% If $n < m$ unfortunately, then we may assume that $\forall k \in [n,m),\,\forall \alpha \in E',\,l_\alpha(k) = l(k) \land c_\alpha(l(k)) = c(k)$.
% Thus $\pair{\vec{l},\vec{c}}\restrict E'$ is uniformized by the function $\alpha \mapsto n_\alpha (= n)$.

% After all, $\seq{p_\alpha}{\alpha \in E'}$ is centered:
% For each finite $\Gamma \subset E'$, 
% \[\bigcup_{\alpha \in \Gamma}c^{p_\alpha} = c^{p'_{\mathsf{fixed}}} \cup c_{\mathsf{fixed}} \cup \bigcup_{\alpha \in \Gamma}c_\alpha\restrict (l_\alpha \setminus l_\alpha(n))\]
\end{proof}

We are now ready to derive the separation of $\ma_{\aleph_1}(\sigl)$ from $\ma_{\aleph_1}(\xpc{\stat})$.
\begin{proof}[Proof of \cref{thm:distinguish sigl and statpc}: $\ma_{\aleph_1}(\sigl)$ does not imply $\ma_{\aleph_1}(\xpc{\stat})$]
By \cref{thm:diamond + stat unif}, we have a model of $\lnot\sigma\mathsf{U}_T + \forall E \in \stat,\,\mathsf{U}(\stat_E)$.
Iterating $\aleph_1$-$\sigma$-linked forcings, we can force $\ma_{\aleph_1}(\sigl)$ and $\lnot\sigma\mathsf{U}_T$ simulatenously.
Furthermore, by \cref{lem:nonsigma pres,lem:SS pres}, any SS uniformizable  ladder system coloring that witnesses $\lnot \sigma\mathsf{U}_T$ in the ground model remains a ladder system coloring with the same propertyin the extension model.
%Likewise, by \cref{lem:nonsigma pres}, any witness of $\lnot \sigma\mathsf{U}_T$ in the ground model remains a witness of 

On the other hand, $\ma_{\aleph_1}(\xpc{\stat})$ implies that any SS uniformizable ladder system coloring is full-$T$-uniformizable for any SS Aronszajn tree $T$ (such $T$ always exists, since any spceial Aronszajn tree is SS).
Thus any ladder system coloring in the ground model witnesses the failure of $\ma_{\aleph_1}(\xpc{\stat})$.
\end{proof}

\bibliographystyle{plain} %参考文献出力スタイル
\bibliography{bib} %hoge.bibから拡張子を外した名前

\end{document}